\newcommand{\onen}{{\mathbf 1}_{n}}
\newcommand{\onenn}[1]{{\mathbf 1}_{#1}}
\newcommand\rE{{\sf{E}}}
\newcommand\rF{{\sf{F}}}
\theoremstyle{definition}
\newtheorem{thm}{Theorem}[section]
\newtheorem{cor}[thm]{Corollary}
\newtheorem{lem}[thm]{Lemma}
\newtheorem{rem}[thm]{Remark}
\newtheorem{prop}[thm]{Proposition}
\newtheorem{defn}[thm]{Definition}
\newcommand{\Uq}{{\bf U}_q(\mathfrak{sl}_2)}
\newcommand{\Ucat}{\cal{U}}
\newcommand{\UcatD}{\dot{\cal{U}}}
\newcommand{\UA}{{_{\cal{A}}\dot{{\bf U}}}}
\newcommand{\maps}{\colon\thinspace}
\newcommand{\bins}[2]{
\left(
 \begin{array}{c}
 \scs #1 \\
 \scs #2 \\
 \end{array}
 \right)
}
\newcommand{\Hom}{{\rm Hom}}
\def\mf{\mathfrak}
\numberwithin{equation}{section}
\let\hat=\widehat
\let\tilde=\widetilde
\let\theta=\vartheta
\let\epsilon=\varepsilon
\def\C{{\mathbbm C}}
\def\Z{{\mathbbm Z}}
\def\Q{{\mathbbm Q}}
\def\cal#1{\mathcal{#1}}%
\def\1{\mathbbm{1}}%
\def\tr{\mathrm{tr}}%
\def\nn{\notag}
\def\la{\langle}
\def\ra{\rangle}
\newcommand{\ccbub}[1]{
\xybox{%
 (-6,0)*{};
  (6,0)*{};
  (-4,0)*{}="t1";
  (4,0)*{}="t2";
  "t2";"t1" **\crv{(4,6) & (-4,6)};
   ?(1)*\dir{>};
  "t2";"t1" **\crv{(4,-6) & (-4,-6)};
   ?(.3)*\dir{}+(0,0)*{\bullet}+(0,-3)*{\scs {#1}};
}}
\newcommand{\cbub}[1]{
\xybox{%
 (-6,0)*{};
  (6,0)*{};
  (-4,0)*{}="t1";
  (4,0)*{}="t2";
  "t2";"t1" **\crv{(4,6) & (-4,6)};
    ?(.95)*\dir{<};
  "t2";"t1" **\crv{(4,-6) & (-4,-6)};
   ?(.3)*\dir{}+(0,0)*{\bullet}+(0,-3)*{\scs {#1}};
}}
\newcommand{\bbpef}{\xybox{%
  (-6,0)*{};
  (6,0)*{};
  (-4,0)*{}="t1";
  (4,0)*{}="t2";
  "t1";"t2" **\crv{(-4,-6) & (4,-6)}; ?(.15)*\dir{>} ?(.9)*\dir{>};
}}
\newcommand{\bbpfe}{\xybox{%
  (-6,0)*{};
  (6,0)*{};
  (-4,0)*{}="t1";
  (4,0)*{}="t2";
  "t2";"t1" **\crv{(4,-6) & (-4,-6)}; ?(.15)*\dir{>} ?(.9)*\dir{>};
}}
\newcommand{\bbcfe}[1]{\xybox{%
  (-6,0)*{};
  (6,0)*{};
  (-4,0)*{}="t1";
  (4,0)*{}="t2";
  "t1";"t2" **\crv{(-4,6) & (4,6)}; ?(.15)*\dir{>} ?(.9)*\dir{>}
  ?(.5)*\dir{}+(0,2)*{\scriptstyle{#1}};
}}
\newcommand{\bbcef}[1]{\xybox{%
  (-6,0)*{};
  (6,0)*{};
  (-4,0)*{}="t1";
  (4,0)*{}="t2";
  "t2";"t1" **\crv{(4,6) & (-4,6)}; ?(.15)*\dir{>}
  ?(.9)*\dir{>} ?(.5)*\dir{}+(0,2)*{\scriptstyle{#1}};
}}
\newcommand{\lowrru}[1]{\xybox{%
  (-8,0)*{};
  (8,0)*{};
  (-6,-18)*{};(6,-9)*{} **\crv{(-6,-13) & (6,-15)} ?(1)*\dir{>};
  (6,-9)*{};(6,0)*{}  **\dir{-} ?(.3)*\dir{ }+(2,0)*{\scs {\bf j}};
}}
\newcommand{\lowllu}[1]{\xybox{%
  (-8,0)*{};
  (8,0)*{};
  (6,-18)*{};(-6,-9)*{} **\crv{(6,-13) & (-6,-15)} ?(1)*\dir{>};
  (-6,-9)*{};(-6,0)*{}  **\dir{-} ?(.3)*\dir{ }+(-2,0)*{\scs {\bf j}};
}}
\newcommand{\bbdl}[1]{\xybox{%
  (2,0);(0,-8) **\crv{(2,-2)&(0,-6)}; ?(.5)*\dir{>}
}}
\newcommand{\bbdlu}[1]{\xybox{%
  (2,0);(0,-8) **\crv{(2,-2)&(0,-6)}; ?(.5)*\dir{<}
}}
\newcommand{\bbdr}[1]{\xybox{%
  (-2,0);(0,-8) **\crv{(-2,-2)&(0,-6)}; ?(.5)*\dir{>}
}}
\newcommand{\bbdru}[1]{\xybox{%
  (-2,0);(0,-8) **\crv{(-2,-2)&(0,-6)}; ?(.5)*\dir{<}
}}
\def\cal#1{\mathcal{#1}}
\def \Z {\mathbbm{Z}}
\def \Q {\mathbbm{Q}}
\def \E {\mathcal{E}}
\def \F {\mathcal{F}}
\def \U {\mathcal{U}}
\def \B {\mathcal{B}}
\def \C {\mathcal{C}}
\def \Tr{\operatorname{Tr}}
\def \Span{\operatorname{Span}}
\def \Ob{\operatorname{Ob}}
\def \Set{\mathbf{Set}}
\def \Cat{\mathbf{Cat}}
\def \HH{\operatorname{HH}}
\def \min {{\rm min}}
\def \Id {{\rm Id}}
\def \i {{\rm i}}
\newcommand{\xto}[1]{{\overset{#1}{\longrightarrow}}}
\newcommand{\bigb}[1]{
\begin{pspicture}(0,0)
 \rput(0,0){\psframebox[framearc=.5,fillstyle=solid]{\small $#1$}}
\end{pspicture}}
\newcommand\nc{\newcommand}
\nc\rnc{\renewcommand}
\nc\Kar{\operatorname{Kar}}
\nc\End{\operatorname{End}}
\nc\modQ {{\mathbb Q}}
\nc\modZ {{\mathbb Z}}
\nc\simeqto{\overset{\simeq}{\longrightarrow }}
\nc\modC {{\mathcal C}}
\nc\modD {{\mathcal D}}
\nc\K{\mathcal {K}}
\nc\CC{\mathbf{C}}
\nc\tP{\tilde{P}}
\nc\tl{\tilde{\lambda}}
\nc\tm{\tilde{\mu}}
\newcommand{\scs}{\scriptstyle}
\newcommand{\lbub}{\xybox{
  (-3,0)*{};(3,0)*{} **\crv{(-3,4) & (3,4)} ?(.0)*\dir{>};
  (3,0)*{};(-3,0)*{} **\crv{(3,-4) & (-3,-4)} ?(.1)*{*};
  (-5,-5)*{}; (5,5)*{};
}}
\newcommand{\rbub}{\xybox{
  (-3,0)*{};(3,0)*{} **\crv{(-3,4) & (3,4)} ?(.0)*\dir{<};
  (3,0)*{};(-3,0)*{} **\crv{(3,-4) & (-3,-4)} ?(.1)*{*};
  (-5,-5)*{}; (5,5)*{};
}}
\newcommand{\lbbub}{\xybox{
  (-3,0)*{};(3,0)*{} **\crv{(-3,4) & (3,4)} ?(.0)*\dir{>};
  (3,0)*{};(-3,0)*{} **\crv{(3,-4) & (-3,-4)} ?(.1)*{\bullet};
  (-5,-5)*{}; (5,5)*{};
}}
\newcommand{\rbbub}{\xybox{
  (-3,0)*{};(3,0)*{} **\crv{(-3,4) & (3,4)} ?(.0)*\dir{<};
  (3,0)*{};(-3,0)*{} **\crv{(3,-4) & (-3,-4)} ?(.1)*{\bullet};
  (-5,-5)*{}; (5,5)*{};
}}
\nc\Sym{\operatorname{Sym}}
\nc\one{\mathbf{1}}
\nc\ct{\underset{\cong}{\longrightarrow}}
\nc\calU{\mathcal{U}}
\nc\cU{\calU}
\nc\col{\colon\thinspace}
\nc\calA{\mathcal{A}}
\nc\Ab{\mathbf{Ab}}
\nc\Ko{K_0}
\nc\TrhorCC{\Tr^{\mathrm{hor}}(\CC)}
\nc\AdCat{\mathbf{AdCat}}
\nc\TrCC{\Tr(\CC)}
\nc\Udot{\dot{\mathcal{U}}}
\nc\diag{\mathrm{d}}
\nc\modU {\mathcal{U}}
\nc\bfU{\mathbf{U}}
\nc\dU{\dot{\mathbf U}}
\nc\dUZ{{_\modZ\dot{\mathbf U}}}
\nc\UZ{{_\modZ \mathbf U} }
\nc\fsl{\mathfrak{sl}}
\nc\Uaa{{\bf U} (\mathfrak{sl}_2\otimes \Q[t,t^{-1}])}
\nc\UZslt{{_\modZ\mathbf{U}} \LL}
\nc\UdZslt{{_\modZ\dot{\mathbf{U}}}\LL}
\nc\LL{(\fsl_2[t])}
\nc\UL{\mathbf U\LL}
\nc\UZL{\UZ\LL}
\nc\dUZL{\dUZ\LL}
\nc\dUL{\dU\LL}
\nc\on{\onen}
\newcommand{\elem}{e}
\newcommand{\idemp}{i}
\nc\tG{\tilde{G}} \nc\tE{\tilde{E}}
\nc\yto[1]{\underset{#1}{\to}}
\nc\Ear{\yto{E}}
\nc\dotU{\dot{\mathcal{U}}}
\begin{document}

\title{Trace decategorification of categorified quantum $\mathfrak{sl}_2$}
\date{April 7, 2014}
\author{Anna Beliakova}
\address{Universit\"at Z\"urich, Winterthurerstr. 190
CH-8057 Z\"urich, Switzerland}
\email{anna@math.uzh.ch}

\author{Kazuo Habiro}
\address{Research Institute for Mathematical Sciences, Kyoto University, Kyoto, 606-8502, Japan}
\email{habiro@kurims.kyoto-u.ac.jp}

\author{Aaron D.~Lauda}
\address{University of Southern California, Los Angeles, CA 90089, USA}
\email{lauda@usc.edu}

\author{Marko \v Zivkovi\' c}
\address{Universit\"at Z\"urich, Winterthurerstr. 190
CH-8057 Z\"urich, Switzerland}
\email{marko.zivkovic@math.uzh.ch}

\begin{abstract}
  The trace or the $0$th Hochschild--Mitchell homology of a linear
  category $\mathcal{C}$ may be regarded as a kind of decategorification of $\mathcal{C}$.
  We compute the traces of the two versions $\UcatD$ and $\U^*$ of
  categorified quantum $\mathfrak{sl}_2$ introduced by the third
  author.  The trace of $\Ucat$ is isomorphic to the split
  Grothendieck group $K_0(\UcatD)$, and the higher
  Hochschild--Mitchell homology of $\UcatD$ is zero.  The trace of
  $\U^*$ is isomorphic to the idempotented integral form of the
  current algebra $\bfU(\mathfrak{sl}_2[t])$.
\end{abstract}

\maketitle

\section{Introduction}

Categorification is a process of transforming a set-like structure
into a category-like structure, which is inverse to
decategorification, usually defined as a functor from category-like
structures to set-like structures.  For example, the functor from the
category of small categories to the category of sets,
mapping a category $\modC $ to the set $\Ob(\modC )/\cong$ of
isomorphism classes of objects in $\modC $, is considered to be a
decategorification.  In this example, the categorification problem is
to find, for a given set $S$, a nice category $\modC $ and a bijection
between $\Ob(\modC )/\cong$ and $S$.  The notion of niceness depends
on the circumstances, and categorification is not a well-defined
notion in general.  Usually, a decategorification functor maps
categories with structures, such as monoidal categories and
$2$-categories, to sets with structures, such as monoids and
categories.  The corresponding categorification problem for a monoid
$M$ is to find a nice monoidal category $\modC $ with the monoid
$\Ob(\modC )/\cong$ isomorphic to $M$.

Another example of decategorification functor is the Grothendieck
group $G_0$, which is a functor from small abelian categories to
abelian groups.  This functor is enhanced to a functor from small
abelian monoidal categories to associative, unital rings.
The categorification problem for a ring $R$ is to find a nice abelian
monoidal category $\modC $ with a ring isomorphism $G_0(\modC )\cong
R$.  For an additive category $\modC $, which is not necessarily
abelian, the usual Grothendieck group is not defined, but there is the
split Grothendieck group, denoted $\Ko(\modC )$,
which is generated by the isomorphism classes of objects in $\modC $,
with relations $[x\oplus y]=[x]+[y]$ for $x,y\in \Ob(\modC )$ (see
Section \ref{sec:trac-line-bicat-1} for details).

One can apply the split Grothendieck group $K_0$ to an additive
$2$-category $\CC$ to obtain a linear category $K_0(\CC)$.  (By a {\em
linear category} we mean a category enriched
over the category $\Ab$ of abelian groups.)  Here, the objects in
$K_0(\CC)$ are the objects in $\CC$, and for objects $x$ and $y$ the
hom-module $K_0(\CC)(x,y)$ is defined as the split Grothendieck
group $K_0(\CC(x,y))$ of the hom-category $\CC(x,y)$.

In \cite{Lau1}, the third author defined an additive $2$-category
$\dot\cU$, which categorifies the Beilinson--Lusztig--MacPherson
idempotented integral form $_{\mathcal{A}}\dot{\mathbf{U}}$ of
the quantum group $\bfU=\Uq$ \cite{BLM}.  That $\dot\cU$ categorifies
$_{\mathcal{A}}\dot{\mathbf{U}}$ means here that the split
Grothendieck group $K_0(\dot\cU)$ is isomorphic to
$_{\mathcal{A}}\dot{\mathbf{U}}$.  For an elementary introduction to the
 $2$-category $\dot\cU$ see \cite{Lau3}.

In this paper, we consider another decategorification functor on
linear categories, called the {\em trace}.
The trace of a linear category $\modC $ is defined to be the coend of
the Hom functor $\Hom_\modC\col\modC^{\mathrm{op}}\times\modC\to\Ab$,
i.e., the abelian group
\begin{gather*}
  \begin{split}
  \Tr(\modC)
  = \int^{x\in \modC }\Hom_\modC (x,x)
  =\left(\bigoplus_{x\in \Ob(\modC)}\Hom_\modC (x,x)\right)/\Span\{fg-gf\}.
  \end{split}
\end{gather*}
where the subgroup $\Span\{fg-gf\}$ is spanned by $fg-gf$ for
all pairs $f\col x\rightarrow y$, $g\col y\rightarrow x$ with $x,y\in
\Ob(\modC )$.
The trace $\Tr(\modC)$ is also known as the $0$th Hochschild--Mitchell
homology $\HH_0(\modC)$ of $\modC $.

Unlike the split Grothendieck group, the trace can be defined for any
linear category, not necessarily additive.  The trace of a linear
category $\C$ is naturally isomorphic to the trace of the additive
closure of $\C$.  Moreover, the trace $\Tr(\Kar(\C))$ of the Karoubi
envelope $\Kar(\C)$ is isomorphic to $\Tr(\C)$, see Section
\ref{sec:trac-line-bicat-1}.

Similarly to $K_0$, the trace $\Tr$ can be applied to linear
$2$-categories.  The trace $\Tr(\CC)$ of a linear $2$-category is a
linear category such that $\Ob(\Tr(\CC))=\Ob(\CC)$, and
\begin{gather*}
  \Tr(\CC)(x,y)=\Tr(\CC(x,y))
\end{gather*}
for $x,y\in\Ob(\C)$.

For an additive category $\modC$, the trace $\Tr(\modC)$ and the split
Grothendieck group $K_0(\modC)$ are related by the linear map
\begin{gather*}
  h_{\modC}\col K_0(\modC) \to \Tr(\modC)
\end{gather*}
which maps the equivalence class $[X]$ of $X\in\modC$ to the
equivalence class $[1_X]$ of the identity morphism $1_X\col X\to X$.

For an additive $2$-category $\CC$, we have a linear functor
\begin{gather*}
  h_{\CC}\col K_0(\CC) \to \Tr(\CC),
\end{gather*}
which maps each object to itself and maps morphisms by
\begin{gather*}
  h_{\CC(x,y)}\col K_0(\CC(x,y)) \to \Tr(\CC(x,y)).
\end{gather*}

In this paper we compute the traces of the additive $2$-categories
$\UcatD$ and $\U^*$, where $\U^*$ is another version of the
categorified quantum $\fsl_2$, introduced in \cite{Lau1}.  Here we
recall that the objects of $\UcatD$ (and, in fact, of $\U^*$) are the
integers, which are regarded as the elements of the weight lattice of
$\fsl_2$.  For more details about $\UcatD$, see Section
\ref{sec:2-categories-ucatd}.

The first main result of this paper is the following.

\begin{thm} \label{thm1}
  The linear functor
  \begin{gather*}
h_{\UcatD}\col K_0(\UcatD) \longrightarrow \Tr(\UcatD)=\HH_0(\UcatD)
  \end{gather*}
  is an isomorphism.  Hence $\Tr(\UcatD)$ is isomorphic to
  $_\calA\dot{\mathbf{U}}$.  Moreover, for $i>0$,
  and $n,m\in\mathbb{Z}$, the Hochschild--Mitchell homology
  group $\HH_i(\Udot(n,m))$ of $\Udot(n,m)$ is zero.
\end{thm}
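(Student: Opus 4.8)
The plan is to deduce the second assertion from the first together with Lauda's theorem $\Ko(\UcatD)\cong\UA$ of \cite{Lau1}, and to treat the third assertion separately. Since $h_{\UcatD}$ is the identity on objects, it is an isomorphism of linear categories precisely when, for every pair of weights $n,m\in\modZ$, the component $h_{\UcatD(n,m)}\col\Ko(\UcatD(n,m))\to\Tr(\UcatD(n,m))=\HH_0(\UcatD(n,m))$ is an isomorphism. By the invariance of the trace under passage to the additive closure and Karoubi envelope (Section~\ref{sec:trac-line-bicat-1}), one may compute $\Tr(\UcatD(n,m))$ from the full linear subcategory $\B_{n,m}$ of $\UcatD(n,m)$ on a set of grading‑shift representatives of the indecomposable $1$‑morphisms $X\col n\to m$; correspondingly $\Ko(\UcatD(n,m))$ is free over $\modZ[q,q^{-1}]$ on the classes $[X]$ of these indecomposables, and $h$ sends $[X]\mapsto[1_X]$. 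Thus everything reduces to showing that (i) the classes $[1_X]$ span $\Tr(\B_{n,m})$, and (ii) they are linearly independent.

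For (i) the idea is a straightening algorithm inside the trace. By Lauda's basis theorem the morphism spaces of $\UcatD$, hence of $\B_{n,m}$, are spanned by explicit string diagrams built from dots, crossings, cups, caps and dotted bubbles. Using the defining relation $[fg]=[gf]$ of the trace to rotate a diagram cyclically, together with the cyclic biadjoint structure of $\UcatD$, I would show that every such diagram is trace‑equivalent to a scalar multiple of a grading shift of the identity $1_X$ of an indecomposable $X$; the point is that in the trace a dot, a crossing, or a positive‑degree bubble can always be ``slid around'' and then absorbed using the $\fsl_2$‑relations of $\UcatD$. For instance a dotted bubble on $\onen$, realised via a cup--cap pair, rotates into a morphism factoring through $\E\F\onen$ or $\F\E\onen$ which the relations collapse, and one then iterates over the complexity of diagrams. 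This step is exactly where $\UcatD$ behaves differently from $\U^*$, whose trace is strictly larger (the current algebra), and it is the main obstacle: one must control the bubble and nilHecke combinatorics uniformly in $n,m$ and over the infinitely many indecomposables, proving that nothing extra survives in the trace.

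For (ii) I would argue by a rank count. By Lauda's theorem $\Ko(\UcatD(n,m))$ has rank equal to the number of canonical‑basis elements of $\UA$ of the given weight, so by (i) it suffices to show that $\Tr(\B_{n,m})$ has at least that rank, i.e.\ that the $[1_X]$ remain linearly independent. Here one can exploit that both $\Ko(\UcatD)$ and $\Tr(\UcatD)$ carry compatible actions induced by horizontal composition in $\UcatD$, under which $h$ is equivariant and its source is the (cyclic) bimodule $\UA$; independence of the $[1_X]$ then follows from a faithfulness input, for example the faithful $2$‑representations of $\UcatD$ on cohomology of partial flag varieties or on cyclotomic quotients, which descend to a map on traces separating the classes $[1_X]$. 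Alternatively one performs a direct leading‑term analysis of $\HH_0(\B_{n,m})$ using the explicit presentation. Combined with (i) this shows $h_{\UcatD}$ is an isomorphism, and hence $\Tr(\UcatD)\cong\Ko(\UcatD)\cong\UA$.

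Finally, for $\HH_i(\UcatD(n,m))=0$ when $i>0$: since Hochschild--Mitchell homology in all degrees is Morita invariant, and so insensitive to additive and Karoubi closure, it suffices to prove $\HH_i(\B_{n,m})=0$ for $i>0$. The natural approach is to present $\B_{n,m}$, up to Morita equivalence, as the path algebra of a locally finite quiver with relations governed by the combinatorics of the canonical basis (one expects a zig‑zag / type‑$A_\infty$ shape), and then to write down an explicit projective bimodule resolution, each finite truncation having finite global dimension, so that the higher homology vanishes; equivalently, one uses the cup--cap relations to build contracting homotopies on the bar complex in positive homological degree, the biadjoint structure being precisely what makes $\B_{n,m}$ homologically rigid above degree $0$. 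The difficulty in this last part is mainly organisational: arranging the quiver/bimodule bookkeeping so that the infinite‑dimensionality of the hom algebras of $\UcatD$ causes no trouble.
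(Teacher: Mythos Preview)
Your proposal misses the key structural fact that makes $\UcatD$ easy and $\U^*$ hard, and in doing so you effectively attempt the $\U^*$-style argument in the $\UcatD$ setting. In $\UcatD(n,m)$ the objects $\E(x)\la t\ra$ with $x$ in the canonical basis and $t\in\Z$ are \emph{not} isomorphic for different $t$; moreover by the degree computation in the paper (Corollary~\ref{3.4} / Corollary~\ref{2con}) one has $\End_{\UcatD(n,m)}(\E(x)\la t\ra)\cong\Z$ and $\UcatD(n,m)(\E(x)\la t\ra,\E(y)\la t'\ra)=0$ unless $t'>t$ or $(x,t)=(y,t')$. Thus the full subcategory on the $\E(x)\la t\ra$ is \emph{strongly upper-triangular}: there are no nontrivial cycles of morphisms at all, and every endomorphism of a basis object is already a scalar. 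From this, both assertions are immediate via Lemma~\ref{r19} and Proposition~\ref{r20}: the Hochschild--Mitchell complex decomposes as $\bigoplus_{X}C_*(\Z)$, giving $\HH_0\cong\Z B$ and $\HH_i=0$ for $i>0$. No straightening, no $2$-representations, no quiver resolutions are needed.

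By contrast, your step~(i) --- ``rotate a diagram cyclically and absorb dots, crossings, bubbles'' --- is exactly the mechanism the paper uses for $\Tr(\U^*)$ (Section~\ref{sec:presentation-un-m} onward), and there it does \emph{not} reduce everything to identities: the surviving classes are $\rF^{(b)}_\mu b^+(\tau)\rE^{(a)}_\lambda\onen$, a much larger set. The only reason these extra classes disappear in $\UcatD$ is that, with grading shifts kept distinct, such decorated diagrams are not endomorphisms at all (they raise degree), so they never enter $\bigoplus_X\End(X)$ in the first place. Your sketch gives no argument distinguishing the two situations beyond asserting that ``$\UcatD$ behaves differently from $\U^*$''; the actual difference is the trivial one above, and once you see it the whole theorem --- including the vanishing of higher $\HH_i$ --- is a two-line consequence of upper-triangularity, not a delicate diagrammatic or homological computation.
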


Now we consider the trace of the  additive $2$-category $\U^*$.

The Lie algebra $\fsl_2[t]=\fsl_2\otimes\Q[t]$ is the non-negative part of the loop Lie algebra $\fsl_2[t,t^{-1}]$.  The universal enveloping algebra $\UL$ of $\fsl_2[t]$ is called the {\em current algebra} of $\fsl_2$.  Thus, $\UL$ is the $\Q$-algebra generated by $E_i,F_i$ and $H_i$ for $i\geq 0$, where $X_i=X\otimes t^i$, subject to the following relations:
\begin{gather*}
  [H_i,H_j]=[E_i,E_j]=[F_i,F_j]=0,\\
  [H_i,E_j]=2E_{i+j},\quad [H_i,F_j]=-2F_{i+j},\quad [E_i,F_j]=H_{i+j}.\label{rel2}
\end{gather*}
An integral basis of the loop algebra $\mathbf U ({\mathfrak sl}_2[t,t^{-1}])$
was constructed in \cite{Gar}.
The following is our second main result.
\begin{thm} \label{qqq}
  As a linear category, $\Tr(\U^*)$ is isomorphic to the idempotented
  integral form $\dUZL$ of the current algebra $\UL$.
\end{thm}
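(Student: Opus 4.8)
The plan is to compute $\Tr(\U^*)$ directly by producing an explicit spanning set for each trace hom-space $\Tr(\U^*)(n,m) = \Tr(\U^*(n,m))$, and then matching it with a known basis of the corresponding weight space $\dUZL\onenn{n}$ (with $m-n$ determining the $\fsl_2$-weight shift). The first step is to recall the diagrammatic presentation of $\U^*$ from \cite{Lau1}: it has the same generating $1$-morphisms $\E\onenn{n}$, $\F\onenn{n}$ and the same $2$-morphism generators (cups, caps, crossings, dots) as $\UcatD$, but with a weaker set of relations — in particular the bubble relations are relaxed so that bubbles of all degrees survive rather than being forced to scalars. This means that $\End_{\U^*}(\onenn{n})$ is a polynomial-type ring on bubble generators, and more generally $\U^*(n,m)$ is a graded algebra with an explicit diagrammatic basis obtained from a "normal form" for string diagrams (reduce crossings, collect dots, pull bubbles to the far right). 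I would fix such a basis $\mathcal{B}(n,m)$ at the outset.

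Next I would analyze the effect of taking the trace, i.e.\ quotienting $\bigoplus_{b} \End_{\U^*(n,m)}(b)$ by the commutator subspace $\Span\{fg-gf\}$. The key geometric point, exactly as in the trace computations for nilHecke and KLR algebras, is that the trace of a diagram algebra is computed by \emph{closing diagrams up into an annulus}: a $2$-morphism from $b$ to $b$ becomes a diagram drawn on $S^1 \times [0,1]$, and the commutator relations are precisely the relations allowing one to slide pieces of the diagram around the annulus. So $\Tr(\U^*(n,m))$ is spanned by annular closures of the basis diagrams, and the slide moves plus the local relations of $\U^*$ let me push all dots together on each strand, resolve crossings, and reduce each annular diagram to a disjoint union of: (i) concentric "winding" circles carrying dots that wrap around the annulus — these produce the generators $E_i, F_i$ (a strand winding once, decorated by $i$ dots, contributing $E\otimes t^i$ or $F \otimes t^i$) — and (ii) contractible bubbles carrying dots, which close up to give the Casimir-type / $H_i$ elements. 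This identifies a spanning set of $\Tr(\U^*)$ indexed by the same data as Garland's integral basis of $\dUZL$ \cite{Gar}.

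The third step is to organize these generators and verify the relations. I would define a candidate functor $\phi\col \dUZL \to \Tr(\U^*)$ by sending the idempotent $\onenn{n}$ to the object $n$, sending $E_i\onenn{n}$ and $F_i\onenn{n}$ to the classes of the once-winding dotted strands described above, and sending $H_i\onenn{n}$ to the appropriate dotted-bubble class; then I would check, using the annular slide moves and the defining $2$-morphism relations of $\U^*$, that the current-algebra relations $[H_i,E_j] = 2E_{i+j}$, $[H_i,F_j]=-2F_{i+j}$, $[E_i,F_j]=H_{i+j}$, and the commutativity of the $E$'s, $F$'s, $H$'s all hold in $\Tr(\U^*)$ — each reduces to a local diagrammatic identity (a crossing resolution producing a dot-differential, or a bubble slide past a strand) that is already available in $\U^*$ or becomes available after closing up. Surjectivity of $\phi$ is the spanning statement from step two; injectivity I would get by a dimension/rank count against Garland's basis, possibly after base change, using that the relevant graded pieces of $\Tr(\U^*)$ are free $\Z$-modules because the diagrammatic normal form gives a basis (not merely a spanning set) — here I would lean on the analogous freeness results for $\Tr(\UcatD)$ from Theorem \ref{thm1} and its proof, applied weight-space by weight-space.

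The main obstacle I anticipate is the injectivity/basis step: establishing that the annular normal-form diagrams are linearly \emph{independent} in $\Tr(\U^*)$, not just spanning. Unlike $\UcatD$, where $K_0$ is known and Theorem \ref{thm1} pins down the answer, for $\U^*$ there is no $K_0$ shortcut, so I would need an independent lower bound on $\Tr(\U^*(n,m))$ — most naturally by constructing an explicit trace pairing or an action of $\Tr(\U^*)$ on a faithful module (e.g.\ a polynomial / cohomology-of-iterated-flag-variety representation carrying a $t$-grading) whose character already accounts for all of Garland's basis elements, thereby forcing $\phi$ to be injective. Carefully matching the $t$-grading on the current algebra side with the bubble-degree (equivalently, the extra internal grading that distinguishes $\U^*$ from $\UcatD$) is the bookkeeping that makes this delicate, but it is routine once the normal form is in hand.
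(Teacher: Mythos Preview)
Your overall strategy---define a functor from $\dUZL$ to $\Tr(\U^*)$, check the current-algebra relations diagrammatically, and match bases---is the same as the paper's. But there are two concrete gaps.

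First, your description of $\U^*$ is off. The $2$-category $\U^*$ is not obtained from $\U$ by relaxing bubble relations; it has exactly the same relations as $\U$, but one adjoins isomorphisms $f\cong f\la t\ra$ for every grading shift, so that $\U^*(n,m)(f,g)=\bigoplus_{t}\U(n,m)(f,g\la t\ra)$. In $\U$ itself the positive-degree bubbles already survive (they are morphisms $\onen\to\onen\la 2m\ra$, not scalars); what changes in $\U^*$ is only that they now count as \emph{endomorphisms} of $\onen$. This matters because your ``normal form'' argument, as written, is based on a wrong mental model of the relations.

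Second, and more seriously, you correctly identify linear independence as the crux but do not supply it, and your proposed workarounds do not work. Appealing to Theorem~\ref{thm1} is a dead end: that result rests on the strongly upper-triangular structure of $\UcatD$, which holds precisely because degree-zero endomorphism spaces are $\Z$; in $\U^*$ the endomorphism rings are large (all bubbles), so that mechanism is unavailable. The paper instead passes to the Karoubi envelope $\dotU^*$ (harmless for the trace by Proposition~\ref{r1}) in order to use the explicit KLMS bases $B^+(n,a,b,\delta)$ for $\Hom$-spaces between the objects $\F^{(b)}\E^{(a)}\onen$. With that in hand, the trace is computed by an explicit projection $\pi$ on endomorphisms (Proposition~\ref{triangular} and Lemma~\ref{ap1}): one iteratively pushes the inner cap past the outer cup using thick-calculus identities, landing in the subspace spanned by $\F^{(b)}_\mu\,b^+(\tau)\,\E^{(a)}_\lambda\onen$, and checks $\pi(gh)=\pi(hg)$. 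This directly yields a \emph{basis}, not merely a spanning set, so no auxiliary faithful module is needed. Your annular picture is the right intuition for this projection, but making it rigorous requires exactly the thick-calculus input (splitters, Schur-labelled strands, and the KLMS basis) that your proposal omits.
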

For the definition of $\dUZL$, see Section \ref{sec:linear-category-modz}.

We conclude by summarizing some advantages of $\Tr$ compared with
$K_0$:
\begin{itemize}
\item
The trace is defined for linear categories, not only for additive categories.
\item
The trace is sometimes richer than $K_0$.
\item
The trace of the category is isomorphic to the trace of its Karoubi
envelope.  This property does not hold for the split Grothendieck
group in general.
\end{itemize}

The paper is organized as follows. After recalling general facts about
traces for (linear) categories and bicategories in the first two
sections, we introduce (strongly) upper-triangular linear categories
and compute all Hochschild--Mitchell homology groups for them.  Then
we recall the definitions of the additive $2$-categories $\Ucat$,
$\Udot$ and $\U^*$ and show that $\Udot$ is strongly upper-triangular,
yielding the proof of Theorem \ref{thm1}.  In Section
\ref{sec:presentation-un-m}, we give an explicit presentation of the
categories $\U^*(m,n)$ by generators and relations.  Finally, we
provide the proof of Theorem~\ref{qqq}.
\vspace*{3mm}

\noindent
{\it Acknowledgement}.
The first and  last authors (A.B. $\&$ M.\v{Z}.) were supported by Swiss
National Science Foundation under Grant PDFMP2-141752/1.
K.H. was partially supported by JSPS Grant-in-Aid for Scientific Research (C) 24540077.
A.D.L  was partially supported by NSF grant DMS-1255334 and by the John Templeton Foundation. A.D.L is extremely grateful to Mikhail Khovanov for sharing his insights and vision about higher representation theory.  Some of the calculations appearing in this article were computed in collaboration with him.

\section{Traces of categories and bicategories}
In this section we define the traces of  categories and bicategories.

\subsection{Traces of categories}
Let $\modC $ be a small category.
We denote by $\modC(x,y)$ the set of morphisms from $x$ to $y$ in $\modC$.
For an object $x\in \Ob(\modC )$, set
$\End_\modC (x)=\modC (x,x)$, the set of endomorphism of $x$, which has a
monoid structure.  Set $\End(\modC )=\coprod_{x\in \Ob(\modC )}\End_\modC (x)$.

The {\em trace} of $\modC $ is defined to be the coend
of the Hom functor
\begin{gather*}
  \begin{split}
    \Tr(\modC )
    = \int^{x\in \modC }\modC (x,x)\in \Set.
  \end{split}
\end{gather*}
By abuse of notation, we will also write
$\Tr\modC$ in later sections. 
Unravelling the definition of the coend
(see e.g. \cite[p. 226]{MacLane})
 $\Tr(\modC )$ may be defined also by
\begin{gather*}
  \Tr(\modC )=\End(\modC )/\sim,
\end{gather*}
where the equivalence relation $\sim$ is generated by $fg\sim gf$ for all pairs $f\col x\rightarrow y$, $g\col y\rightarrow x$ with
$x,y\in \Ob(\modC )$.  For $f\in \End(\modC )$, let
$[f]\in \Tr(\modC )$ denote the corresponding equivalence class.

The trace gives rise to a functor
\begin{gather*}
  \Tr\col \Cat\rightarrow \Set,
\end{gather*}
from the category of small categories to the category of sets,
which maps a functor $F\col \modC \rightarrow \modD $ to the map $\Tr(F)\col \Tr(\modC )\rightarrow \Tr(\modD )$
defined by
\begin{gather*}
  \Tr(F)([f])=[F(f)]\in \Tr(\modD )
\end{gather*}
for $f\col x\rightarrow x$ in $\modC $.

Here we summarize some useful facts:
\begin{itemize}
\item For $f\in \End_\modC (x)$ and an isomorphism $a\col 
\xymatrix@1{y \ar[r]^-{\simeq} & x}$, we have
  $[f]=[a^{-1}fa]$ in $\Tr(\modC )$.
\item
If $\sigma \col 
\xymatrix@1{F \ar@{=>}[r]^-{\simeq} & G}$ is a natural isomorphism between two
functors $F,G\col \modC \rightarrow \modD $, then $\Tr(F)=\Tr(G)$.  Thus, the functor
$\Tr\col \Cat\rightarrow \Set$ can be refined to a $2$-functor from the
$2$-category of categories, functors and natural isomorphisms to the
$2$-category of sets, functions and identities of functions.
\item
Equivalence of categories
$\modC \simeq \modD $ induces an isomorphism
$\Tr(\modC )\cong\Tr(\modD )$.
\end{itemize}

\subsection{Products}
\label{sec:products}

Recall that the categories $\Cat$ and $\Set$ have finite products given by
direct products for categories and sets, respectively.

\begin{lem}
  \label{r9}
  The functor $\Tr\col \Cat\rightarrow \Set$ preserves finite products.
\end{lem}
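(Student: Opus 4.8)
The plan is to verify directly that the canonical comparison map is a bijection, using the explicit description $\Tr(\modC) = \End(\modC)/\!\sim$ given above. First I would note that the functor $\Tr\col\Cat\to\Set$ automatically preserves the terminal object: the terminal category $\mathbf{1}$ has a single object $*$ with $\End_{\mathbf 1}(*)=\{1_*\}$, so $\Tr(\mathbf 1)$ is a one-point set, which is terminal in $\Set$. It remains to treat binary products, since finite products are built from the terminal object and binary products. Given small categories $\modC$ and $\modD$, the two projections $p\col\modC\times\modD\to\modC$ and $q\col\modC\times\modD\to\modD$ induce a map
\begin{gather*}
  (\Tr(p),\Tr(q))\col \Tr(\modC\times\modD)\longrightarrow \Tr(\modC)\times\Tr(\modD),
\end{gather*}
and the task is to show this is a bijection.

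For surjectivity: given classes $[f]\in\Tr(\modC)$ and $[g]\in\Tr(\modD)$ represented by endomorphisms $f\col x\to x$ in $\modC$ and $g\col y\to y$ in $\modD$, the pair $(f,g)$ is an endomorphism of $(x,y)$ in $\modC\times\modD$, and its class maps to $([f],[g])$; hence the map is onto. For injectivity: an arbitrary endomorphism of an object $(x,y)$ in $\modC\times\modD$ is of the form $(f,g)$ with $f\in\End_\modC(x)$, $g\in\End_\modD(y)$, so the map $\End(\modC\times\modD)\to\End(\modC)\times\End(\modD)$, $(f,g)\mapsto(f,g)$, is a bijection of sets; I must check it descends to an injection on the quotients, i.e. that two endomorphisms of $\modC\times\modD$ having componentwise-equal images in $\Tr(\modC)\times\Tr(\modD)$ are already equivalent in $\Tr(\modC\times\modD)$. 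The key observation is that a single generating relation in $\modC$, coming from a pair $u\col x\to x'$, $v\col x'\to x$ with $vu\sim uv$, can be promoted to a generating relation in $\modC\times\modD$ by tensoring with an identity in $\modD$: the pair $(u,1_y)\col(x,y)\to(x',y)$ and $(v,1_y)\col(x',y)\to(x,y)$ gives $(vu,1_y)\sim(uv,1_y)$ in $\Tr(\modC\times\modD)$, and similarly for relations in $\modD$. Chaining such moves shows that if $(f,g)\sim(f',g)$ whenever $f\sim f'$ in $\modC$ and $(f',g)\sim(f',g')$ whenever $g\sim g'$ in $\modD$, so equality of both components in the respective traces forces equality in $\Tr(\modC\times\modD)$; combined with the invariance $[h]=[a^{-1}ha]$ under isomorphisms to handle representatives living over different objects, this yields injectivity.

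The only mildly delicate point — and the step I would expect to require the most care — is the injectivity argument, specifically making sure that when $[f]=[f']$ in $\Tr(\modC)$ the two representatives may be connected through a zig-zag of generating relations and isomorphisms all of which can be carried out "in the first coordinate" of $\modC\times\modD$ while keeping the second coordinate fixed. This is straightforward because each generating move ($fg\sim gf$, and conjugation by an isomorphism) is local to one factor and tensoring with $1_y$ (resp. $1_x$) is functorial, but it should be written out so that the bookkeeping of base objects is transparent. Everything else is formal. Alternatively, one can avoid the hands-on argument entirely by invoking the coend formula and the fact that a product of coends is a coend of the product bifunctor, $\int^{(x,y)}(\modC\times\modD)((x,y),(x,y)) \cong \int^{x}\modC(x,x)\times\int^{y}\modD(y,y)$, since in $\Set$ finite products commute with (co)limits in each variable; but the elementary verification above is self-contained and matches the level of the surrounding exposition.
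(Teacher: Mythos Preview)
Your approach matches the paper's: construct the comparison map between $\Tr(\modC)\times\Tr(\modD)$ and $\Tr(\modC\times\modD)$ explicitly and observe that $\Tr$ sends the terminal category to a point. The paper simply asserts that $\varphi([f],[g])=[(f,g)]$ is a bijection without further argument, so your outline is in fact more detailed than the original.

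One small correction to your injectivity sketch: tensoring \emph{both} morphisms with the identity, i.e.\ using $(u,1_y)$ and $(v,1_y)$, only yields $(vu,1_y)\sim(uv,1_y)$, not $(vu,g)\sim(uv,g)$ as you need. The fix is immediate --- use $(u,g)$ and $(v,1_y)$ instead, so that $(u,g)(v,1_y)=(uv,g)$ and $(v,1_y)(u,g)=(vu,g)$ --- and then the chain argument goes through without any appeal to conjugation by isomorphisms (which is unnecessary here and not available in general, since $\modC$ and $\modD$ need not have any nontrivial isomorphisms).
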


\begin{proof}
For $\modC ,\modD \in \Cat$, we have a bijection
\begin{gather}
  \label{e1}
\varphi\col \Tr(\modC )\times \Tr(\modD )\rightarrow \Tr(\modC \times \modD )
\end{gather}
defined by $\varphi([f],[g])=[(f,g)]$ for $f\in \End(\modC )$,
$g\in \End(\modD )$.
Note also that the trace $\Tr(\mathcal{T})$ of the terminal object $\mathcal{T}$ in
$\Cat$, which is a category with one object $t$ and one morphism $1_t$, is a
one-element set $\{[1_t]\}$, which is terminal in $\Set$.
\end{proof}

By Lemma \ref{r9}, the functor $\Tr$ defines a (strong) monoidal functor between
 cartesian monoidal categories
\begin{gather*}
  \Tr\col (\Cat,\times )\rightarrow (\Set,\times ).
\end{gather*}

\subsection{Traces of bicategories}

Recall that a (small) {\em bicategory} $\CC$ consists of
\begin{itemize}
\item a set
$\Ob(\CC)$ of {\em objects} in $\CC$,
\item a small category $\CC(x,y)$ for $x,y\in \Ob(\CC)$,
\item a functor $\circ\col \CC(y,z)\times \CC(x,y)\rightarrow \CC(x,z)$ for
  $x,y,z\in \Ob(\CC)$,
\item an object $I_x\in \CC(x,x)$  for $x\in \Ob(\CC)$,
\item natural isomorphisms
\begin{gather*}
  \alpha _{h,g,f}\col \xymatrix{(h\circ g)\circ f \ar[r]^{\simeq} & h\circ (g\circ f), } \\
  \lambda _f\col \xymatrix{I_y\circ f \ar[r]^-{\simeq} &f} ,\\
  \rho _f\col \xymatrix{f\circ I_x \ar[r]^-{\simeq} &f},
\end{gather*}
for objects $f\in \Ob(\CC(x,y))$, $g\in \Ob(\CC(y,z))$, $h\in \Ob(\CC(z,w))$,
\end{itemize}
which satisfy certain relations (pentagon etc.).  For more details see \cite[Chapter 7]{Bor} or \cite{Benabou}.
A $2$-category is a bicategory such that the $\alpha _{h,g,f}$, $\lambda _f$ and $\rho _f$ are identities.

For a bicategory $\CC$, we define a category $\TrCC$ with
$\Ob(\TrCC)=\Ob(\CC)$ as follows.
For $x,y\in \Ob(\CC)$, set $\TrCC(x,y)=\Tr(\CC(x,y))$.
For $x,y,z\in \Ob(\CC)$, define the composition map
\begin{gather*}
  \circ\col \TrCC(y,z)\times \TrCC(x,y)\rightarrow \TrCC(x,z)
\end{gather*}
as the composite
\begin{gather}
  \label{e2} \xymatrix{
  \Tr(\CC(y,z))\times \Tr(\CC(x,y)) \ar[r]^-{\varphi}_-{\simeq} &
  \Tr(\CC(y,z)\times \CC(x,y)) \ar[r]^-{\Tr(\circ)} &
  \Tr(\CC(x,z))
  },
\end{gather}
where $\varphi$ is the isomorphism as defined in equation ~\eqref{e1}.  For $\sigma \in \End_{\CC(x,y)}$ and
$\tau \in \End_{\CC(y,z)}$, we have
$[\tau ]\circ[\sigma ]=[\tau \circ\sigma ]$.
The identity morphism for $x\in \Ob(\TrCC)=\Ob(\CC)$ is given by
$[1_{I_x}]$.  The associativity and unitality of composition in $\TrCC$ follows from
the natural isomorphisms $\alpha $, $\lambda $ and $\rho $.  For example, for
$2$-morphisms $\sigma \col f\Rightarrow f$, $\tau \col g\Rightarrow g$ and $\rho \col h\Rightarrow h$ in $\CC$, we have
\begin{gather*}
  ([\rho ]\circ[\tau ])\circ[\sigma ]=
  [(\rho \circ\tau )\circ\sigma ]=
  [\alpha _{h,g,f}((\rho \circ\tau )\circ\sigma )\alpha ^{-1}_{h,g,f}]=
  [\rho \circ(\tau \circ\sigma )]=
  [\rho ]\circ([\tau ]\circ[\sigma ]).
\end{gather*}

\begin{rem}
  \label{r10}
  Recall that a monoidal category may be regarded as a bicategory with
  one object.  Hence the above definition of the trace of bicategory
  specializes to the trace of monoidal category, which takes values in
  categories with one object, i.e., monoids. An internal notion of trace for 1-morphisms inside of a bicategory is given in \cite{GK}.
\end{rem}

\subsection{Horizontal trace of a bicategory}

This subsection is a digression on ``horizontal traces'' of
bicategories, which is not necessary for the rest of the paper.
The motivation for considering ``horizontal traces'' is given at the
end of this subsection.

For a bicategory $\CC$, there is another structure which may be called
the ``trace'' of $\CC$.  In graphical explanations, a $1$-morphism in a
bicategory is sometimes drawn as a horizontal arrow, and a
$2$-morphism between two $1$-morphisms are ``vertical face'' bounded
by two horizontal arrows.  The trace $\TrCC$ of $\CC$ is obtained from
$\CC$ by taking traces in the ``vertical direction'', i.e., in
$2$-morphisms.  Thus one might call $\TrCC$ the {\em vertical trace}
of $\CC$.

Here we introduce another notion of ``trace'' of a bicategory $\CC$,
which we call the ``horizontal trace'' $\TrhorCC$, which is obtained
from $\CC$ by taking trace in the ``horizontal direction''.  More
precisely, $\TrhorCC$ is the category defined as follows.  Here, for
simplicity, our bicategory $\CC$ is a $2$-category.
The objects
in $\TrhorCC$ are $1$-endomorphisms
$f\col x\rightarrow x$, $x\in \Ob(\CC)$.
Morphisms from $f\col x\rightarrow x$ to $g\col y\rightarrow y$ are equivalence classes $[p,\sigma ]$
of pairs $(p,\sigma )$ of a morphism $p\col x\rightarrow y$ in $\CC$ and a $2$-morphism
$\sigma \col p\circ f\Rightarrow g\circ p\col x\rightarrow y$, depicted by
$$
\xy
 (7,10);(-7,10); **\dir{-} ?(1)*\dir{>};
 (7,-10);(-7,-10); **\dir{-} ?(1)*\dir{>};
 (-10,-7);(-10,7); **\dir{-} ?(1)*\dir{>};
 (10,-7);(10,7); **\dir{-} ?(1)*\dir{>};
 (-3,-3);(3,3); **\dir{=} ?(1.05)*\dir{>};
 (-10,10)*{x};
 (10,10)*{x};
 (-10,-10)*{y};
 (10,-10)*{y};
 (0,12)*{f};
 (0,-12)*{g};
 (-12,0)*{p};
 (12,0)*{p};
 (2,-2)*{\sigma};
 (0,15)*{};
 (0,-15)*{};
\endxy
$$
where the equivalence relation on such
pairs is generated by
\begin{gather*}
 (p,(g\circ\tau )\sigma) \sim (p',\sigma (\tau \circ f))
\end{gather*}
or
$$
\xy
 (7,10);(-9,10); **\dir{-} ?(1)*\dir{>};
 (7,-10);(-9,-10); **\dir{-} ?(1)*\dir{>};
 (-14,8)*{};(-14,-8)*{} **\crv{(-20,0)}?(1)*\dir{>};
 (-10,8)*{};(-10,-8)*{} **\crv{(-4,0)}?(1)*\dir{>};
 (10,-7);(10,7); **\dir{-} ?(1)*\dir{>};
 (-1,-2);(5,4); **\dir{=} ?(1.05)*\dir{>};
 (-14,0);(-10,0); **\dir{=} ?(1.12)*\dir{>};
 (-12,10)*{x};
 (10,10)*{x};
 (-12,-10)*{y};
 (10,-10)*{y};
 (-1,12)*{f};
 (-1,-12)*{g};
 (-19,0)*{p};
 (-5,0)*{p'};
 (12,0)*{p};
 (4,-1)*{\sigma};
 (-12,2)*{\tau};
 (0,15)*{};
 (0,-15)*{};
\endxy \quad \sim \quad \xy
 (9,10);(-7,10); **\dir{-} ?(1)*\dir{>};
 (9,-10);(-7,-10); **\dir{-} ?(1)*\dir{>};
 (14,8)*{};(14,-8)*{} **\crv{(20,0)}?(1)*\dir{>};
 (10,8)*{};(10,-8)*{} **\crv{(4,0)}?(1)*\dir{>};
 (-10,-7);(-10,7); **\dir{-} ?(1)*\dir{>};
 (-5,-4);(1,2); **\dir{=} ?(1.05)*\dir{>};
 (10,0);(14,0); **\dir{=} ?(1.12)*\dir{>};
 (12,10)*{x};
 (-10,10)*{x};
 (12,-10)*{y};
 (-10,-10)*{y};
 (1,12)*{f};
 (1,-12)*{g};
 (19,0)*{p'};
 (5,0)*{p};
 (-12,0)*{p'};
 (0,-3)*{\sigma};
 (12,2)*{\tau};
\endxy
$$
for $p,p'\col x\rightarrow y$, $\sigma \col p\circ f\Rightarrow g\circ p'\col x\rightarrow y$,
$\tau \col p'\Rightarrow p\col x\rightarrow y$.
The composite of two morphisms $[p,\sigma ]\col (f\col x\rightarrow x)\rightarrow (g\col y\rightarrow y)$ and
$[q,\tau ]\col (g\col y\rightarrow y)\rightarrow (h\col z\rightarrow z)$ is defined by
\begin{gather*}
  [q,\tau ][p,\sigma ]=[qp, (\tau \circ p)(q\circ \sigma )].
\end{gather*}
The identity morphism is given by
\begin{gather*}
  1_f=[1_x, 1_f]
\end{gather*}
for $f\col x\rightarrow x$.  It is not difficult to check that $\TrhorCC$ is a
well-defined category.

The notion of the horizontal trace of a bicategory may be regarded as
a categorification of the notion of the trace of a category.

There is a canonical functor $\i\col \TrCC\rightarrow \TrhorCC$ defined as follows.
For an object $x\in \Ob(\CC)$ in $\TrCC$, set
\begin{gather*}
  \i(x)=(1_x\col x\rightarrow x).
\end{gather*}
For a morphism $[\sigma \col f\Rightarrow f\col x\rightarrow y]\col x\rightarrow y$ in $\TrCC$, set
\begin{gather*}
  \i([\sigma ])=[f, \sigma\col f\circ 1_x\Rightarrow 1_y\circ f].
\end{gather*}
It is easy to see that the functor $\i$ is full and faithful.
Thus, $\TrhorCC$ has more information about $\CC$ than $\TrCC$.

The horizontal trace may be loosely regarded as a generalization to
bicategories of the {\em tube algebra} of monoidal category (see for
example \cite{Ocn,EY}).  If our bicategory $\CC$ admits biadjoints, then the hom-sets
$\TrhorCC(f\col x\to x, g\col y\to y)$ can be naturally regarded as a
``skein module of $\CC$-diagrams'' on the annulus $A=S^1\times[0,1]$
with the $1$-morphism $x$ put on a point $(\mathrm{pt},0)\in\partial
A$ and $y$ on $(\mathrm{pt},1)\in\partial A$.  It is natural to
consider skein modules of $\CC$-diagrams on more general surfaces with
specified $1$-morphisms on boundary points.  The horizontal trace
$\TrhorCC$ plays an analogous role for studying such general skein
modules, as the tube algebra does to the skein modules associated to a
monoidal category with duals.  It would be interesting to consider the
horizontal trace and the skein modules for $\CC={\mathcal{U}}^*$.

\section{Traces of linear (bi)categories}
\label{sec:trac-line-bicat-1}

\subsection{Linear and additive categories}
A {\em linear} category (also called $\Ab$-category or {\em
preadditive} category) is a category enriched in
 the category $\Ab$
of abelian groups. This means it is a category whose hom-sets are
equipped with structures of abelian groups and the composition maps
are bilinear  (compare \cite[p. 276]{
MacLane}).

A {\em linear functor}  between
two linear categories $\modC $ and $\modD $ is a functor $F$ from
$\modC $ to $\modD $ such that for $x,y\in \Ob(\modC )$, the map
$F\col \modC (x,y)\longrightarrow \modD (F(x),F(y))$ is an
abelian group homomorphism.

An {\em additive} category is a linear category equipped with a zero
object and direct sums.  For a linear category $\modC $, there is a
universal additive category generated by $\modC $, called the {\em
additive closure} $\modC ^{\oplus}$, in which the objects are formal
finite direct sums of objects in $\modC $ and the morphisms are
matrices of morphisms in $\modC$.  There is a canonical fully faithful
functor $i\col \modC \rightarrow \modC ^\oplus$.  Every linear functor
$F\col \modC \rightarrow \modD $ from $\modC $ to an additive category
$\modD $ factors through $i$ uniquely up to natural isomorphism.

\subsection{Traces of linear categories}
\label{sec:trac-line-categ}

Similarly to the case of categories which are not linear, for a linear
category $\C$, the {\em trace} $\Tr(\modC )$ of $\modC $ is defined to
be the coend of the Hom functor
\begin{gather*}
  \Tr(\C )=\int^{x\in \modC }\modC (x,x)
=\bigoplus_{x\in \Ob(\modC )}\modC (x,x)/\Span\{fg-gf\},
\end{gather*}
where $f$ and $g$ runs through all pairs of morphisms $f\col
x\longrightarrow y$, $g\col y\longrightarrow x$ with $x,y\in
\Ob(\modC )$.

The trace $\Tr$ gives a functor from the category of small linear
categories to the category of abelian groups.

\begin{lem}
  \label{r14}
  If $\modC $ is an additive category, then for $f\col x\rightarrow x$ and $g\col y\rightarrow y$,
  we have
  \begin{gather*}
    [f\oplus g]=[f]+[g].
  \end{gather*}
\end{lem}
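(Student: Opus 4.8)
The plan is to prove the identity $[f\oplus g]=[f]+[g]$ in $\Tr(\modC)$ by exhibiting explicit morphisms in the additive category $\modC$ and rewriting $f\oplus g$ as a sum of commutators-modulo-relations. Write $X=x\oplus y$, and let $\iota_x\col x\to X$, $\iota_y\col y\to X$ be the canonical inclusions and $\pi_x\col X\to x$, $\pi_y\col X\to y$ the canonical projections, so that $\pi_x\iota_x=1_x$, $\pi_y\iota_y=1_y$, $\pi_x\iota_y=0$, $\pi_y\iota_x=0$, and $\iota_x\pi_x+\iota_y\pi_y=1_X$. The endomorphism $f\oplus g\col X\to X$ is by definition $\iota_x f\pi_x+\iota_y g\pi_y$.

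First I would use the additivity (bilinearity) of composition together with the fact that $\Tr(\modC)$ is a quotient of a direct sum of abelian groups to split
\begin{gather*}
  [f\oplus g]=[\iota_x f\pi_x+\iota_y g\pi_y]=[\iota_x f\pi_x]+[\iota_y g\pi_y].
\end{gather*}
Then for the first summand I would apply the defining relation $[uv]=[vu]$ of the trace with $u=\iota_x f\col x\to X$ and $v=\pi_x\col X\to x$, giving $[\iota_x f\pi_x]=[\pi_x\iota_x f]=[1_x f]=[f]$. Symmetrically $[\iota_y g\pi_y]=[\pi_y\iota_y g]=[g]$. Combining these yields $[f\oplus g]=[f]+[g]$, as desired.

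The only genuine point requiring care is the first step: one must check that the class in $\Tr(\modC)$ of a sum of two endomorphisms of the \emph{same} object $X$ equals the sum of their classes. This is immediate from the construction of $\Tr(\modC)$ as $\bigl(\bigoplus_{z}\modC(z,z)\bigr)/\Span\{fg-gf\}$: addition in the component $\modC(X,X)$ is the ambient abelian group addition, which descends to the quotient, so $[\sigma+\tau]=[\sigma]+[\tau]$ for any $\sigma,\tau\in\End_\modC(X)$. I do not expect a real obstacle here; the whole argument is a short manipulation with the universal property of biproducts and the defining relation of the trace, and the main thing to be careful about is keeping the domains and codomains of $\iota_x,\pi_x,\iota_y,\pi_y$ straight when applying $[uv]=[vu]$.
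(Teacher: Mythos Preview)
Your proof is correct and follows essentially the same approach as the paper's: both decompose $f\oplus g$ as the sum $\iota_x f\pi_x + \iota_y g\pi_y$ (the paper writes this as $(f\oplus 0)+(0\oplus g)$), use additivity to split the class, and then apply the trace relation $[uv]=[vu]$ with the inclusion and projection to reduce $[\iota_x f\pi_x]$ to $[f]$. Your version is slightly more explicit about why additivity holds at the first step, but there is no substantive difference.
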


\begin{proof}
  Since $f\oplus g=(f\oplus 0)+(0\oplus g)\col x\oplus y\rightarrow x\oplus y$, we have
  \begin{gather*}
    [f\oplus g]=[f\oplus 0]+[0\oplus g].
  \end{gather*}
  Now we have $[f\oplus0]=[ifp]=[pif]=[f]$ where $p\col x\oplus y\rightarrow x$
  and $i\col x\rightarrow x\oplus y$ are the projection and the
  inclusion. Similarly, we have $[0\oplus g]=[g]$. Hence the result.
\end{proof}

\subsection{Traces of linear bicategories}
\label{sec:trac-line-bicat}

A {\em linear bicategory} is a bicategory $\CC$ such that
\begin{enumerate}
\item for $x,y\in \Ob(\CC)$, the category $\CC(x,y)$ is equipped with a
  structure of a linear category,
\item for $x,y,z\in \Ob(\CC)$, the functor
  $\circ\col \CC(y,z)\times \CC(x,y)\rightarrow \CC(x,z)$ is ``bilinear'' in the sense
  that the functors $-\circ f\col \CC(y,z)\rightarrow \CC(x,z)$ for
  $f\in \Ob(\CC(x,y))$ and $g\circ -\col \CC(x,y)\rightarrow \CC(x,z)$ for
  $g\in \Ob(\CC(y,z))$ are linear functors.
\end{enumerate}

The trace $\Tr(\CC)$ of a linear bicategory $\CC$ is defined similarly
to the trace of bicategory, and is equipped with a linear category
structure.

\subsection{Traces of additive closures}

Here we consider the trace of the additive closure $\modC ^{\oplus}$ of a linear
category $\modC $.
The homomorphism
\begin{gather*}
  \Tr(i)\col \Tr(\modC )\rightarrow \Tr(\modC ^\oplus)
\end{gather*}
induced by the canonical functor $i\col \modC \rightarrow \modC ^{\oplus}$ is an
isomorphism.
In fact, the inverse
$\tr\col \Tr(\modC ^\oplus)\rightarrow \Tr(\modC )$ is defined by
\begin{gather*}
  \tr([(f_{k,l})_{k,l}])=\sum_k [f_{k,k}]
\end{gather*}
for an endomorphism in $\modC ^\oplus$
\begin{gather*}
  (f_{k,l})_{k,l\in \{1,\ldots ,n\}}\col x_1\oplus\dots \oplus x_n\rightarrow x_1\oplus\dots \oplus x_n
\end{gather*}
with $f_{k,l}\col x_l\rightarrow x_k$ in $\modC $.

\subsection{Traces and the Karoubi envelope}

Let $\modC $ be a linear category.  An idempotent $e\col x\rightarrow x$ in $\modC $ is
said to {\em split} if there is an object $y$ and morphisms $g\col x\rightarrow y$,
$h\col y\rightarrow x$ such that $hg=e$ and $gh=1_y$.

The {\em Karoubi envelope} $\Kar(\mathcal C)$ (also called {\em
idempotent completion}) of $\modC $ is the category whose objects are
pairs $(x,e)$ of objects $x\in \Ob(\C)$ and an idempotent endomorphism
$e : x\rightarrow x$, $e^2=e$, in $\mathcal C$ and whose morphisms
$$f : (x,e)\rightarrow(y,e')$$
are morphisms $f\col x\rightarrow y$ in $\modC $ such that $f=e'fe$.
Composition is induced by the composition in $\mathcal C$ and the identity morphism is
$e : (x,e)\rightarrow(x,e)$.
$\Kar(\modC )$ is equipped with a linear category structure.
It is well known that the Karoubi envelope of an additive category
is additive.

There is a natural fully faithful linear functor $$\iota:\mathcal C\rightarrow
\Kar(\mathcal C)$$ such that $\iota(x)=(x,1_x)$ for $x\in \Ob(\modC )$ and
$\iota(f\col x\rightarrow y)=f$.  The Karoubi envelope $\Kar(\modC )$ is
universal in the sense that if $F\col \modC \rightarrow \modD $ is
a linear functor from $\modC$ to a linear category $\modD $ with split idempotents,
then $F$ extends to a functor from $\Kar(\modC)$ to $\modD$ uniquely
up to natural isomorphism \cite[Proposition 6.5.9]{Bor}.

\begin{prop}
  \label{r1}
  The functor $\iota\col \modC\longrightarrow \Kar(\modC)$ induces an isomorphism
  \begin{gather*}
    \Tr(\iota)\col \Tr(\modC )\overset{\cong}\longrightarrow \Tr(\Kar(\modC )).
  \end{gather*}
\end{prop}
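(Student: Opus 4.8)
The plan is to exhibit an explicit inverse to $\Tr(\iota)$ and check the two composites are the identity. First I would observe that, by Proposition~\ref{r1}'s own hypotheses, every object of $\Kar(\modC)$ is of the form $(x,e)$ with $e$ an idempotent in $\modC$, and an endomorphism of $(x,e)$ is a morphism $f\col x\to x$ in $\modC$ with $f=efe$; in particular $f$ is already an endomorphism of $x$ in $\modC$, so it has a class $[f]\in\Tr(\modC)$. I would define the candidate map
\begin{gather*}
  \psi\col \Tr(\Kar(\modC))\longrightarrow \Tr(\modC), \qquad \psi([f\col(x,e)\to(x,e)])=[f]\in\Tr(\modC).
\end{gather*}
The first thing to verify is that $\psi$ is well defined, i.e.\ that it respects the trace relation in $\Kar(\modC)$: if $f\col(x,e)\to(y,e')$ and $g\col(y,e')\to(x,e)$ in $\Kar(\modC)$, then $[fg]=[gf]$ in $\Tr(\modC)$ — but this is immediate since $fg$ and $gf$ are honest morphisms in $\modC$ with $fg\col x\to x$, $gf\col y\to y$, so the defining relations of $\Tr(\modC)$ apply directly.

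Next I would check $\psi\circ\Tr(\iota)=\mathrm{id}_{\Tr(\modC)}$. This is essentially trivial: for $f\col x\to x$ in $\modC$, $\iota(f)$ is the endomorphism $f$ of $(x,1_x)$, and $\psi([f\col (x,1_x)\to(x,1_x)])=[f]$. The substantive direction is $\Tr(\iota)\circ\psi=\mathrm{id}_{\Tr(\Kar(\modC))}$. Here I would take an endomorphism $f\col(x,e)\to(x,e)$ in $\Kar(\modC)$, so $f=efe$ in $\modC$; I must show $[f\col x\to x]_{\modC}$ maps under $\Tr(\iota)$ to $[f\col(x,e)\to(x,e)]_{\Kar(\modC)}$, equivalently that $[\iota(f)]=[f\col(x,e)\to(x,e)]$ in $\Tr(\Kar(\modC))$. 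The point is that in $\Kar(\modC)$ one can factor: $f\col x\to x$ in $\modC$ gives $\iota(f)\col (x,1_x)\to(x,1_x)$, and $f=e\cdot f\cdot e$ exhibits it as the composite $(x,1_x)\xrightarrow{e}(x,e)\xrightarrow{f}(x,e)\xrightarrow{e}(x,1_x)$ where by abuse $e$ denotes the morphism $e$ viewed with the appropriate source/target idempotents. Then the trace relation $[hg]=[gh]$ in $\Tr(\Kar(\modC))$ — applied to $g=(ef)\col(x,1_x)\to(x,e)$ and $h=e\col(x,e)\to(x,1_x)$ — rearranges $[\iota(f)] = [e\circ(ef)] = [(ef)\circ e] = [efe] = [f\col(x,e)\to(x,e)]$, using $e^2=e$ and $f=efe$.

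The main obstacle, such as it is, is bookkeeping rather than conceptual: one must be careful about which idempotents decorate the source and target of each morphism when cycling morphisms around the trace relation, and one should check that $\psi$ is additive (a group homomorphism), which follows because $[\,\cdot\,]\col \End(\modC)\to\Tr(\modC)$ is additive on each $\End_\modC(x)$ and the trace relation is compatible with addition. It is also worth remarking — though not strictly needed — that by the material on additive closures in Section~\ref{sec:trac-line-bicat-1} combined with Proposition~\ref{r1}, one gets $\Tr(\modC)\cong\Tr(\modC^\oplus)\cong\Tr(\Kar(\modC^\oplus))$, which is the form used later; but the cleanest proof is the direct one above, writing down $\psi$ and verifying the two composites.
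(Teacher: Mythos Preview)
Your proof is correct and follows exactly the approach sketched in the paper: define the inverse $u$ (your $\psi$) by sending $[f\col(x,e)\to(x,e)]$ to $[f]\in\Tr(\modC)$ and verify it is a two-sided inverse. You have supplied the details the paper omits under ``one can check,'' and your cycling argument for the nontrivial composite is the right one.
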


\begin{proof}
  We can construct a map $u\col \Tr(\Kar(\modC ))\longrightarrow \Tr(\modC )$ such that, for
  $f\col (x,e)\longrightarrow (x,e)$ in $\Kar(\modC )$, $[f]\in \Tr(\Kar(\modC ))$ is mapped to
  $[f]\in \Tr(\modC )$. Then one can check that $u$ is an inverse to $\Tr(\iota)$.
\end{proof}

Given a linear bicategory $\CC$ we define the Karoubi envelope
$\Kar(\CC)$ as the linear category with $\Ob(\Kar(\CC)) = \Ob(\CC)$,
and for $x,y \in \Ob(\CC)$ the hom-categories are given by
$\Kar(\CC)(x,y) := \Kar( \CC(x,y) )$.  The composition functor
$\Kar(\CC)(y,z) \times \Kar(\CC)(x,y) \to \Kar(\CC)(x,z)$ is induced
by the universal property of the Karoubi envelope from the composition
functor in $\CC$.  The fully-faithful additive functors $\CC(x,y) \to
\Kar(\CC(x,y))$ combine to form an additive $2$-functor $\CC \to
\Kar(\CC)$ that is universal with respect to splitting idempotents in
the  hom-categories $\CC(x,y)$.

\subsection{Split Grothendieck groups and traces}
\label{sec:split-groth-groups}

For an additive category $\modC $, the {\em split Grothendieck group}
$K_0(\modC )$ of $\modC $ is the abelian group generated by the
isomorphism classes of objects of $\modC $ with relations $[x\oplus
y]_{\cong}=[x]_{\cong}+[y]_{\cong}$ for $x,y\in \Ob(\modC)$.  Here
$[x]_{\cong}$ denotes the isomorphism class of $x$.  The split
Grothendieck group $K_0$ is functorial.

Define a homomorphism
\begin{gather*}
  h_\modC \col K_0(\modC )\longrightarrow \Tr(\modC )
\end{gather*}
by
\begin{gather*}
  h_\modC ([x]_{\cong})=[1_x]
\end{gather*}
for $x\in \Ob(\modC )$.  Indeed, one can easily check that
\begin{gather*}
  h_\modC ([x\oplus y]_{\cong})=[1_x]+[1_y] \, .
\end{gather*}

The maps $h_\modC $ form a natural transformation
\begin{gather*}
  h\col K_0 \Rightarrow  \Tr \col \AdCat\rightarrow \Ab,
\end{gather*}
where $\AdCat$ denote the category of small additive category.

Given a linear bicategory $\CC$, we define the split Grothendieck
category $K_0(\CC)$ of $\CC$ as the linear category with
$\Ob(K_0(\CC)) = \Ob(\CC)$ and $K_0(\CC)(x,y) := K_0(\CC(x,y))$ for
any two objects $x,y \in \Ob(\CC)$.  For $[f]_{\cong} \in
\Ob(K_0(\CC)(x,y))$ and $[g]_{\cong} \in \Ob(K_0(\CC)(y,z)) $ the
composition in $K_0(\CC)$ is defined by $[g]_{\cong} \circ [f]_{\cong}
:= [g \circ f]_{\cong}$.

The homomorphisms $h_{\CC(x,y)}$ taken over all objects $x,y \in \Ob(\CC)$ give rise to a linear functor \begin{equation} \label{eq_chernchar}
h_{\CC} \col \K_0(\CC) \to \Tr(\CC)
\end{equation}
which is the identity map on objects and sends $K_0(\CC)(x,y) \to \Tr(\CC)(x,y)$ via the homomorphism $h_{\CC(x,y)}$.  It is easy to see that this assignment preserves composition since
\begin{align}
  h_{\CC}([g]_{\cong} \circ [f]_{\cong}) = h_{\CC}([g \circ f]_{\cong}) = [1_{g \circ f}]
   = [1_g \circ 1_f] = [1_g] \circ [1_f] = h_{\CC}([g]_{\cong}) \circ h_{\CC}([f]_{\cong}).
\end{align}

\subsection{Tools for computing the trace}
In this subsection we collect a few results which will be needed later.

\begin{prop}
\label{trdec}
Let $\C$ be a linear category and $B\subset\Ob(\C)$ such that every object $x$ of $\C$ is isomorphic to direct sum of elements of $B$. Let $\C|_B$ be the full
subcategory of $\C$ with $\Ob(\C|_B)=B$.
Then $\Tr(\C)$ is isomorphic to $\Tr(\C|_B)$.
\end{prop}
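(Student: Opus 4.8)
The plan is to construct mutually inverse linear maps between $\Tr(\C)$ and $\Tr(\C|_B)$ and check they are well-defined. The inclusion $j\col\C|_B\hookrightarrow\C$ is a linear functor, so it induces a homomorphism $\Tr(j)\col\Tr(\C|_B)\to\Tr(\C)$ sending $[f]$ to $[f]$ for $f$ an endomorphism of an object of $B$. The substance of the proof is to produce an inverse. Given an arbitrary object $x\in\Ob(\C)$, fix an isomorphism $a_x\col x\xrightarrow{\simeq}\bigoplus_{k} b_k$ with $b_k\in B$; for an endomorphism $f\col x\to x$, the conjugate $a_x f a_x^{-1}$ is an endomorphism of $\bigoplus_k b_k$ in $\C$, and I want to read off its ``diagonal blocks'' and sum their traces in $\Tr(\C|_B)$, mimicking the trace-of-additive-closure formula recalled earlier in the excerpt. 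Concretely I would define $r([f]) = \sum_k [\,p_k\,(a_x f a_x^{-1})\,\iota_k\,]$, where $\iota_k\col b_k\to\bigoplus b_k$ and $p_k\col\bigoplus b_k\to b_k$ are the inclusions and projections; note $p_k(a_xfa_x^{-1})\iota_k$ is an endomorphism of $b_k\in B$, hence gives a class in $\Tr(\C|_B)$.

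The key steps, in order: (1) Check $r$ is well-defined on $\Tr(\C)$, i.e.\ independent of the choice of isomorphism $a_x$ and of the chosen decomposition, and that it sends commutators $fg-gf$ to $0$. Independence of $a_x$ follows from the first bulleted fact in Section~2.1 (conjugation by an isomorphism does not change the trace class), applied inside $\C|_B$ after observing that a change of decomposition amounts to conjugating the block matrix by an invertible block matrix over $\C|_B$; and the commutator relation follows from the standard ``trace of a product of rectangular matrices'' manipulation, exactly as in the proof that $\tr$ inverts $\Tr(i)$ for additive closures. (2) Check $r\circ\Tr(j)=\mathrm{id}$: for $f$ an endomorphism of $b\in B$ we may take $a_b=1_b$ with the trivial one-term decomposition, so $r([f])=[f]$. (3) Check $\Tr(j)\circ r=\mathrm{id}$: for $f\col x\to x$, we have $[f]=[a_xfa_x^{-1}]$ in $\Tr(\C)$ by the conjugation fact, and then $[a_xfa_x^{-1}]=\sum_k[p_k(a_xfa_x^{-1})\iota_k]$ in $\Tr(\C)$ by the same rectangular-matrix trace computation (since $\sum_k\iota_k p_k=1_{\oplus b_k}$), which is precisely $\Tr(j)(r([f]))$.

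Alternatively, and perhaps more cleanly, one can deduce the proposition from results already in the excerpt: the additive closures $\C^{\oplus}$ and $(\C|_B)^{\oplus}$ are equivalent categories (every object of $\C^\oplus$ is isomorphic to a finite direct sum of objects of $B$, so the fully faithful functor $(\C|_B)^{\oplus}\to\C^{\oplus}$ is essentially surjective, hence an equivalence), equivalences induce isomorphisms on traces (third bulleted fact in Section~2.1, valid for linear categories), and $\Tr(\C)\cong\Tr(\C^{\oplus})$, $\Tr(\C|_B)\cong\Tr((\C|_B)^{\oplus})$ by the isomorphism $\Tr(i)$ discussed in the subsection on additive closures. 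Chaining these isomorphisms gives $\Tr(\C)\cong\Tr(\C|_B)$. I expect the main obstacle to be the bookkeeping in step~(1): verifying that $r$ respects the commutator relations and is independent of all choices requires the rectangular-matrix trace identity in a linear (not necessarily additive) category, so one must be careful that all composites written down actually land in $\C|_B$ or in $\C$ as appropriate and that no direct sums outside $\C^{\oplus}$ are implicitly used; the equivalence-of-additive-closures route sidesteps most of this, so I would present that as the main argument and relegate the explicit formula for $r$ to a remark.
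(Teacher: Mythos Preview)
Your ``alternative'' route is essentially the paper's proof: the paper observes that $\C$ is equivalent to $(\C|_B)^\oplus$ (via the full subcategory on objects that are direct sums of elements of $B$), then applies equivalence-invariance of $\Tr$ and the isomorphism $\Tr((\C|_B)^\oplus)\cong\Tr(\C|_B)$. Your version inserts one extra step by passing through $\C^\oplus$ rather than recognizing $\C\simeq(\C|_B)^\oplus$ directly, but this is a cosmetic difference.

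Your first approach---building an explicit inverse $r$ by choosing decompositions and summing diagonal blocks---is correct and would work, but it is considerably more laborious than necessary. The paper avoids all of that bookkeeping (independence of choices, commutator relations for rectangular matrices) by invoking the already-established facts about equivalences and additive closures. Since you yourself identify the equivalence-of-additive-closures argument as cleaner, I would simply present that as the proof and drop the explicit construction entirely rather than relegating it to a remark.
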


\begin{proof}
Let $S:=\{\bigoplus_ib_i\;|\;b_i\in B\}$, so $\C|_S$ is equivalent to ${\C|_B}^\oplus$.
Since every object of $\C$ is equivalent to an element of $S$, we have
$\C\simeq\C|_S\simeq{\C|_B}^\oplus$.  So,
$\Tr(\C)\simeq\Tr({\C_B}^\oplus)\simeq\Tr(\C|_B)$.
\end{proof}

\begin{prop}
\label{general}
Let $\C$ be a small linear category. Let $H:=\bigoplus_{x\in\Ob(\C)}\C(x,x)$,
and  let $K\subset H$ be a subgroup.
Assume that there is a homomorphism
 $\pi\col H\rightarrow K$  with the following properties:
\begin{enumerate}
\item $\pi$ is a projection,
\item $[\pi(f)]=[f]\in\Tr(\C)$ for every $f\in H$, and
\item $\pi(gh)=\pi(hg)$ for every $g\in\C(x,y)$ and $h\in\C(y,x)$ ($x,y\in\Ob(\C)$);
\end{enumerate}
then $\Tr(\C)$ is isomorphic to $K$.
\end{prop}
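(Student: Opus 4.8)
The plan is to show that $K$ with the restricted addition is isomorphic to $\Tr(\C)$ by exhibiting mutually inverse maps. First I would observe that the trace $\Tr(\C)$ is, by definition, the quotient $H/N$ where $N=\Span\{gh-hg\}$ ranges over all composable pairs $g\col x\to y$, $h\col y\to x$; let $q\col H\to\Tr(\C)$ denote the quotient map. The hypotheses give a projection $\pi\col H\to K$, so I would first check that $\pi$ descends to a well-defined map $\bar\pi\col \Tr(\C)\to K$. For this it suffices to see that $N\subseteq\ker\pi$: a generator $gh-hg$ is sent by $\pi$ to $\pi(gh)-\pi(hg)$, which vanishes by property~(3). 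Hence $\bar\pi\col \Tr(\C)\to K$ is a well-defined homomorphism with $\bar\pi\circ q=\pi$.

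Next I would produce the map in the other direction. Since $K$ is a subgroup of $H$, there is the inclusion $\iota\col K\hookrightarrow H$, and composing with the quotient gives $\psi:=q\circ\iota\col K\to\Tr(\C)$, explicitly $\psi(k)=[k]$. I would then verify that $\psi$ and $\bar\pi$ are mutually inverse. In one direction, for $k\in K$ we have $\bar\pi(\psi(k))=\bar\pi([k])=\pi(k)=k$, using that $\pi$ is a projection onto $K$ (property~(1)) and that $k$ already lies in $K$. In the other direction, for $[f]\in\Tr(\C)$ with $f\in H$ we have $\psi(\bar\pi([f]))=\psi(\pi(f))=[\pi(f)]=[f]$, where the last equality is exactly property~(2). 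Therefore $\bar\pi$ is an isomorphism $\Tr(\C)\xrightarrow{\cong}K$, which is the claim.

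The argument is essentially formal once the hypotheses are unpacked, so I do not anticipate a genuine obstacle; the only point requiring a little care is making sure that $\pi$ really factors through the trace, i.e.\ that $N\subseteq\ker\pi$, and here property~(3) must be applied to \emph{all} composable pairs $(g,h)$ with $g\col x\to y$ and $h\col y\to x$, including the case $x=y$, so that every generator of $N$ is killed. One should also note that $H$ as defined is the direct sum over $\Ob(\C)$ of the endomorphism groups, matching the presentation of $\Tr(\C)$ used earlier, so no identification of the ambient group is needed. With these remarks the proof is complete.
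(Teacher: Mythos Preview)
Your proof is correct and follows essentially the same approach as the paper's own proof: both construct the map $K\to\Tr(\C)$ via the inclusion followed by the quotient, the map $\Tr(\C)\to K$ by showing $\pi$ descends through the trace relations, and then verify these are mutually inverse using properties (1)--(3). Your version simply makes explicit the details the paper leaves as ``easy to check.''
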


\begin{proof}
  The inclusion $i\col K\hookrightarrow H$ induces a
  homomorphism $i^*\col K\rightarrow\Tr(\C)$, $k\mapsto[k]$.
  The map $\pi$ induces $\bar{\pi}\col \Tr(\C)\to K$.
  It is easy to check that $i^*$ and $\bar{\pi}$ are inverse to each
  other.  Hence $\Tr(\C)\cong K$.
\end{proof}

\section{The Hochschild--Mitchell homology of upper-triangular categories}

In this section, we introduce  (strongly) upper-triangular linear categories
and compute their Hochschild--Mitchell homology groups.

\subsection{Hochschild--Mitchell homology of linear categories}

Recall that for a ring $R$, the Hochschild homology $\HH_*(R)$ of $R$
can be defined as the homology of the Hochschild complex (see \cite{Mitchell}):
$$
C_*=C_*(R):\quad \quad \dots\longrightarrow C_n \xto{d_n} C_{n-1} \xto{d_{n-1}} \dots \xto{d_2} C_1 \xto{d_1} C_0 \longrightarrow 0,
$$
where $C_n(R)=R^{\otimes n+1}$ and
$$
d_n(a_0\otimes\dots\otimes
a_n):=\sum_{i=0}^{n-1}(-1)^ia_0\otimes\dots\otimes
a_ia_{i+1}\otimes\dots\otimes a_n+
(-1)^na_na_0\otimes a_1\otimes\dots\otimes a_{n-1}
$$
for $a_0,\ldots ,a_{n}\in R$.

The Hochschild--Mitchell homology of a linear category can be
similarly defined as follows.
Let $\modC $ be a small linear category.
Define the {\em Hochschild--Mitchell complex} of $\modC $
$$
C_*=C_*(\modC ):\quad \quad \dots\longrightarrow C_n \xto{d_n} C_{n-1} \xto{d_{n-1}} \dots \xto{d_2} C_1 \xto{d_1} C_0 \longrightarrow 0,
$$
where
$$
C_n=C_n(\modC ):=\bigoplus_{x_0,\dots,x_n\in\Ob(\C)}\C(x_n,x_0)\otimes\C(x_{n-1},x_n)\otimes\dots\otimes\C(x_0,x_1),
$$
$$
d_n( f _n\otimes f _{n-1}\otimes\dots\otimes f _0):=
\left(\sum_{i=0}^{n-1}(-1)^i f _n\otimes\dots\otimes f _{n-i} f _{n-i-1}\otimes\dots\otimes f _0\right)
+(-1)^n f _0 f _n\otimes f _{n-1}\otimes\dots\otimes f _1.
$$

The Hochschild--Mitchell homology $\HH_*(\modC )$ of $\modC $ is defined to
be the homology of the chain complex $C_*(\modC )$.
It is clear that $\HH_0(\modC )=C_0/d_1(C_1)$ is isomorphic to $\Tr(\modC )$.
Here we list some well-known properties, see \cite{Mitchell}.

\begin{lem} \hfill
  \label{r18}
  \begin{enumerate}
  \item An equivalence $\modC \simeq\modD $ of linear categories induces
    an isomorphism of the Hochschild--Mitchell homology $\HH_*(\modC )\cong
    \HH_*(\modD )$. (More generally, a Morita equivalence of linear categories
    induces an isomorphism on the Hochschild--Mitchell homology. We do not
    need this fact.)
  \item
    The inclusion functor $\modC \rightarrow \modC ^\oplus$ of a linear category $\modC $ into
    its additive closure $\modC ^\oplus$ induces an isomorphism
    $\HH_*(\C)\cong\HH_*(\C^{\oplus})$.
  \end{enumerate}
\end{lem}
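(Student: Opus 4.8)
The plan is to treat the two parts separately, each time reducing the statement to the construction of an explicit chain homotopy between Hochschild--Mitchell complexes; producing those homotopies is the only non-formal step, and both are classical (see \cite{Mitchell}).

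For part (1), I would begin by noting that every linear functor $F\colon\modC\to\modD$ induces a chain map $F_*\colon C_*(\modC)\to C_*(\modD)$ via $F_*(f_n\otimes\cdots\otimes f_0)=F(f_n)\otimes\cdots\otimes F(f_0)$; commutation with the differentials is immediate from the fact that $F$ preserves composition and is additive on hom-groups, while $(\mathrm{Id})_*=\mathrm{Id}$ and $(GF)_*=G_*F_*$ are clear, so $\HH_*$ is functorial on linear categories. It then suffices to show that a natural isomorphism $\sigma\colon F\Rightarrow G$ of linear functors $\modC\to\modD$ yields a chain homotopy $F_*\simeq G_*$: applied to an equivalence $F$ with quasi-inverse $G$, this gives $G_*F_*\simeq(\mathrm{Id})_*=\mathrm{Id}$ and $F_*G_*\simeq\mathrm{Id}$, so $F_*$ is a homotopy equivalence and hence an isomorphism on $\HH_*$. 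The homotopy $h_n\colon C_n(\modC)\to C_{n+1}(\modD)$ is the usual prism operator: on a cyclic chain $f_n\otimes\cdots\otimes f_0$ representing a loop $x_0\to x_1\to\cdots\to x_n\to x_0$, one takes the alternating sum over $j\in\{0,\dots,n\}$ of the tensors obtained by inserting the component $\sigma_{x_j}\in\modD(F(x_j),G(x_j))$ into the $j$-th slot, applying $G$ to the morphisms to its left and $F$ to those to its right. Checking $d_{n+1}h_n+h_{n-1}d_n=G_*-F_*$ is a termwise sign bookkeeping that uses only naturality, $\sigma_xF(f)=G(f)\sigma_x$; in degree $0$ it reduces to the identity $[G(f)]=[\sigma_xF(f)\sigma_x^{-1}]=[F(f)]$ underlying the fact, recorded above, that naturally isomorphic functors induce the same map on $\Tr$. (The identical argument for a $\modC$-bimodule gives the Morita-invariance statement noted parenthetically in the lemma, which we do not need.)

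For part (2), alongside the chain map $i_*\colon C_*(\modC)\to C_*(\modC^\oplus)$ induced by $i\colon\modC\to\modC^\oplus$ I would introduce the \emph{generalized trace} $\mathrm{tr}\colon C_*(\modC^\oplus)\to C_*(\modC)$: on a cyclic chain of matrices $F_n\otimes\cdots\otimes F_0$, where the $(b,a)$ entry $(F_t)_{ba}$ of $F_t$ is a morphism from the $a$-th summand of $X_t$ to the $b$-th summand of $X_{t+1}$, set $\mathrm{tr}(F_n\otimes\cdots\otimes F_0)=\sum_{a_0,\dots,a_n}(F_n)_{a_0 a_n}\otimes(F_{n-1})_{a_n a_{n-1}}\otimes\cdots\otimes(F_0)_{a_1 a_0}$, summed over all tuples of summand-indices forming a loop in $\modC$. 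One checks that $\mathrm{tr}$ is a chain map --- the several-objects refinement of $\mathrm{tr}(AB)=\mathrm{tr}(BA)$, whose degree-$0$ part is the inverse $\mathrm{tr}$ of $\Tr(i)$ recalled above (see also Lemma~\ref{r14}) --- and that $\mathrm{tr}\circ i_*=\mathrm{Id}_{C_*(\modC)}$. Degreewise this splits $C_*(\modC^\oplus)=i_*\big(C_*(\modC)\big)\oplus\ker(\mathrm{tr})$ into subcomplexes, so $\HH_*(\modC^\oplus)\cong\HH_*(\modC)\oplus H_*\big(\ker(\mathrm{tr})\big)$, and it remains to prove that $\ker(\mathrm{tr})$ is acyclic. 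In degree $0$ this is the already-established fact that $\Tr(i)$ is an isomorphism; in higher degrees one constructs a contracting homotopy on $\ker(\mathrm{tr})$ out of the inclusion and projection morphisms of the finitely many summands present in a given chain --- equivalently, block by block on the sizes that occur, this is the standard Morita invariance $\HH_*(M_N(R))\cong\HH_*(R)$.

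In both parts the main obstacle is the same: exhibiting the relevant homotopy --- the prism operator $h$ in part (1), the contraction of $\ker(\mathrm{tr})$ in part (2) --- and verifying the homotopy identity together with its signs. Conceptually neither is hard; the degree-$0$ cases are the trace computations already performed above, which display the pattern, and complete details are in \cite{Mitchell}.
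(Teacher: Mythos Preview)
Your proof sketch is correct and follows the standard approach. Note that the paper does not actually prove this lemma: it is stated as a list of ``well-known properties'' with only a citation to \cite{Mitchell}, so there is no proof in the paper to compare against. Your argument --- functoriality plus the prism homotopy for part~(1), and the generalized trace map with Morita-type acyclicity of the kernel for part~(2) --- is essentially what one finds in that reference and its descendants.
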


\subsection{Upper-triangularity of linear categories}
\label{sec:upper-triang-line}

Let us define a property of linear categories which allows us to compute all
their Hochschild--Mitchell homology groups.

\begin{defn}
  \label{r22}
  A small linear category $\modC $ is said to be {\em upper-triangular} if
  there is a partial order $\le$ on the set $\Ob(\modC )$ such that,
  for all $x,y\in\Ob(\modC)$, $\modC(x,y)\neq0$ implies $x\le y$.
\end{defn}

Note that $\modC$ is upper-triangular if and only if there is no sequence
\begin{gather*}
  x_0\xto{f_0}x_1\xto{f_1}\cdots \xto{f_{n-1}}x_n\xto{f_n}x_0 \quad (n\ge 1)
\end{gather*}
of nonzero morphisms in $\modC $ unless $x_0=x_1=\dots =x_n$.

\begin{lem}
  \label{r19}  For an upper-triangular linear category $\modC $, we have
  \begin{gather}
    \label{e5}
    \HH_*(\modC )
    \cong\bigoplus_{x\in \Ob(\modC )}\HH_*(\modC (x,x)).
  \end{gather}
\end{lem}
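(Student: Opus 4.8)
The plan is to exploit the upper-triangular structure to split the Hochschild--Mitchell complex $C_*(\modC)$ as a direct sum of subcomplexes, one of which computes $\bigoplus_x \HH_*(\modC(x,x))$ and the rest of which are acyclic. First I would observe that each chain group
\[
  C_n(\modC)=\bigoplus_{x_0,\dots,x_n\in\Ob(\modC)}\modC(x_n,x_0)\otimes\modC(x_{n-1},x_n)\otimes\dots\otimes\modC(x_0,x_1)
\]
already decomposes, after deleting summands that are forced to be zero, as a sum indexed by \emph{cyclic sequences} $(x_0,\dots,x_n)$ along which all the relevant hom-groups are nonzero. Upper-triangularity says $\modC(x_{i},x_{i+1})\neq 0\Rightarrow x_i\le x_{i+1}$ and $\modC(x_n,x_0)\neq 0\Rightarrow x_n\le x_0$, so a nonzero summand forces $x_0\le x_1\le\dots\le x_n\le x_0$, hence $x_0=x_1=\dots=x_n$. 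Therefore $C_n(\modC)=\bigoplus_{x\in\Ob(\modC)}\modC(x,x)^{\otimes n+1}=\bigoplus_{x}C_n(\modC(x,x))$.

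Next I would check that the differential $d_n$ respects this decomposition. Every term in the formula for $d_n(f_n\otimes\cdots\otimes f_0)$ is a tensor obtained either by composing two adjacent morphisms or by the cyclic ``$f_0f_n$'' term; in each case the resulting tensor lives over a cyclic subsequence of the original one, so if the input lies in the $x$-summand (all objects equal to $x$) then so does the output, and if the input lies in a genuinely mixed summand that summand is already zero. Hence $C_*(\modC)\cong\bigoplus_{x\in\Ob(\modC)}C_*(\modC(x,x))$ as complexes, where $\modC(x,x)$ is viewed as a linear category with one object (equivalently, a ring, or an $\Ab$-algebra). Taking homology commutes with direct sums, giving
\[
  \HH_*(\modC)\cong\bigoplus_{x\in\Ob(\modC)}\HH_*(\modC(x,x)),
\]
which is exactly \eqref{e5}; one should note that $\HH_*(\modC(x,x))$ computed via the one-object Hochschild--Mitchell complex agrees with the Hochschild homology of the endomorphism ring, which is the intended meaning of the right-hand side.

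The only subtlety — and the step I expect to need the most care — is the bookkeeping that justifies discarding the ``mixed'' summands: one must be sure that in the differential no cancellation occurs between the pure-$x$ part and a mixed part, i.e. that the splitting is by \emph{subcomplexes} and not merely a splitting of the graded pieces. This follows from the fact, noted above, that $d_n$ sends the $x$-summand into the $x$-summand and that every mixed summand of $C_*(\modC)$ is zero, so there is nothing to cancel against; the infinite direct sum over $\Ob(\modC)$ causes no trouble since homology commutes with arbitrary direct sums of complexes. Everything else is a routine unravelling of the definition of the Hochschild--Mitchell complex.
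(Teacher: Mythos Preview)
Your proof is correct and follows essentially the same approach as the paper's: both observe that upper-triangularity forces every nonzero summand of $C_n(\modC)$ to have $x_0=x_1=\cdots=x_n$, whence $C_*(\modC)\cong\bigoplus_{x}C_*(\modC(x,x))$ as complexes. Your version is simply more detailed about why the differential respects the splitting, a point the paper leaves implicit.
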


\begin{proof}
From the definition of upper-triangularity, we have, for
$x_0,\dots,x_n\in\Ob(\C)$, $n\ge 0$,
\begin{gather*}
  \C(x_n,x_0)\otimes\C(x_{n-1},x_n)\otimes\dots\otimes\C(x_0,x_1)=0
\end{gather*}
unless $x_0=x_1=\dots =x_n$.
Hence
\begin{gather*}
  C_n(\modC )=\bigoplus_{x\in \Ob(\modC )}\modC (x,x)^{\otimes n+1},
\end{gather*}
and the Hochschild--Mitchell complex of $\modC $ decomposes
as
\begin{gather*}
  C_*(\modC )\cong \bigoplus_{x\in \Ob(\modC )}C_*(\modC (x,x)).
\end{gather*}
Hence we have \eqref{e5}.
\end{proof}

\begin{rem}
  \label{r26}
  In the situation of Lemma \ref{r18},
  similar isomorphisms hold for the cyclic homology \cite{Loday}
  \begin{gather*}
  \operatorname{HC}_*(\modC )\cong\bigoplus_{x\in \Ob(\modC )}
\operatorname{HC}_*(\modC (x,x)).
  \end{gather*}
\end{rem}

\begin{defn}
  \label{r23}
A {\em strongly upper-triangular} linear category is an
upper-triangular linear category $\modC $ such that for all $x\in \Ob(\modC )$,
we have $\modC (x,x)\cong \modZ $.
\end{defn}

\begin{prop}
  \label{r20}
  For a strongly upper-triangular linear category $\modC $, we have
  \begin{gather*}
    \HH_i(\modC) \cong
    \begin{cases}
     \modZ \Ob(\modC )& \text{if $i=0$},\\
     0& \text{if $i>0$}.
    \end{cases}
  \end{gather*}
  Here $\modZ \Ob(\modC )$ denotes the free $\modZ $-module spanned by $\Ob(\modC )$.
\end{prop}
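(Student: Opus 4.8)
The plan is to reduce everything to the previous lemma plus a single Hochschild-homology computation for $\modZ$. By Lemma~\ref{r19}, since a strongly upper-triangular category $\modC$ is in particular upper-triangular, we have
\begin{gather*}
  \HH_*(\modC)\cong\bigoplus_{x\in\Ob(\modC)}\HH_*(\modC(x,x)).
\end{gather*}
Now for each $x\in\Ob(\modC)$ the hom-ring $\modC(x,x)$ is isomorphic to $\modZ$ as a ring (the isomorphism $\modC(x,x)\cong\modZ$ from Definition~\ref{r23} is an isomorphism of unital rings, since $1_x$ is the unit and composition is the multiplication), so $\HH_*(\modC(x,x))\cong\HH_*(\modZ)$.

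The key step is therefore to compute $\HH_*(\modZ)$, the Hochschild homology of $\modZ$ regarded as an algebra over $\modZ$. This is standard: $\HH_0(\modZ)=\modZ$ and $\HH_i(\modZ)=0$ for $i>0$. One can see this directly from the Hochschild complex: $C_n=\modZ^{\otimes n+1}\cong\modZ$ with all tensor products over $\modZ$, and under these identifications the differential $d_n\col C_n\to C_{n-1}$ is multiplication by $\sum_{i=0}^{n-1}(-1)^i+(-1)^n$, which equals $0$ when $n$ is odd and $1$ when $n$ is even and positive; hence the complex looks like $\cdots\to\modZ\xto{1}\modZ\xto{0}\modZ\xto{1}\modZ\xto{0}\modZ\to 0$, whose homology is $\modZ$ in degree $0$ and $0$ elsewhere. (Alternatively, invoke the fact that a commutative ring which is smooth/projective over the base has vanishing higher Hochschild homology, or simply note $\modZ$ is the ground ring so $C_*(\modZ)$ is a resolution-type complex; the explicit computation above is quickest and fully elementary.)

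Combining the two steps: for $i>0$ each summand $\HH_i(\modC(x,x))\cong\HH_i(\modZ)=0$, so $\HH_i(\modC)=0$; and for $i=0$,
\begin{gather*}
  \HH_0(\modC)\cong\bigoplus_{x\in\Ob(\modC)}\HH_0(\modZ)\cong\bigoplus_{x\in\Ob(\modC)}\modZ=\modZ\,\Ob(\modC),
\end{gather*}
the free $\modZ$-module on $\Ob(\modC)$, as claimed. (This is also consistent with $\HH_0(\modC)\cong\Tr(\modC)$ noted earlier, and with the observation that $h_{\modC}$ sends each class $[1_x]$ to a basis element.)

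I do not anticipate a serious obstacle here; the only point requiring a little care is to make sure the ring isomorphism $\modC(x,x)\cong\modZ$ is used as an isomorphism of unital rings (so that Hochschild homology is preserved) rather than merely of abelian groups, and to carry out the alternating-sum bookkeeping in the differential of $C_*(\modZ)$ correctly. Both are routine, so the proof is short.
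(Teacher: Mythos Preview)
Your proof is correct and follows essentially the same approach as the paper: invoke Lemma~\ref{r19} to decompose $\HH_*(\modC)$ as a direct sum over objects, then use $\modC(x,x)\cong\modZ$ and the computation of $\HH_*(\modZ)$. The paper's proof simply states the values of $\HH_i(\modZ)$ without the explicit complex calculation you provide, so your version is a slightly expanded form of theirs.
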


\begin{proof}
  The result follows from Lemma \ref{r19} and
  \begin{gather*}
    \HH_i(\Z) \cong
    \begin{cases}
      \Z& \text{if $i=0$},\\
      0& \text{if $i>0$}.
    \end{cases}
  \end{gather*}
\end{proof}

\subsection{Additive categories with strongly upper-triangular bases}

Let $\modC $ be an additive category, and let $B\subset \Ob(\modC )$ be a subset.
Denote by $\modC |_B$ the full subcategory of $\modC $ with
$\Ob(\modC |_B)=B$.
The set $B$ is called a {\em strongly upper-triangular basis} of
$\modC $ if the following two conditions hold.
\begin{enumerate}
\item The inclusion functor $\modC |_B\rightarrow \modC $ induces equivalence of additive
  categories $(\modC |_B)^\oplus\simeq \modC $.
\item $\modC |_B$ is strongly upper-triangular.
\end{enumerate}

\begin{prop}
  \label{r24}
  For an additive category $\modC $ with a strongly upper-triangular
  basis $B$, we have
\begin{gather*}
  \HH_i(\modC)\cong
  \begin{cases}
    \Z B&\text{if $i=0$},\\
    0&\text{if $i>0$}.
  \end{cases}
\end{gather*}

\end{prop}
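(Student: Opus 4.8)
The plan is to reduce Proposition \ref{r24} to the already-established Proposition \ref{r20} by invoking the two parts of Lemma \ref{r18}. Recall that $B$ being a strongly upper-triangular basis means precisely that $(\modC|_B)^\oplus \simeq \modC$ as additive categories and that $\modC|_B$ is strongly upper-triangular.

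First I would apply Lemma \ref{r18}(1): the equivalence of linear categories $(\modC|_B)^\oplus \simeq \modC$ induces an isomorphism $\HH_*(\modC) \cong \HH_*((\modC|_B)^\oplus)$. Next I would apply Lemma \ref{r18}(2) to the linear category $\modC|_B$: the inclusion $\modC|_B \to (\modC|_B)^\oplus$ induces an isomorphism $\HH_*(\modC|_B) \cong \HH_*((\modC|_B)^\oplus)$. Chaining these two isomorphisms gives $\HH_*(\modC) \cong \HH_*(\modC|_B)$. Finally, since $\modC|_B$ is strongly upper-triangular, Proposition \ref{r20} computes $\HH_i(\modC|_B)$ to be $\modZ\,\Ob(\modC|_B) = \modZ B$ when $i=0$ and $0$ when $i>0$. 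Transporting along the isomorphism yields the claimed values of $\HH_i(\modC)$.

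There is essentially no obstacle here: the proposition is a routine corollary assembled from Lemma \ref{r18} and Proposition \ref{r20}, and the only thing to be careful about is keeping track of which functor each isomorphism is induced by (the equivalence for one step, the additive-closure inclusion for the other) and noting that $\Ob(\modC|_B) = B$. So the whole proof is just: combine Lemma \ref{r18}(1) and (2) to get $\HH_*(\modC) \cong \HH_*(\modC|_B)$, then apply Proposition \ref{r20}.

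\begin{proof}
  By the definition of a strongly upper-triangular basis, the inclusion functor induces an equivalence of additive categories $(\modC|_B)^\oplus \simeq \modC$, so by Lemma \ref{r18}(1) we have $\HH_*(\modC) \cong \HH_*((\modC|_B)^\oplus)$.  On the other hand, by Lemma \ref{r18}(2) applied to the linear category $\modC|_B$, the inclusion $\modC|_B \to (\modC|_B)^\oplus$ induces an isomorphism $\HH_*(\modC|_B) \cong \HH_*((\modC|_B)^\oplus)$.  Combining these, we obtain $\HH_*(\modC) \cong \HH_*(\modC|_B)$.  Since $\modC|_B$ is strongly upper-triangular with $\Ob(\modC|_B) = B$, Proposition \ref{r20} gives $\HH_0(\modC|_B) \cong \modZ B$ and $\HH_i(\modC|_B) = 0$ for $i>0$.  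The result follows.
\end{proof}
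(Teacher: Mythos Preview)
Your proof is correct and follows essentially the same approach as the paper: reduce to $\HH_*(\modC|_B)$ via Lemma \ref{r18} and then apply Proposition \ref{r20}. You are simply more explicit than the paper about invoking both parts of Lemma \ref{r18} separately.
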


\begin{proof}
Because of Lemma \ref{r18} and the definition of the strongly
upper-triangular basis, the Hochschild--Mitchell homology of the category
$\modC$ is the same as that of the category $\modC |_B$.  Proposition
\ref{r20} concludes the proof.
\end{proof}

\section{The $2$-categories $\Ucat$, $\U^*$ and their Karoubi envelopes}
\label{sec:2-categories-ucatd}

In this section we define $2$-categories $\Ucat$ and $\U^*$ as well as
their Karoubi envelopes $\dotU$ and $\dotU^*$.
We recall some definitions and results of the ``thick calculus''
developed in \cite{KLMS}.

\subsection{The algebras $\bfU$, $_{\mathcal{A}}{\mathbf{U}}$ and $\UA$}
 The quantum group $\bfU=\bfU_q(\fsl_2)$ is the unital, associative algebra over the
 rational function field $\Q(q)$ with generators $E$, $F$, $K$ and
 $K^{-1}$ subject to the relations
 \begin{gather*}
 KK^{-1}=K^{-1}K=1,\quad  KE=q^2 EK,\quad KF =q^{-2}FK,\quad
 EF-FE=\frac{K-K^{-1}}{q-q^{-1}}.
 \end{gather*}
For $\calA=\Z[q,q^{-1}]$,
the integral form ${}_\calA\bfU={}_\calA\bfU(\fsl_2)$ is the $\calA$-subalgebra of $\bfU$,
  generated by $K,K^{-1}$ and the divided
 powers $E^{(k)}=E^k/[k]!$ and $F^{(k)}=F^k/[k]!$ for $k\ge1$, where
 $[k]!=\prod_{i=1}^k\frac{q^i-q^{-i}}{q-q^{-1}}$.

 The idempotented version $\UA=\UA(\fsl_2)$ of $_{\mathcal{A}}{\mathbf{U}}$,
 introduced in \cite{BLM}, can be obtained from ${}_\calA\bfU$ by
 replacing the unit with a collection of mutually orthogonal
 idempotents $1_n$ for $n\in \Z$, such that
 $$K1_n=1_nK=q^n 1_n, \quad E^{(k)}1_n=1_{n+2k}E^{(k)},\quad
 F^{(k)}1_n=1_{n-2k}F^{(k)}\, .$$
One may regard $\UA$ both as a
 non-unital algebra and as a linear category.  In the latter case, we
 regard $\UA$ as a linear category with objects in $\Z$ such that the
 abelian group of morphisms from $m\in\Z$ to $n\in\Z$ is the
 $\calA$-submodule of the non-unital ring $\UA$ consisting of the
elements $1_nx1_m$ for $x\in {}_\calA\bfU$.

The defining relations for the algebra $\UA$ are
\begin{equation}
 \label{lubp}
 E^{(a)}E^{(b)}1_n=\left[ \begin{array}{c}a+b\\b\end{array}\right]E^{(a+b)}1_n,
\end{equation}
\begin{equation}
 F^{(a)}F^{(b)}1_n=\left[ \begin{array}{c}a+b\\b\end{array}\right]F^{(a+b)}1_n,
\end{equation}
\begin{equation}
 E^{(a)}F^{(b)}1_n=\sum_{j=0}^{\min(a,b)}\left[ \begin{array}{c}a-b+n\\j\end{array}\right]F^{(b-j)}E^{(a-j)}1_n,\quad\text{for }n\geq b-a,
\end{equation}
\begin{equation}
 \label{lubz}
 F^{(b)}E^{(a)}1_n=\sum_{j=0}^{\min(a,b)}\left[ \begin{array}{c}b-a-n\\j\end{array}\right]E^{(a-j)}F^{(b-j)}1_n,\quad\text{for }n\leq b-a\, .
\end{equation}

\subsection{The $2$-category $\Ucat$}
The $2$-category $\Ucat=\Ucat(\mf{sl}_2)$ is the additive $2$-category
consisting of
\begin{itemize}
\item objects $n$ for $n \in \Z$,
\item for a signed sequence $\underline{\epsilon}=(\epsilon_1,\epsilon_2, \dots, \epsilon_m)$, with $\epsilon_1,\dots,\epsilon_m \in \{+,-\}$, define
\[\cal{E}_{\underline{\epsilon}}:= \cal{E}_{\epsilon_1} \cal{E}_{\epsilon_2} \dots \cal{E}_{\epsilon_m}
\]
where $\cal{E}_{+}:=\cal{E}$ and $\cal{E}_{-}:= \cal{F}$.  A
$1$-morphisms from $n$ to $n'$ is a formal finite direct sum of strings
\[
 \cal{E}_{\underline{\epsilon}}\onen \la t\ra =
  \onenn{n'}\cal{E}_{\underline{\epsilon}}\onen \la t\ra
\]
for any $t \in \Z$ and signed sequence $\underline{\epsilon}$ such that $n'=n+ 2\sum_{j=1}^m\epsilon_j1$.
\item $2$-morphisms are $\Z$-modules spanned by (vertical and horizontal) composites of identity $2$-morphisms and the following tangle-like diagrams
\begin{align}
  \xy 0;/r.17pc/:
 (0,7);(0,-7); **\dir{-} ?(.75)*\dir{>};
 (0,0)*{\bullet};
 (7,3)*{ \scs n};
 (-9,3)*{\scs n+2};
 (-10,0)*{};(10,0)*{};
 \endxy &\maps \cal{E}\onen\la t \ra \to \cal{E}\onen\la t+2 \ra  & \quad
 &
    \xy 0;/r.17pc/:
 (0,7);(0,-7); **\dir{-} ?(.75)*\dir{<};
 (0,0)*{\bullet};
 (7,3)*{ \scs n};
 (-9,3)*{\scs  n-2};
 (-10,0)*{};(10,0)*{};
 \endxy\maps \cal{F}\onen\la t \ra \to \cal{F}\onen\la t+2 \ra  \nn \\
   & & & \nn \\
   \xy 0;/r.17pc/:
  (0,0)*{\xybox{
    (-4,-4)*{};(4,4)*{} **\crv{(-4,-1) & (4,1)}?(1)*\dir{>} ;
    (4,-4)*{};(-4,4)*{} **\crv{(4,-1) & (-4,1)}?(1)*\dir{>};
     (9,1)*{\scs  n};
     (-10,0)*{};(10,0)*{};
     }};
  \endxy \;\;&\maps \cal{E}\cal{E}\onen\la t \ra  \to \cal{E}\cal{E}\onen\la t-2 \ra  &
  &
   \xy 0;/r.17pc/:
  (0,0)*{\xybox{
    (-4,4)*{};(4,-4)*{} **\crv{(-4,1) & (4,-1)}?(1)*\dir{>} ;
    (4,4)*{};(-4,-4)*{} **\crv{(4,1) & (-4,-1)}?(1)*\dir{>};
     (9,1)*{\scs  n};
     (-10,0)*{};(10,0)*{};
     }};
  \endxy\;\; \maps \cal{F}\cal{F}\onen\la t \ra  \to \cal{F}\cal{F}\onen\la t-2 \ra  \nn \\
  & & & \nn \\
     \xy 0;/r.17pc/:
    (0,-3)*{\bbpef{}};
    (8,-5)*{\scs  n};
    (-10,0)*{};(10,0)*{};
    \endxy &\maps \onen\la t \ra  \to \cal{F}\cal{E}\onen\la t+1+n \ra   &
    &
   \xy 0;/r.17pc/:
    (0,-3)*{\bbpfe{}};
    (8,-5)*{\scs n};
    (-10,0)*{};(10,0)*{};
    \endxy \maps \onen\la t \ra  \to\cal{E}\cal{F}\onen\la t+1-n\ra  \nn \\
      & & & \nn \\
  \xy 0;/r.17pc/:
    (0,0)*{\bbcef{}};
    (8,4)*{\scs  n};
    (-10,0)*{};(10,0)*{};
    \endxy & \maps \cal{F}\cal{E}\onen\la t \ra \to\onen\la t+1+n \ra  &
    &
 \xy 0;/r.17pc/:
    (0,0)*{\bbcfe{}};
    (8,4)*{\scs  n};
    (-10,0)*{};(10,0)*{};
    \endxy \maps\cal{E}\cal{F}\onen\la t \ra  \to\onen\la t+1-n \ra \nn
\end{align}
for every $n,t\in\modZ$. The degree of a $2$-morphism is the difference between  degrees of the target and  the source.
\end{itemize}

Diagrams are read from right to left and bottom to top.  The rightmost region
in our diagrams is usually colored by $n$.
The identity $2$-morphism of the $1$-morphism $\cal{E} \onen$ is represented by an upward oriented line (likewise, the identity $2$-morphism of $\cal{F} \onen$ is represented by a downward oriented line).

The $2$-morphisms satisfy the following relations
 (see \cite{Lau1} for more details).
\begin{enumerate}
\item \label{item_cycbiadjoint} The $1$-morphisms $\cal{E} \onen$ and $\cal{F} \onen$ are biadjoint (up to a specified degree shift). Moreover, the $2$-morphisms are cyclic with respect to this biadjoint structure.

\item The $\cal{E}$'s carry an action of the nilHecke algebra. Using the adjoint structure this induces an action of the nilHecke algebra on the $\cal{F}$'s.

\item \label{item_positivity} Dotted bubbles of negative degree are zero, so that
for all $m \geq 0$ one has
\begin{equation} \label{eq_positivity_bubbles}
\xy 0;/r.18pc/:
 (-12,0)*{\cbub{m}{}};
 (-8,8)*{n};
 \endxy
  = 0
 \qquad  \text{if $m< n-1$,} \qquad \xy 0;/r.18pc/: (-12,0)*{\ccbub{m}{}};
 (-8,8)*{n};
 \endxy = 0\quad
  \text{if $m< -n -1$.}
\end{equation}
Dotted bubble of degree $0$ are equal to the identity $2$-morphism:
\[
\xy 0;/r.18pc/:
 (0,0)*{\cbub{n-1}};
  (4,8)*{n};
 \endxy
  =  \Id_{\onen} \quad \text{for $n \geq 1$,}
  \qquad \quad
  \xy 0;/r.18pc/:
 (0,0)*{\ccbub{-n-1}};
  (4,8)*{n};
 \endxy  =  \Id_{\onen} \quad \text{for $n \leq -1$.}\]
\end{enumerate}

We use the following notation for the
 dotted  bubbles:
\begin{equation*}
 \xy
(0,0)*{\lbub};
 (5,4)*{n};
 (6,0)*{};
 (5,-2)*{\scriptstyle m};
\endxy := \xy
 (5,4)*{n};
(0,0)*{\lbbub};
 (9,-2)*{\scriptstyle m+n-1};
\endxy,
\quad\quad
 \xy
(0,0)*{\rbub};
 (5,4)*{n};
 (6,0)*{};
 (5,-2)*{\scriptstyle m};
\endxy := \xy
 (5,4)*{n};
(0,0)*{\rbbub};
 (9,-2)*{\scriptstyle m-n-1};
\endxy,
\end{equation*}
so that
$$\deg\left( \xy
(0,0)*{\lbub};
 (5,4)*{n};
 (6,0)*{};
 (5,-2)*{\scriptstyle m};
\endxy\right)= \deg\left(\xy
(0,0)*{\rbub};
 (5,4)*{n};
 (6,0)*{};
 (5,-2)*{\scriptstyle m};
\endxy\right)=2m.$$
We call a clockwise (resp. counterclockwise) bubble  fake
if $m+n-1<0$ and (resp. if $m-n-1<0$). The fake bubbles are defined recursively by
the homogeneous terms of the equation
\begin{equation}
\label{gras}
 \left(
\xy
(0,0)*{\rbub};
 (5,4)*{n};
 (7,0)*{};
 (5,-2)*{\scriptstyle 0};
\endxy+\xy
(0,0)*{\rbub};
 (5,4)*{n};
 (7,0)*{};
 (5,-2)*{\scriptstyle 1};
\endxy t+\dots+\xy
(0,0)*{\rbub};
 (5,4)*{n};
 (7,0)*{};
 (5,-2)*{\scriptstyle j};
\endxy t^j+\cdots
\right)\left(
\xy
(0,0)*{\lbub};
 (5,4)*{n};
 (7,0)*{};
 (5,-2)*{\scriptstyle 0};
\endxy+\xy
(0,0)*{\lbub};
 (5,4)*{n};
 (7,0)*{};
 (5,-2)*{\scriptstyle 1};
\endxy t+\dots+\xy
(0,0)*{\lbub};
 (5,4)*{n};
 (7,0)*{};
 (5,-2)*{\scriptstyle j};
\endxy t^j+\cdots
\right)=\Id_{\onen}
\end{equation}
and the additional condition
\begin{equation*}
 \xy
(0,0)*{\lbub};
 (5,4)*{n};
 (6,0)*{};
 (5,-2)*{\scriptstyle 0};
\endxy=\xy
(0,0)*{\rbub};
 (5,4)*{n};
 (6,0)*{};
 (5,-2)*{\scriptstyle 0};
\endxy=\Id_{\onen}.
\end{equation*}
One can check that relation \eqref{gras} holds  also for the real bubbles.
So we will not distinguish between them in what follows.

\begin{enumerate}
\setcounter{enumi}{3}
\item There are additional relations:
\begin{equation}
\label{rei1}
\xy
(0,-8)*{};(0,8)*{} **\crv{(0,4) & (6,4) & (6,-4) & (0,-4)}?(1)*\dir{>};
(6,6)*{n};
(-3,0)*{};(8,0)*{};
\endxy = - \sum_{f_1+f_2=-n}\xy
 (0,-8)*{};(0,8)*{} **\dir{-} ?(1)*\dir{>} ?(.5)*{\bullet};
(-2,2)*{\scriptstyle f_1};
 (8,0)*{\lbub};
 (13,-2)*{\scriptstyle f_2};
(6,6)*{n};
 (-3,0)*{};(15,0)*{};
\endxy\, ,\quad\quad\xy
(0,-8)*{};(0,8)*{} **\crv{(0,4) & (-6,4) & (-6,-4) & (0,-4)}?(1)*\dir{>};
(-6,6)*{n};
(3,0)*{};(-8,0)*{};
\endxy = \sum_{f_1+f_2=n}\xy
 (0,-8)*{};(0,8)*{} **\dir{-} ?(1)*\dir{>} ?(.5)*{\bullet};
(2,2)*{\scriptstyle f_1};
 (-8,0)*{\rbub};
 (-3,-2)*{\scriptstyle f_2};
(-6,6)*{n};
 (3,0)*{};(-13,0)*{};
\endxy\, ,
\end{equation}
\begin{equation}
\label{rei2}
\xy
{\ar (0,-8)*{}; (0,8)*{}};
(-3,0)*{};(3,0)*{};
\endxy\xy
{\ar (0,8)*{}; (0,-8)*{}};
(4,4)*{n};
(-3,0)*{};(6,0)*{};
\endxy = -\xy
(-3,-8)*{};(-3,8)*{} **\crv{(-3,-4) & (3,-4) & (3,4) & (-3,4)}?(1)*\dir{>};
(3,-8)*{};(3,8)*{} **\crv{(3,-4) & (-3,-4) & (-3,4) & (3,4)}?(0)*\dir{<};
(6,4)*{n};
(-7,0)*{};(8,0)*{};
\endxy + \sum_{f_1+f_2+f_3=n-1}\xy
 (3,9)*{};(-3,9)*{} **\crv{(3,4) & (-3,4)} ?(1)*\dir{>} ?(.2)*{\bullet};
(5,6)*{\scriptstyle f_1};
 (0,0)*{\rbub};
(5,-2)*{\scriptstyle f_2};
 (-3,-9)*{};(3,-9)*{} **\crv{(-3,-4) & (3,-4)} ?(1)*\dir{>}?(.8)*{\bullet};
(5,-6)*{\scriptstyle f_3};
(-6,4)*{n};
 (-8,0)*{};(8,0)*{};
\endxy\, ,
\end{equation}
\begin{equation*}
\xy
{\ar (0,8)*{}; (0,-8)*{}};
(-3,0)*{};(3,0)*{};
\endxy\xy
{\ar (0,-8)*{}; (0,8)*{}};
(4,4)*{n};
(-3,0)*{};(6,0)*{};
\endxy = -\xy
(-3,-8)*{};(-3,8)*{} **\crv{(-3,-4) & (3,-4) & (3,4) & (-3,4)}?(0)*\dir{<};
(3,-8)*{};(3,8)*{} **\crv{(3,-4) & (-3,-4) & (-3,4) & (3,4)}?(1)*\dir{>};
(6,4)*{n};
(-7,0)*{};(8,0)*{};
\endxy + \sum_{f_1+f_2+f_3=-n-1}\xy
 (3,9)*{};(-3,9)*{} **\crv{(3,4) & (-3,4)} ?(0)*\dir{<} ?(.2)*{\bullet};
(5,6)*{\scriptstyle f_1};
 (0,0)*{\lbub};
(5,-2)*{\scriptstyle f_2};
 (-3,-9)*{};(3,-9)*{} **\crv{(-3,-4) & (3,-4)} ?(0)*\dir{<}?(.8)*{\bullet};
(5,-6)*{\scriptstyle f_3};
(-6,4)*{n};
 (-8,0)*{};(8,0)*{};
\endxy\, .
\end{equation*}
\end{enumerate}

\subsection{The $2$-category $\UcatD$}

The $2$-category $\UcatD$ is defined as $\Kar(\Ucat)$, meaning that
the categories $\Ucat(m,n)$ are replaced by their Karoubi envelopes.

In $\Ucat(m,n)$, for any $a,b\ge0$, there are additional  objects
 $\cal{E}^{(a)}\onen$ and $\cal{F}^{(b)}\onen$ defined by
\begin{equation}
\label{defEa}
 \E^{(a)}\onen\la t\ra := \left(\E^a\onen\left\la t-\frac{a(a-1)}{2}\right\ra,\idemp_a\right) =:\xy
 (0,8);(0,-8); **[grey][|(4)]\dir{-} ?(1)*[grey][|(3)]\dir{>};
 (5,0)*{n};
 (-8,0)*{n+2a};
 (2,-7)*{\scriptstyle a};
 (8,0)*{};
 (-15,0)*{};
\endxy
\end{equation}
\begin{equation*}
 \F^{(a)}\onen\la t\ra := \left(\F^a\onen\left\la t+\frac{a(a-1)}{2}\right\ra,{\idemp}'_a\right) =: \xy
 (0,-8);(0,8); **[grey][|(4)]\dir{-} ?(1)*[grey][|(3)]\dir{>};
 (5,0)*{n};
 (-8,0)*{n-2a};
 (2,7)*{\scriptstyle a};
 (8,0)*{};
 (-15,0)*{};
\endxy
\end{equation*}
where the idempotent $\idemp_a$ is defined as follows
\begin{equation*}
 \idemp_a := \delta_aD_a=\xy
(-18,-4)*{}; (9.5,-4)*{} **\dir{-};
(9.5,-4)*{}; (9.5,2)*{} **\dir{-};
(9.5,2)*{}; (-18,2)*{} **\dir{-};
(-18,2)*{}; (-18,-4)*{} **\dir{-};
(-7.5,-4)*{}; (-7.5,-8)*{} **\dir{-};
(-16,-4)*{}; (-16,-8)*{} **\dir{-};
(2.5,-4)*{}; (2.5,-8)*{} **\dir{-};
(7.5,-4)*{}; (7.5,-8)*{} **\dir{-};
{\ar (-16,2)*{}; (-16,8)*{}};
{\ar (-7.5,2)*{}; (-7.5,8)*{}};
{\ar (2.5,2)*{}; (2.5,8)*{}};
{\ar (7.5,2)*{}; (7.5,8)*{}};
(-7.5,5)*{\bullet};
(-16,5)*{\bullet};
(-11.5,6)*{\scriptstyle a-2};
(-20,6)*{\scriptstyle a-1};
(2.5,5)*{\bullet};
(-2.5,-6)*{\dots}; (-2.5,5)*{\dots};
(-4.25,-1)*{D_a};
(-19.5,0)*{};(11,0)*{};
\endxy.
\end{equation*}
Here $D_a$ is the longest braid on $a$-strands. The idempotents
${\idemp}'_a$ are obtained from $\idemp_a$ by a $180^\circ$ rotation.
We have $\cal{E}^a\onen \cong \oplus_{[a]!} \cal{E}^{(a)}\onen$ and
 $\cal{F}^{(b)}\onen \cong \oplus_{[b]!} \cal{F}^{(b)}\onen$.
Here we use the standard notation
 $$ [a] := \frac{q^a-q^{-a}}{q-q^{-1}}=q^{a-1}+q^{a-3}+\dots+q^{-a+1},\;\;
[a]!=\prod^a_{i=1} [i],
\quad \text{and}\quad
\left[ \begin{array}{c}a\\b\end{array}\right]=\frac{[a]!}{[b]![a-b]!}\, .$$

We define here some additional $2$-morphisms in $\UcatD$, whose
 degrees can be read from the shift on the right-hand side.
\begin{equation*}
\xy
 (0,-8);(0,0) **[grey][|(4)]\dir{-} ?(.6)*[grey][|(3)]\dir{>};
 (0,0);(-4,8)*{} **[grey][|(4)]\crv{(-4,1)} ?(.8)*[grey][|(3)]\dir{>};
 (0,0);(4,8)*{} **[grey][|(4)]\crv{(4,1)} ?(.8)*[grey][|(3)]\dir{>};
 (-2,8)*{\scriptstyle a};
 (6,8)*{\scriptstyle b};
 (7,-2)*{n};
 (4,-8)*{\scriptstyle a+b};
 (-8,0)*{}; (8,0)*{};
\endxy := \xy
(-4,-8);(4,-1)*{} **\crv{(-4,-4.5) & (4,-4.5)};
(4,-8);(-4,-1)*{} **\crv{(4,-4.5) & (-4,-4.5)};
(-1,-1)*{}; (-7,-1)*{} **\dir{-};
(-7,-1)*{}; (-7,5)*{} **\dir{-};
(-7,5)*{}; (-1,5)*{} **\dir{-};
(-1,5)*{}; (-1,-1)*{} **\dir{-};
(1,-1)*{}; (7,-1)*{} **\dir{-};
(7,-1)*{}; (7,5)*{} **\dir{-};
(7,5)*{}; (1,5)*{} **\dir{-};
(1,5)*{}; (1,-1)*{} **\dir{-};
{\ar (-4,5)*{}; (-4,8)*{}};
{\ar (4,5)*{}; (4,8)*{}};
(-4,2)*{\idemp_a};
(4,2)*{\idemp_b};
(-2,-8)*{\scriptstyle b};
(6,-8)*{\scriptstyle a};
 (7,-4)*{n};
(-9,0)*{};(9,0)*{};
\endxy:\E^{(a+b)}\onen\rightarrow\E^{(a)}\E^{(b)}\onen\la -ab \ra,
\end{equation*}
\begin{equation*}
\xy
 (0,0);(0,8) **[grey][|(4)]\dir{-} ?(.6)*[grey][|(3)]\dir{>};
 (-4,-8);(0,0)*{} **[grey][|(4)]\crv{(-4,-1)} ?(.3)*[grey][|(3)]\dir{>};
 (4,-8);(0,0)*{} **[grey][|(4)]\crv{(4,-1)} ?(.3)*[grey][|(3)]\dir{>};
 (-2,-8)*{\scriptstyle a};
 (6,-8)*{\scriptstyle b};
 (7,2)*{n};
 (4,8)*{\scriptstyle a+b};
 (-8,0)*{}; (8,0)*{};
\endxy := \xy
(-4,-8);(-4,-3)*{} **\dir{-};
(4,-8);(4,-3)*{} **\dir{-};
(-6,-3)*{}; (-6,3)*{} **\dir{-};
(-6,3)*{}; (6,3)*{} **\dir{-};
(6,3)*{}; (6,-3)*{} **\dir{-};
(6,-3)*{}; (-6,-3)*{} **\dir{-};
{\ar (0,3)*{}; (0,8)*{}};
(0,0)*{\idemp_{a+b}};
(-2,-8)*{\scriptstyle a};
(6,-8)*{\scriptstyle b};
 (7,6)*{n};
(-8,0)*{};(8,0)*{};
\endxy:\E^{(a)}\E^{(b)}\onen\rightarrow\E^{(a+b)}\onen\la -ab\ra,
\end{equation*}
\begin{equation*}
\xy
 (0,-8);(0,0) **[grey][|(4)]\dir{-} ?(.4)*[grey][|(3)]\dir{<};
 (0,0);(-4,8)*{} **[grey][|(4)]\crv{(-4,1)} ?(.7)*[grey][|(3)]\dir{<};
 (0,0);(4,8)*{} **[grey][|(4)]\crv{(4,1)} ?(.7)*[grey][|(3)]\dir{<};
 (-2,8)*{\scriptstyle a};
 (6,8)*{\scriptstyle b};
 (7,-2)*{n};
 (5,-8)*{\scriptstyle a+b};
 (-8,0)*{}; (8,0)*{};
\endxy := \xy
(-4,8);(-4,3)*{} **\dir{-};
(4,8);(4,3)*{} **\dir{-};
(-6,-3)*{}; (-6,3)*{} **\dir{-};
(-6,3)*{}; (6,3)*{} **\dir{-};
(6,3)*{}; (6,-3)*{} **\dir{-};
(6,-3)*{}; (-6,-3)*{} **\dir{-};
{\ar (0,-3)*{}; (0,-8)*{}};
(0,0)*{{\idemp}'_{a+b}};
(-2,8)*{\scriptstyle a};
(6,8)*{\scriptstyle b};
 (7,-6)*{n};
(-8,0)*{};(8,0)*{};
\endxy:\F^{(a+b)}\onen\rightarrow\F^{(a)}\F^{(b)}\onen\la -ab \ra,
\end{equation*}
\begin{equation*}
\xy
 (0,0);(0,8) **[grey][|(4)]\dir{-} ?(.4)*[grey][|(3)]\dir{<};
 (-4,-8);(0,0)*{} **[grey][|(4)]\crv{(-4,-1)} ?(.2)*[grey][|(3)]\dir{<};
 (4,-8);(0,0)*{} **[grey][|(4)]\crv{(4,-1)} ?(.2)*[grey][|(3)]\dir{<};
 (-2,-8)*{\scriptstyle a};
 (6,-8)*{\scriptstyle b};
 (7,2)*{n};
 (5,8)*{\scriptstyle a+b};
 (-8,0)*{}; (8,0)*{};
\endxy := \xy
(-4,8);(4,1)*{} **\crv{(-4,4.5) & (4,4.5)};
(4,8);(-4,1)*{} **\crv{(4,4.5) & (-4,4.5)};
(-1,1)*{}; (-7,1)*{} **\dir{-};
(-7,1)*{}; (-7,-5)*{} **\dir{-};
(-7,-5)*{}; (-1,-5)*{} **\dir{-};
(-1,-5)*{}; (-1,1)*{} **\dir{-};
(1,1)*{}; (7,1)*{} **\dir{-};
(7,1)*{}; (7,-5)*{} **\dir{-};
(7,-5)*{}; (1,-5)*{} **\dir{-};
(1,-5)*{}; (1,1)*{} **\dir{-};
{\ar (-4,-5)*{}; (-4,-8)*{}};
{\ar (4,-5)*{}; (4,-8)*{}};
(-4,-2)*{{\idemp}'_a};
(4,-2)*{{\idemp}'_b};
(-2,8)*{\scriptstyle b};
(6,8)*{\scriptstyle a};
 (7,4)*{n};
(-9,0)*{};(9,0)*{};
\endxy:\F^{(a)}\F^{(b)}\onen\rightarrow\E^{(a+b)}\onen\la -ab \ra,
\end{equation*}

\begin{equation*}
\xy
 (-4,-4)*{};(4,-4)*{} **[grey][|(4)]\crv{(-4,4) & (4,4)} ?(.15)*[grey][|(3)]\dir{>} ?(.9)*[grey][|(3)]\dir{>};
 (6,2)*{n};
 (-2,-5)*{\scriptstyle a};
 (-8,0)*{};(8,0)*{};
\endxy := \xy
(-1,3)*{}; (-7,3)*{} **\dir{-};
(-7,3)*{}; (-7,-3)*{} **\dir{-};
(-7,-3)*{}; (-1,-3)*{} **\dir{-};
(-1,-3)*{}; (-1,3)*{} **\dir{-};
(1,3)*{}; (7,3)*{} **\dir{-};
(7,3)*{}; (7,-3)*{} **\dir{-};
(7,-3)*{}; (1,-3)*{} **\dir{-};
(1,-3)*{}; (1,3)*{} **\dir{-};
(-4,-8)*{}; (-4,-3)*{} **\dir{-};
(4,-3)*{}; (4,-8)*{} **\dir{-}?(1)*\dir{>};
 (-4,3)*{};(4,3)*{} **\crv{(-4,9) & (4,9)} ?(.55)*\dir{>};
 (-2,-8)*{\scriptstyle a};
(-4,0)*{\idemp_a};
(4,0)*{{\idemp}'_a};
 (7,6)*{n};
 (-8,0)*{};(9,0)*{};
\endxy:\E^{(a)}\F^{(a)}\onen\rightarrow\onen\la a^2-an \ra,
\end{equation*}
\begin{equation*}
\xy
 (4,-4)*{};(-4,-4)*{} **[grey][|(4)]\crv{(4,4) & (-4,4)} ?(.15)*[grey][|(3)]\dir{>} ?(.9)*[grey][|(3)]\dir{>};
 (6,2)*{n};
 (-2,-5)*{\scriptstyle a};
 (-8,0)*{};(8,0)*{};
\endxy := \xy
(-1,3)*{}; (-7,3)*{} **\dir{-};
(-7,3)*{}; (-7,-3)*{} **\dir{-};
(-7,-3)*{}; (-1,-3)*{} **\dir{-};
(-1,-3)*{}; (-1,3)*{} **\dir{-};
(1,3)*{}; (7,3)*{} **\dir{-};
(7,3)*{}; (7,-3)*{} **\dir{-};
(7,-3)*{}; (1,-3)*{} **\dir{-};
(1,-3)*{}; (1,3)*{} **\dir{-};
(-4,-3)*{}; (-4,-8)*{} **\dir{-}?(1)*\dir{>};
(4,-3)*{}; (4,-8)*{} **\dir{-};
 (-4,3)*{};(4,3)*{} **\crv{(-4,9) & (4,9)} ?(.45)*\dir{<};
 (6,-8)*{\scriptstyle a};
(-4,0)*{{\idemp}'_a};
(4,0)*{\idemp_a};
 (7,6)*{n};
 (-8,0)*{};(9,0)*{};
\endxy:\F^{(a)}\E^{(a)}\onen\rightarrow\onen\la a^2+an\ra,
\end{equation*}
\begin{equation*}
\xy
 (4,4)*{};(-4,4)*{} **[grey][|(4)]\crv{(4,-4) & (-4,-4)} ?(.15)*[grey][|(3)]\dir{>} ?(.9)*[grey][|(3)]\dir{>};
 (6,-2)*{n};
 (-2,5)*{\scriptstyle a};
 (-8,0)*{};(8,0)*{};
\endxy := \xy
(-1,3)*{}; (-7,3)*{} **\dir{-};
(-7,3)*{}; (-7,-3)*{} **\dir{-};
(-7,-3)*{}; (-1,-3)*{} **\dir{-};
(-1,-3)*{}; (-1,3)*{} **\dir{-};
(1,3)*{}; (7,3)*{} **\dir{-};
(7,3)*{}; (7,-3)*{} **\dir{-};
(7,-3)*{}; (1,-3)*{} **\dir{-};
(1,-3)*{}; (1,3)*{} **\dir{-};
(-4,3)*{}; (-4,8)*{} **\dir{-}?(1)*\dir{>};
(4,3)*{}; (4,8)*{} **\dir{-};
 (-4,-3)*{};(4,-3)*{} **\crv{(-4,-9) & (4,-9)} ?(.45)*\dir{<};
 (6,8)*{\scriptstyle a};
(-4,0)*{\idemp_a};
(4,0)*{{\idemp}'_a};
 (7,-6)*{n};
 (-8,0)*{};(9,0)*{};
\endxy : \onen\rightarrow\E^{(a)}\F^{(a)}\onen\la a^2-an \ra,
\end{equation*}
\begin{equation*}
\xy
 (-4,4)*{};(4,4)*{} **[grey][|(4)]\crv{(-4,-4) & (4,-4)} ?(.15)*[grey][|(3)]\dir{>} ?(.9)*[grey][|(3)]\dir{>};
 (6,-2)*{n};
 (-2,5)*{\scriptstyle a};
 (-8,0)*{};(8,0)*{};
\endxy := \xy
(-1,3)*{}; (-7,3)*{} **\dir{-};
(-7,3)*{}; (-7,-3)*{} **\dir{-};
(-7,-3)*{}; (-1,-3)*{} **\dir{-};
(-1,-3)*{}; (-1,3)*{} **\dir{-};
(1,3)*{}; (7,3)*{} **\dir{-};
(7,3)*{}; (7,-3)*{} **\dir{-};
(7,-3)*{}; (1,-3)*{} **\dir{-};
(1,-3)*{}; (1,3)*{} **\dir{-};
(-4,8)*{}; (-4,3)*{} **\dir{-};
(4,3)*{}; (4,8)*{} **\dir{-}?(1)*\dir{>};
 (-4,-3)*{};(4,-3)*{} **\crv{(-4,-9) & (4,-9)} ?(.55)*\dir{>};
 (-2,8)*{\scriptstyle a};
(-4,0)*{{\idemp}'_a};
(4,0)*{\idemp_a};
 (7,-6)*{n};
 (-8,0)*{};(9,0)*{};
\endxy:\onen\rightarrow\F^{(a)}\E^{(a)}\onen\la a^2+an \ra.
\end{equation*}

\subsection{The split Grothendieck group $K_0(\Udot)$}

Lusztig's canonical basis $\Bbb B$ of $\UA$ is
\begin{enumerate}[(i)]
     \item $E^{(a)}F^{(b)}1_{n} \quad $ for $a,b\geq 0$,
     $n\in\Z$, $n\leq b-a$,
     \item $F^{(b)}E^{(a)}1_{n} \quad$ for $a$,$b\geq 0$, $n\in\Z$,
     $n\geq
     b-a$,
\end{enumerate}
where $E^{(a)}F^{(b)}1_{b-a}=F^{(b)}E^{(a)}1_{b-a}$. Let
 $_m\Bbb B_n$ be set of elements in $\Bbb B$ belonging to $1_m(\UA)1_n$.
The defining relations for the algebra $\UA$
all lift to explicit isomorphisms in $\UcatD$ (see \cite[Theorem 5.1, Theorem 5.9]{KLMS}) after associating to
 each $x \in \Bbb B$  a $1$-morphism in $\UcatD$ as follows:
\begin{equation} \label{eq_basis}
  x \mapsto \cal{E}(x) := \left\{
\begin{array}{cl}
  \cal{E}^{(a)}\cal{F}^{(b)}\onen & \text{if $x=E^{(a)}F^{(b)}1_n$,} \nn \\
  \cal{F}^{(b)}\cal{E}^{(a)}\onen & \text{if $x=F^{(b)}E^{(a)}1_n$.} \nn
\end{array}
  \right.
\end{equation}

Let $\B=\{\E(x)\mid x\in\Bbb B\}$
and $_m\B_n =\{\E(x)\mid x\in\, _m{\Bbb B_n}\}$.
For later use, we note that there is a bijection
\begin{gather}
  \label{e4}
  \mathcal{E}\col \mathbb{B}\overset{\cong}{\longrightarrow} \mathcal{B}.
\end{gather}

\begin{thm}\label{K0}
(\cite{KLMS})
Every $1$-morphism $f$ in $\UcatD$ is isomorphic to a unique sum of elements in $\B$. The map
\begin{eqnarray} \label{def_gamma}
\gamma\maps  \UA & \longrightarrow& K_0(\UcatD) \\
   x & \mapsto & [\cal{E}(x)]_{\cong}, \nn
\end{eqnarray}
is an isomorphism of linear categories.
\end{thm}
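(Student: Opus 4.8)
Theorem~\ref{K0} is due to \cite{KLMS}, so the plan is to assemble the statement from the structural results there together with a Krull--Schmidt argument. There are three things to check: that $\gamma$ is a well-defined $\calA$-linear functor, that it is surjective (equivalently, that every $1$-morphism of $\UcatD$ decomposes as a finite sum of shifted elements of $\B$), and that this decomposition is unique (equivalently, that $\gamma$ is injective). For well-definedness I would verify that each defining relation \eqref{lubp}--\eqref{lubz} of $\UA$ is sent to a valid identity of classes in $K_0(\UcatD)$. By \cite[Theorem~5.1, Theorem~5.9]{KLMS} every such relation lifts to an explicit direct sum decomposition of $1$-morphisms in $\UcatD$ assembled from the thick $\E^{(a)}$, $\F^{(b)}$ and the thick (co)units displayed above; passing to $K_0$, and using $\E^a\onen\cong\bigoplus_{[a]!}\E^{(a)}\onen$ and $\F^b\onen\cong\bigoplus_{[b]!}\F^{(b)}\onen$, turns each such isomorphism into the required equality in $K_0$. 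Since $\gamma$ visibly respects composition of generators, this makes it a well-defined functor of linear categories, $\calA$-linear on each hom.

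For surjectivity, note that every object of $\Ucat(m,n)$ is by construction a finite direct sum of grading shifts of strings $\E_{\underline{\epsilon}}\onen$, and every object of $\UcatD(m,n)=\Kar(\Ucat(m,n))$ is a direct summand of such a sum. Using the isomorphisms lifting \eqref{lubp}--\eqref{lubz} one rewrites any string $\E_{\underline{\epsilon}}\onen$ as a finite direct sum of shifted elements of ${}_m\B_n$ --- this is exactly the categorical shadow of expanding a monomial $E^{\pm}\cdots E^{\pm}1_n$ in Lusztig's basis $\Bbb B$. Hence every $1$-morphism of $\UcatD$ is isomorphic to a sum of elements of $\B$, the classes $[\E(x)]_\cong$ with $x\in\Bbb B$ generate $K_0(\UcatD)$ over $\calA$, and $\gamma$ is onto.

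For uniqueness and injectivity I would use Krull--Schmidt. The category $\UcatD(m,n)$ is graded, idempotent complete, and by the bubble and box calculus (relations \eqref{rei1}--\eqref{rei2} and their thick counterparts) its hom groups are finitely generated free in each degree, with the degree-zero part of $\End_{\UcatD}(\E(x))$ spanned by the identity and with no endomorphisms of $\E(x)$ in negative degree, for every $x\in{}_m\Bbb B_n$. This forces each $\E(x)$ to be indecomposable and makes $\UcatD(m,n)$ Krull--Schmidt in the graded sense; computing $\Hom_{\UcatD}(\E(x),\E(y))$ by the same calculus identifies its graded rank with Lusztig's bilinear pairing of $x$ and $y$, so that $\E(x)\la s\ra\cong\E(y)\la s'\ra$ forces $x=y$ and $s=s'$. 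Krull--Schmidt then yields uniqueness of the decomposition into elements of $\B$, so $K_0(\UcatD)(m,n)$ is free over $\calA$ on $\{[\E(x)]_\cong : x\in{}_m\Bbb B_n\}$. Since $\Bbb B$ is the $\calA$-basis of $\UA$ constructed in \cite{BLM} and $\E\col\Bbb B\to\B$ is a bijection, $\gamma$ sends a basis to a basis and is therefore an isomorphism of linear categories.

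The main obstacle is the grading-sensitive hom-space computation underlying the last paragraph: over the integral ground ring one must show that the thick $1$-morphisms $\E(x)$ are indecomposable, pairwise non-isomorphic up to shift, and that the graded ranks of the morphism spaces among them agree with the prediction of the quantum group pairing, so that no additional relations survive in $K_0$. This is precisely the content of the extended graphical calculus of \cite{KLMS}, carried out by manipulating nilHecke idempotents, bubble slides and the relations \eqref{rei1}--\eqref{rei2}; once it is in place, the rest of the argument is formal manipulation with direct sums and the Karoubi envelope.
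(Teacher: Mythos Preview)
The paper does not give its own proof of this theorem; it simply cites it from \cite{KLMS}. Your outline is a faithful reconstruction of how the result is obtained there: well-definedness via the explicit lifts of \eqref{lubp}--\eqref{lubz} in \cite[Theorems~5.1, 5.9]{KLMS}, surjectivity by rewriting arbitrary strings $\E_{\underline{\epsilon}}\onen$ in terms of shifted elements of $\B$, and uniqueness via a Krull--Schmidt argument once one knows the $\E(x)$ have only scalar degree-zero endomorphisms. The paper in fact records exactly this last ingredient as Corollary~\ref{2con}, derived from the basis of hom-spaces in Proposition~\ref{BAZA} (which is \cite[Proposition~5.15]{KLMS}).

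One small organizational remark: your surjectivity step shows that every string decomposes into elements of $\B$, and you then note that an arbitrary object of $\UcatD(m,n)$ is a direct \emph{summand} of such a sum. But passing from ``summand of a sum of $\B$'s'' to ``a sum of $\B$'s'' already requires Krull--Schmidt together with the fact that every indecomposable of $\UcatD(m,n)$ lies in $\B$ up to shift. So the surjectivity and uniqueness arguments are more intertwined than your presentation suggests; in practice one first establishes Krull--Schmidt and the indecomposability of the $\E(x)$, and both conclusions follow together. This is a matter of packaging rather than a genuine gap.
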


\subsection{The $2$-category $\U^*$}

The $2$-category $\U^*$ is defined as follows.  The objects and
$1$-morphisms of $\U^*$ are the same as those of $\U$.  Given a pair
of $1$-morphisms $f,g\col n\to m$, the abelian group $\U^*(n,m)(f,g)$
is defined by
\begin{gather*}
  \U^*(n,m)(f,g):= \bigoplus_{t\in\Z}\U(n,m)(f,g\la t\ra).
\end{gather*}
The category $\U^*(n,m)$ is additive and enriched over $\Z$-graded
abelian groups.  Alternatively, the linear category $\U^*(n,m)$ is
obtained from $\U(n,m)$ by adding a family of natural isomorphisms
$f\rightarrow f\la1\ra$ for each object $f$ of the category $\U(n,m)$.

In $\U^*(n,m)$ an object $f$ and its translation $f\la t\ra$
are isomorphic via the $2$-isomorphism
\begin{gather*}
  1_{f}\in \U(n,m)(f,f\la 0\ra)=\U(n,m)(f,(f\la t\ra) \la -t \ra)\subset\U^*(n,m)(f,f\la t\ra).
\end{gather*}
The inverse of the isomorphism $1_f \maps f \to f \la t \ra$ is given by
\begin{gather*}
  1_{f}\la t \ra \in \U(n,m)(f\la t \ra,f\la t\ra)=\U(n,m)(f\la t\ra,(f\la 0\ra) \la t \ra)\subset\U^*(n,m)(f\la t\ra ,f).
\end{gather*}
These isomorphisms $f \cong f\la t\ra$ make the Grothendieck group $K_0(\U^{\ast})$ into a $\Z$-module, rather than $\Z[q,q^{-1}]$-module since $[f]_{\cong}=[f\la t \ra ]_{\cong}$ in $\U^{\ast}$.

The horizontal composition in $\U$ induces horizontal composition in
$\U^*$.  It follows that the $\U^*(n,m)$, $n,m\in\Z$, form an additive
$2$-category.

The Karoubi envelope $\Kar(\U^*)$ will be denoted by $\dotU^*$, which
is equivalent as an additive $2$-category to the additive $2$-category
obtained from $\Udot$ by applying the procedure where we obtained
$\U^*$ from $\U$, i.e.,
\begin{gather*}
  \dotU^*(n,m)(f,g) = \bigoplus_{t\in\Z}\dotU(n,m)(f,g\la t\ra).
\end{gather*}

\section{Strongly upper-triangular basis for $\Udot$}
Before we define a basis of the hom-modules in $\Udot$, we need
some basic facts about symmetric functions.

\subsection{Symmetric functions} \label{subsec_sympoly}
For a partition $\lambda=(\lambda_1,\lambda_2,\dots,\lambda_a)$
with $\lambda_1\geq \lambda_2\geq\dots\geq\lambda_a\geq 0$ let
$|\lambda| := \sum_{i=1}^a\lambda_i$.
We denote by $P(a,b)$  the set of all partitions $\lambda$ with at most $a$ parts (i.e.\ with $\lambda_{a+1}=0$) such that $\lambda_1\leq b$. Moreover,
the set of all partitions with at most $a$
parts (i.e.\ the set $P(a,\infty)$) we denote simply by $P(a)$.  We will denote the collection of all partitions of arbitrary size by $P$.
The dual (conjugate) partition of  $\lambda$ is the partition
$\lambda^t=(\lambda^t_1,\lambda^t_2,\ldots)$ with
$\lambda^t_j=\sharp\{i\;|\;\lambda_i\ge j\}$ which is given by reflecting
the Young diagram of $\lambda$ along the diagonal.
For a partition $\lambda\in P(a,b)$ we define the \emph{complementary partition} $\lambda^c=(b-\lambda_a,\dots,b-\lambda_2,b-\lambda_1)$.
Finally we define $\hat\lambda:=(\lambda^c)^t$.

Let us denote by $S_k$ the symmetric group and $\Sym_k=\Z[x_1, \dots,x_k]^{S_k}$
the ring of symmetric polynomials.
For any $k$-tuple $\mu$ of natural numbers we define
$$a_\mu=\sum_{\sigma \in S_k} {\text{\rm sign}}(\sigma) x^{\sigma(\mu)} \in \Sym_k,$$
where $x^\mu=x_1^{\mu_1} x_2^{\mu_2}\dots x_k^{\mu_k}$.
Clearly, $a_\mu=0$ if $\mu_i=\mu_j$ for $i\neq j$, and it changes the sign
after permuting $\mu_i$ and $\mu_{i+1}$.
The Schur polynomials $s_\mu\in \Sym_k$ are then defined as
\begin{equation} \label{def_schur}
s_\mu:=\frac{a_{\mu+\delta}}{a_{\delta}}=
\frac{\det(x_i^{\mu_j+k-j})_{1\leq i,j\leq k}}{\det(x_i^{k-j})_{1\leq i,j\leq k}}.
\end{equation}
Note that this definition make sense for any $k$-tuple $(\mu_1, \dots,
\mu_k)\in \Z^k$ with $\mu_i\geq i-k$. We will make use of this fact in
what follows.

Let $\Sym$ be the ring of symmetric functions, defined as a subring of
the inverse limit of the system $(\Sym_k)_{k\ge0}$ (see
e.g. \cite{McD}).  The elementary symmetric functions
$$
\elem_j:=\sum_{1\leq i_1<i_2<\dots<i_j} x_{i_1}\dots x_{i_j}\,,
$$
or the complete symmetric functions
$$
h_j:=\sum_{1\leq i_1\leq i_2\leq\dots\leq i_j} x_{i_1}\dots x_{i_j}
$$
generate $\Sym$ as a free commutative ring, i.e.,
$\Sym=\Z[\elem_1,\elem_2,\ldots]=\Z[h_1,h_2,\ldots]$.
The power sum symmetric functions are defined by
$$
p_t:=\sum_i x_i^t\,.
$$
We have the Newton identities:
\begin{gather*}
  j\elem_j=\sum_{i=1}^j(-1)^{i-1}\elem_{j-i}p_i,\\
  jh_j=\sum_{i=1}^jh_{j-i}p_i\, .
\end{gather*}

For $t,j\ge0$, set
$$
\elem_{t,j}:=\sum_{1\leq i_1<i_2<\dots<i_j} x_{i_1}^t\dots x_{i_j}^t\,.
$$
Then we have
\begin{gather*}
j\elem_{t,j}=\sum_{i=1}^j(-1)^{i-1}\elem_{t,j-i}p_{it}\,.
\end{gather*}
We also have for $t,j\ge0$
\begin{gather*}
  \elem_{t,j}=\frac{1}{j!}\sum_{\lambda \in P(j)}M_{j,\lambda }p_t^{\lambda _1}p_{2t}^{\lambda _2}p_{3t}^{\lambda _3}\cdots
\end{gather*}
where
\begin{gather} \label{eq_M}
  M_{j,\lambda }=j!(-1)^{|\lambda |-l(\lambda )}/\prod_{i\ge 1}i^{m_i(\lambda )}m_i(\lambda ).
\end{gather}
Here $m_i(\lambda )$ denotes the number of parts in $\lambda $ of size $i$.

By Pieri's formulas for a partition $\lambda$ we have
\begin{equation}\label{pieri}
s_\lambda \elem_j=\sum_{\mu\in \lambda\otimes 1^j} s_\mu,
\end{equation}
where $\lambda\otimes 1^j$ is the set of partitions obtained
by adding $j$ boxes to $\lambda$ at most one per row.

The ring
$\Sym$ has a Hopf algebra structure with comultiplication
\begin{gather*}
  \Delta \col \Sym \longrightarrow \Sym \otimes \Sym
\end{gather*}
 given by
\begin{gather*}
  \Delta (s_\lambda )=\sum_{\mu,\nu \in P} N^\lambda _{\mu \nu }s_\mu \otimes s_\nu ,
\end{gather*}
where $N^\lambda_{\mu\nu}$ are the Littlewood-Richardson coefficients,
counit
\begin{gather*}
  \epsilon \col \Sym \longrightarrow \modZ ,\quad s_\lambda \mapsto \delta _{\lambda ,0},
\end{gather*}
and antipode
\begin{gather*}
  S \col \Sym \longrightarrow \Sym ,\quad s_\lambda \mapsto (-1)^{|\lambda |}s_{\lambda ^t}.
\end{gather*}
We will also use the standard notation for the skew Schur functions
$$s_{\nu/\mu}=\sum_\lambda N^{\nu}_{\lambda \mu} s_\lambda\, .
$$

We will need the following two bases for the ring $\Sym$ of symmetric
functions:
\begin{gather}
\{s_\lambda\;|\;\lambda\in P\},
\end{gather}
\begin{gather}
\label{ba2}
\{\elem_{t_1,j_1}\dots \elem_{t_s,j_s}\;|\;s\geq 0; t_1>\dots>t_s\geq 1; j_1,\dots,j_s\geq 1\}.
\end{gather}

\subsection{Basis for $2$-morphisms in $\Udot$}

For any partition $\lambda\in P(a)$ let us define
\begin{equation}
\label{Elam}
\E^{(a)}_{\lambda}\onen
=\E^{(a)}_{s_{\lambda}}\onen=
\xy
 (0,8);(0,-8); **[grey][|(4)]\dir{-} ?(1)*[grey][|(3)]\dir{>};
 (5,3)*{n};
 (2,-7)*{\scriptstyle a};
 (7,0)*{};
 (-3,0)*{};
(0,0)*{\bigb{\lambda}};
  \endxy
  =\xy
 (0,8);(0,-8); **[grey][|(4)]\dir{-} ?(1)*[grey][|(3)]\dir{>};
 (5,3)*{n};
 (2,-7)*{\scriptstyle a};
 (7,0)*{};
 (-3,0)*{};
(0,0)*{\bigb{s_{\lambda}}};
  \endxy:=
\xy
 (0,-6);(0,6)*{} **\crv{(-18,-5) & (-18,5)}?(.5)*{\bullet};
 (-17,1)*{\scriptstyle \lambda_1+};
 (-17,-1)*{\scriptstyle a-1};
 (0,-6);(0,6)*{} **\crv{(-6,-4) & (-6,4)}?(.5)*{\bullet};
 (-8,1)*{\scriptstyle \lambda_2+};
 (-8,-1)*{\scriptstyle a-2};
 (0,-6);(0,6)*{} **\crv{(6,-4) & (6,4)}?(.5)*{\bullet};
 (8,1)*{\scriptstyle \lambda_{a-1}};
 (8,-1)*{\scriptstyle +1};
 (0,-6);(0,6)*{} **\crv{(18,-5) & (18,5)}?(.5)*{\bullet};
 (16,1)*{\scriptstyle \lambda_a};
 (0,0)*{\dots};
 (0,6);(0,11) **[grey][|(4)]\dir{-}?(1)*[grey][|(3)]\dir{>};
 (0,-6);(0,-11) **[grey][|(4)]\dir{-};
 (9,8)*{n};
 (2,-11)*{\scriptstyle a};
 (18,0)*{};
 (-21,0)*{};
\endxy
 :\E^{(a)}\onen\rightarrow\E^{(a)}\onen\la 2|\lambda| \ra
\end{equation}
and analogously,
\begin{equation*}
\F^{(a)}_{\lambda}\onen=
\F^{(a)}_{s_{\lambda}}\onen=
\xy
 (0,-8);(0,8); **[grey][|(4)]\dir{-} ?(1)*[grey][|(3)]\dir{>};
 (5,3)*{n};
 (2,7)*{\scriptstyle a};
 (7,0)*{};
 (-3,0)*{};
(0,0)*{\bigb{\lambda}};
  \endxy=\xy
 (0,-8);(0,8); **[grey][|(4)]\dir{-} ?(1)*[grey][|(3)]\dir{>};
 (5,3)*{n};
 (2,7)*{\scriptstyle a};
 (7,0)*{};
 (-3,0)*{};
(0,0)*{\bigb{s_{\lambda}}};
  \endxy:=
\xy
 (0,-6);(0,6)*{} **\crv{(-18,-5) & (-18,5)}?(.5)*{\bullet};
 (-16,1)*{\scriptstyle \lambda_a};
 (0,-6);(0,6)*{} **\crv{(-6,-4) & (-6,4)}?(.5)*{\bullet};
 (-8,1)*{\scriptstyle \lambda_{a-1}};
 (-8,-1)*{\scriptstyle +1};
 (0,-6);(0,6)*{} **\crv{(6,-4) & (6,4)}?(.5)*{\bullet};
 (8,1)*{\scriptstyle \lambda_2+};
 (8,-1)*{\scriptstyle a-2};
 (0,-6);(0,6)*{} **\crv{(18,-5) & (18,5)}?(.5)*{\bullet};
 (17,1)*{\scriptstyle \lambda_1+};
 (17,-1)*{\scriptstyle a-1};
 (0,0)*{\dots};
 (0,6);(0,11) **[grey][|(4)]\dir{-};
 (0,-6);(0,-11) **[grey][|(4)]\dir{-}?(1)*[grey][|(3)]\dir{>};
 (9,8)*{n};
 (2,11)*{\scriptstyle a};
 (-18,0)*{};
 (21,0)*{};
\endxy
:\F^{(a)}\onen\rightarrow\F^{(a)}\onen\la 2|\lambda| \ra.
\end{equation*}
These notations for $s_\lambda$ extends linearly to any elements of
$\Sym_a$.
The correspondence
\begin{gather*}
  \Sym_a\ni y \mapsto \E^{(a)}_{y}\onen
\end{gather*}
is multiplicative, i.e.,  we have for $y,z\in \Sym_a$
\begin{equation*}
\xy
 (0,8);(0,-8); **[grey][|(4)]\dir{-} ?(1)*[grey][|(3)]\dir{>};
 (5,4)*{n};
 (2,-7)*{\scriptstyle a};
 (7,0)*{};
 (-3,0)*{};
(0,3)*{\bigb{y}};
(0,-3)*{\bigb{z}};
\endxy=
\xy
 (0,8);(0,-8); **[grey][|(4)]\dir{-} ?(1)*[grey][|(3)]\dir{>};
 (5,4)*{n};
 (2,-7)*{\scriptstyle a};
 (7,0)*{};
 (-5,0)*{};
(0,0)*{\bigb{yz}};
\endxy.
\end{equation*}

We will use the following algebra isomorphisms
 $b^+,b^-\col \Sym\longrightarrow \End(\onen)$  defined by
\begin{equation}\label{defbh}
   b^+(h_i)= \xy
(0,0)*{\rbub};
 (5,4)*{n};
 (6,0)*{};
 (5,-2)*{\scriptstyle i};
\endxy, \quad
 b^-(h_i)= \xy
(0,0)*{\lbub};
 (5,4)*{n};
 (6,0)*{};
 (5,-2)*{\scriptstyle i};
\endxy\, .
\end{equation}
It follows
\begin{equation}\label{defbe}
b^+(\elem_i)=(-1)^i\xy
(0,0)*{\lbub};
 (5,4)*{n};
 (6,0)*{};
 (5,-2)*{\scriptstyle i};
\endxy ,
 \quad   b^-(\elem_i)=(-1)^i\xy
(0,0)*{\rbub};
 (5,4)*{n};
 (6,0)*{};
 (5,-2)*{\scriptstyle i};
\endxy\, .
\end{equation}

For $n\in\Z$, $a,b \geq 0$, $\delta\in\Z$, let us define the following
sets:
\begin{gather*}
 B^+(n,a,b,\delta) := \\
 \{f^{b,a,i,j}_{\lambda,\mu,\nu,\sigma,\tau}\onen |
0\leq i,j\leq \min(a,b), \delta=i-j, \lambda \in P(a-j), \mu \in P(b-j), \nu \in P(i), \sigma \in P(j), \tau \in P \},
\end{gather*}
\begin{gather*}
B^-(n,a,b,\delta) := \\
\{g^{a,b,i,j}_{\lambda,\mu,\nu,\sigma,\tau}\onen |
0\leq i,j\leq \min(a,b), \delta=i-j, \lambda \in P(a-j), \mu \in P(b-j), \nu \in P(i), \sigma \in P(j), \tau \in P\},
\end{gather*}
where
\begin{equation*}
 f^{b,a,i,j}_{\lambda,\mu,\nu,\sigma,\tau}\onen := \xy
 (-8,14);(-8,-14); **[grey][|(4)]\dir{-} ?(.85)*[grey][|(3)]\dir{<}?(.5)*{\bigb{\mu}};
 (8,-14);(8,14); **[grey][|(4)]\dir{-} ?(.85)*[grey][|(3)]\dir{<}?(.5)*{\bigb{\lambda}};
 (8,10)*{};(-8,10)*{} **[grey][|(4)]\crv{(8,2) & (-8,2)} ?(.25)*[grey][|(3)]\dir{<}?(.5)*{\bigb{\nu}};
 (-8,-10)*{};(8,-10)*{} **[grey][|(4)]\crv{(-8,-2) & (8,-2)} ?(.25)*[grey][|(3)]\dir{<}?(.5)*{\bigb{\sigma}};
 (-13,14)*{\scriptstyle b+i-j};
 (13,14)*{\scriptstyle a+i-j};
 (-10,-14)*{\scriptstyle b};
 (10,-14)*{\scriptstyle a};
 (5,-7)*{\scriptstyle j};
 (-5,7)*{\scriptstyle i};
 (18,2)*{b^+(s_{\tau})};
 (16,-8)*{n};
 (24,0)*{};
\endxy,\quad\quad g^{a,b,i,j}_{\lambda,\mu,\nu,\sigma,\tau}\onen  := \xy
 (-8,14);(-8,-14); **[grey][|(4)]\dir{-} ?(.9)*[grey][|(3)]\dir{>}?(.5)*{\bigb{\lambda}};
 (8,-14);(8,14); **[grey][|(4)]\dir{-} ?(.9)*[grey][|(3)]\dir{>}?(.5)*{\bigb{\mu}};
 (8,10)*{};(-8,10)*{} **[grey][|(4)]\crv{(8,2) & (-8,2)} ?(.3)*[grey][|(3)]\dir{>}?(.5)*{\bigb{\nu}};
 (-8,-10)*{};(8,-10)*{} **[grey][|(4)]\crv{(-8,-2) & (8,-2)} ?(.3)*[grey][|(3)]\dir{>}?(.5)*{\bigb{\sigma}};
 (-13,14)*{\scriptstyle a+i-j};
 (13,14)*{\scriptstyle b+i-j};
 (-10,-14)*{\scriptstyle a};
 (10,-14)*{\scriptstyle b};
 (5,-7)*{\scriptstyle j};
 (-5,7)*{\scriptstyle i};
 (18,2)*{b^+(s_{\tau})};
 (16,-8)*{n};
 (24,0)*{};
\endxy
\end{equation*}

\begin{prop}[Proposition 5.15 of \cite{KLMS}]
\label{BAZA}
Let  $a,b\ge0$, $\delta\in\Z$. The $\Z$-module
\begin{equation*}
 \dot{\U}(\F^{(b)}\E^{(a)}\onen,\F^{(b+\delta)}\E^{(a+\delta)}\onen\la t\ra)
\end{equation*}
is free with basis given by the elements of $B^+(n,a,b,\delta)$ of
degree $t$.  Similarly,  the $\Z$-module
\begin{equation*}
 \dot{\U}(\E^{(a)}\F^{(b)}\onen,\E^{(a+\delta)}\F^{(b+\delta)}\onen\la t\ra)
\end{equation*}
is free with basis given by the elements of $B^-(n,a,b,\delta)$
of degree $t$.
\end{prop}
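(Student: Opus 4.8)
\emph{Plan of proof.} This is Proposition 5.15 of \cite{KLMS}, so no new argument is needed in this paper; we only record the shape such a proof takes. Fix $a,b\ge0$ and $\delta\in\Z$. There are two tasks: \textbf{(Spanning)} the diagrams $f^{b,a,i,j}_{\lambda,\mu,\nu,\sigma,\tau}\onen$ generate $\dot\U(\F^{(b)}\E^{(a)}\onen,\F^{(b+\delta)}\E^{(a+\delta)}\onen\la t\ra)$ over $\Z$; and \textbf{(Independence)} those of a fixed degree $t$ are $\Z$-linearly independent. The $B^-$ statement follows from the $B^+$ statement by the symmetry of $\U$ interchanging $\E\leftrightarrow\F$ and $n\leftrightarrow-n$, so the plan is to concentrate on $B^+$.

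\emph{Spanning.} The first step is to expand everything into the thin calculus: a width-$a$ thick strand is the idempotent $\idemp_a$ on $a$ ordinary strands, so a $2$-morphism between thick objects is an ordinary $2$-morphism of $\U$ pre- and post-composed with such idempotents. For an ordinary $2$-morphism, biadjointness of $\E\onen$ and $\F\onen$ lets one isotope the underlying diagram so that every cup or cap joining an $\E$-strand to an $\F$-strand is pushed to the top or bottom, leaving a single genuine cup of some thickness $i$ and a single genuine cap of thickness $j$ with $i-j=\delta$; the through-strands then carry only dots and internal crossings. Resolving the crossings with the nilHecke relations turns the dot-content on each group of through-strands into a symmetric polynomial, which—once re-absorbed into $\idemp_{a-j}$, $\idemp_{b-j}$ and the cup/cap idempotents—becomes a $\Z$-combination of Schur-polynomial boxes $s_\lambda,s_\mu$ on the through-strands and $s_\nu,s_\sigma$ on the cup and cap. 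Every closed component becomes a dotted bubble, and the curl relations \eqref{rei1}, \eqref{rei2}, together with the bubble-slide relations and the fake-bubble recursion \eqref{gras}, move each bubble into the right-hand region and collapse it to $b^+(s_\tau)$ for a single partition $\tau\in P$. This writes an arbitrary diagram as a $\Z$-combination of the claimed generators.

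\emph{Independence.} Here I would reduce to the known basis of the thin Hom-spaces $\U(\F^{b}\E^{a}\onen,\F^{b+\delta}\E^{a+\delta}\onen\la t\ra)$ from \cite{Lau1}. Using the isomorphisms $\E^{a}\onen\cong\bigoplus_{[a]!}\E^{(a)}\onen$ and $\F^{b}\onen\cong\bigoplus_{[b]!}\F^{(b)}\onen$ in $\dot\U$, the thin Hom-space decomposes, graded and over $\Z$, into $[a]![b]![a+\delta]![b+\delta]!$-many copies—placed in the appropriate internal degrees—of the thick Hom-space in question. Both sides are free graded $\Z$-modules with finite-rank homogeneous pieces bounded below, so it suffices to compare graded ranks: the generating function $\sum q^{\deg}$ over $B^+(n,a,b,\delta)$ is the product of $\prod_{k\ge1}(1-q^{2k})^{-1}$ (the contribution of the free choice $\tau\in P$) with the finitely-supported contributions of $\lambda\in P(a-j)$, $\mu\in P(b-j)$, $\nu\in P(i)$, $\sigma\in P(j)$, summed over the admissible pairs $(i,j)$ with $i-j=\delta$. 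One checks that this equals the graded rank dictated by the thin basis, divided by the multiplicity $[a]![b]![a+\delta]![b+\delta]!$; this is a Cauchy-type symmetric-function identity. Equality of graded ranks together with the spanning statement then forces the generators to be a $\Z$-basis in each degree.

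\emph{Main obstacle.} The delicate point is matching the graded ranks exactly. One must verify that the reductions in the spanning step never merge two distinct partitions $\tau$—so that the bubble factor is genuinely the free polynomial ring $\Sym$—and that the fake-bubble conventions and the degree-positivity vanishing \eqref{eq_positivity_bubbles} are correctly built into the count: an over-count would expose a missing relation, an under-count a gap in spanning. This is precisely where one must invoke the structural results of \cite{Lau1,KLMS} (the explicit thin basis and the idempotent decompositions of $\E^a\onen$, $\F^b\onen$) rather than rely on formal diagrammatics alone.
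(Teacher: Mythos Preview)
Your proposal correctly identifies that this statement is quoted from \cite{KLMS} and that the present paper gives no proof of it; the paper simply cites Proposition~5.15 of \cite{KLMS} and moves on. The sketch you supply is a plausible outline of the KLMS argument (spanning via diagrammatic reduction, independence via a graded-rank comparison against the thin calculus), but none of it is required here.
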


\begin{cor}\label{3.4}
The minimal degree of an element of $B^+(n,a,b,\delta)$ for $n\geq b-a$, and of $B^-(n,a,b,\delta)$ for $n\leq b-a$ is at least $\delta^2$.
The only degree $0$ element in $B^+(n,a,b,0)$ for $n\geq b-a$ (in $B^-(n,a,b,0)$ for $n\leq b-a$) is the identity.
\end{cor}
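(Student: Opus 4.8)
The plan is to read the statement off the explicit homogeneous basis of Proposition~\ref{BAZA}. Since that basis is homogeneous, it suffices to compute the degree of a general basis vector $f^{b,a,i,j}_{\lambda,\mu,\nu,\sigma,\tau}\onen$ of $B^+(n,a,b,\delta)$ and then to minimise over the allowed data $0\le i,j\le\min(a,b)$, $i-j=\delta$, $\lambda\in P(a-j)$, $\mu\in P(b-j)$, $\nu\in P(i)$, $\sigma\in P(j)$, $\tau\in P$. The degree is additive over the graphical constituents of the diagram, so, using the degree conventions of the thick calculus recorded in Section~\ref{sec:2-categories-ucatd} (and \cite{KLMS}), I would record it as
\[
\deg f^{b,a,i,j}_{\lambda,\mu,\nu,\sigma,\tau}\onen
  = C(a,b,i,j,n)+2\bigl(|\lambda|+|\mu|+|\nu|+|\sigma|+|\tau|\bigr),
\]
where $2|\lambda|,2|\mu|,2|\nu|,2|\sigma|$ are the degrees of the Schur boxes sitting on the two thick strands and on the thickness-$i$ cap and thickness-$j$ cup, $2|\tau|$ is the degree of the bubble $b^+(s_\tau)$, and $C(a,b,i,j,n)$ is the degree of the remaining ``skeleton'' (two thick splitters, a thickness-$j$ cap, a thickness-$i$ cup and two thick mergers). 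Each box contributes a non-negative amount that vanishes precisely when the corresponding partition is empty, so the minimal degree occurring in $B^+(n,a,b,\delta)$ equals $\min\{C(a,b,i,j,n)\mid i-j=\delta,\ 0\le i,j\le\min(a,b)\}$, attained with all five partitions empty.

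Next I would compute $C(a,b,i,j,n)$ by tracking weights along the skeleton. Splitting $\F^{(b)}$ as $\F^{(b-j)}\F^{(j)}$ and $\E^{(a)}$ as $\E^{(j)}\E^{(a-j)}$ costs $-(b-j)j-(a-j)j$; capping the resulting middle $\F^{(j)}\E^{(j)}$, which bounds a region of weight $n+2(a-j)$, costs $j^2+j\bigl(n+2(a-j)\bigr)$; these combine to $j^2+j(n+a-b)$. By the same bookkeeping, creating $\F^{(i)}\E^{(i)}$ near the top, again over a region of weight $n+2(a-j)$, and merging it into the two strands costs $i^2+i(n+a-b)$. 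Hence
\[
C(a,b,i,j,n)=i^2+j^2+(i+j)(n+a-b).
\]

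With this in hand the corollary is immediate. For $n\ge b-a$ put $m:=n+a-b\ge0$; then $C(a,b,i,j,n)=i^2+j^2+(i+j)m\ge i^2+j^2\ge(i-j)^2=\delta^2$, the lower bound claimed for $B^+$. If in addition $\delta=0$ then $i=j$ and $C(a,b,i,i,n)=2i^2+2im$, which vanishes only for $i=0$; combined with the observation that the minimum forces all partitions to be empty, this shows that the unique degree-$0$ element of $B^+(n,a,b,0)$ is $f^{b,a,0,0}_{\emptyset,\emptyset,\emptyset,\emptyset,\emptyset}\onen$, a diagram with no cups, caps, boxes or non-trivial bubbles, hence the identity $2$-morphism of $\F^{(b)}\E^{(a)}\onen$. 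The statements for $B^-(n,a,b,\delta)$ with $n\le b-a$ follow verbatim with $m$ replaced by $(b-a)-n\ge0$, or by transporting the $B^+$ case along the symmetry of $\Udot$ interchanging $\E$ with $\F$ and $n$ with $-n$. The computations of the individual degrees are routine; the one place that needs care — and the main obstacle — is the weight bookkeeping in the second paragraph, since the degrees of the thick cap and cup depend on the weight of the region they bound, and this weight shifts as strands are split off and merged back in; one should also verify explicitly that the $(i,j)=(0,0)$, all-partitions-empty basis vector really is the identity rather than merely some degree-$0$ endomorphism.
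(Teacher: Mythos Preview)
Your proposal is correct and follows essentially the same approach as the paper: reduce to the ``skeleton'' by observing that the Schur boxes and bubbles contribute non-negatively, compute the degree of the undecorated diagram as $i^2+j^2\pm(i+j)(n+a-b)$, and bound below by $i^2+j^2\ge(i-j)^2=\delta^2$. The only cosmetic difference is that you carry out the explicit computation for $B^+$ with $n\ge b-a$ and invoke symmetry for $B^-$, whereas the paper does it the other way around; your formula $C(a,b,i,j,n)=i^2+j^2+(i+j)(n+a-b)$ and its verification are correct.
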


\begin{proof}
Let $n\leq b-a$. Polynomials and bubbles can only increase the degree, so it is enough to compute the degree of $g^{a,b,i,j}_{0,0,0,0,0}\onen$, which is
$$-j(b-j)-j(a-j)-i(b-j)-i(a-j)+i^2+j^2-i(n-2(b-j))-j(n-2(b-j))=$$
$$= -\delta(a+b-2j)+i^2+j^2-\delta(n-2(b-j))= -\delta(n+a-b)+i^2+j^2\geq i^2+j^2\geq\delta^2\, .$$
For $\delta=0$ it is $0$ if and only if $i=0$.
The case when $n\geq b-a$ is similar.
\end{proof}

\begin{cor}
\label{2con}
\label{deg}
Let $t,t'\in\Z$ and $x,y\in \Bbb B$. If $t'-t<0$  or $t-t'=0$ and $x\neq y$, then
\begin{equation*}
 \dot\U(\E(x)\la t\ra,\E(y)\la t'\ra)=0.
\end{equation*}
The only elements in $\dot\U(\E(x)\la t\ra,\E(x)\la t\ra)$ are multiples of the identity.
\end{cor}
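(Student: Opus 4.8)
The plan is to deduce everything from the explicit bases of Proposition~\ref{BAZA} together with the degree estimate of Corollary~\ref{3.4}. First I would note $\dot\U(\E(x)\la t\ra,\E(y)\la t'\ra)\cong\dot\U(\E(x),\E(y)\la t'-t\ra)$, so it suffices to control the degrees $s$ in which $\dot\U(\E(x),\E(y)\la s\ra)$ is nonzero. Such a hom-space vanishes unless $\E(x)$ and $\E(y)$ have the same source and the same target object; since the source of $\E(z)$ for $z\in\Bbb B$ is the weight $n$ occurring in $z$ and its target is $n+2c$, where $c$ is the difference of the $E$- and $F$-exponents of $z$, matching source and target forces $x$ and $y$ to involve the same $n$ and the same $c$. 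The key bookkeeping step is then this: according to whether the common target is $\ge-n$ or $\le-n$ (equivalently $n\ge-c$ or $n\le-c$), both $\E(x)$ and $\E(y)$ can \emph{simultaneously} be brought into the divided-power normal form $\F^{(b_i)}\E^{(a_i)}\onen$ with $n\ge b_i-a_i$, or $\E^{(a_i)}\F^{(b_i)}\onen$ with $n\le b_i-a_i$, where $a_i-b_i=c$; in the boundary case $n=-c$ one invokes the overlap identity $E^{(a)}F^{(b)}1_{-c}=F^{(b)}E^{(a)}1_{-c}$ and the fact that replacing a $1$-morphism by an isomorphic one does not change the hom-space. Crucially, the side condition ($n\ge b_i-a_i$ for the $\F\E$-form, $n\le b_i-a_i$ for the $\E\F$-form) that will be needed to invoke Corollary~\ref{3.4} is automatic here, because it is exactly the condition under which $\F^{(b)}\E^{(a)}\onen$ (resp.\ $\E^{(a)}\F^{(b)}\onen$) is a canonical $1$-morphism, i.e.\ a genuine $\E(z)$ (cf.\ Theorem~\ref{K0} and \eqref{eq_basis}).

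With both $1$-morphisms in a common normal form, set $\delta:=a_2-a_1=b_2-b_1$, and record that $x=y$ precisely when $\delta=0$, since a canonical basis element is determined by its weight and its two exponents (this is the bijectivity \eqref{e4}). In the $\F\E$-case, Proposition~\ref{BAZA} identifies $\dot\U(\E(x),\E(y)\la s\ra)$ with the span of the degree-$s$ elements of $B^+(n,a_1,b_1,\delta)$, and, since $n\ge b_1-a_1$, Corollary~\ref{3.4} shows every element of $B^+(n,a_1,b_1,\delta)$ has degree at least $\delta^2\ge0$ (note the bound $\delta^2$ is insensitive to the sign of $\delta$, so no extra case split is needed there). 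The $\E\F$-case is handled identically using the second half of Proposition~\ref{BAZA} and the set $B^-(n,a_1,b_1,\delta)$ with $n\le b_1-a_1$. Hence $\dot\U(\E(x),\E(y)\la s\ra)=0$ whenever $s<\delta^2$.

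Finally I would assemble the cases. Taking $s=t'-t$: if $t'-t<0$, then $s<0\le\delta^2$, so the hom-space vanishes; if $t'=t$ and $x\ne y$, then $\delta\ne0$ so $s=0<\delta^2$ and it again vanishes. This gives the first assertion. If $x=y$, then $\delta=0$ and we may take $\E(x)=\E(y)$, so $\dot\U(\E(x)\la t\ra,\E(x)\la t\ra)$ is the group of degree-$0$ endomorphisms of $\E(x)$; by the last sentence of Corollary~\ref{3.4}—the only degree-$0$ element of $B^+(n,a_1,b_1,0)$ (resp.\ $B^-(n,a_1,b_1,0)$) is the identity—this group is spanned by the identity $2$-morphism, giving the second assertion. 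I expect the only genuine effort to lie in the normal-form bookkeeping of the first paragraph: once $\E(x)$ and $\E(y)$ are forced into the same $\F\E$- or $\E\F$-form with the right inequality on $n$, the statement is an immediate consequence of the two cited results.
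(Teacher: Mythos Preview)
Your argument is correct and is exactly the intended one: the paper states this as an immediate corollary of Proposition~\ref{BAZA} and Corollary~\ref{3.4} without further proof, and your write-up simply unpacks that implication, reducing via the grading shift to a degree estimate, putting $\E(x)$ and $\E(y)$ into a common $\F\E$- or $\E\F$-form so that the hypothesis $n\ge b-a$ (resp.\ $n\le b-a$) of Corollary~\ref{3.4} is satisfied, and reading off both vanishing statements and the degree-$0$ endomorphism claim from the basis description. The normal-form bookkeeping you flag (including the boundary case $n=b-a$) is handled correctly.
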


\subsection{Strongly upper-triangular basis for
  $\Udot(n,m)$}

\begin{prop}
  \label{r16}
  For $m,n\in \modZ $, the set $_n\mathcal{B}'_m:=\{x\la t\ra\;|\;x\in {_n\mathcal{B}_m}, t\in\modZ\}$  is a strongly upper-triangular basis for
  $\Udot(n,m)$.
\end{prop}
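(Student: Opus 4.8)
The plan is to verify directly the two conditions in the definition of a strongly upper-triangular basis (as introduced just before Proposition~\ref{r24}) for the subset $B:={_n\mathcal{B}'_m}$ of $\Ob(\Udot(n,m))$: first, that the inclusion of the full subcategory $\Udot(n,m)|_B$ induces an equivalence of additive categories $(\Udot(n,m)|_B)^{\oplus}\simeq\Udot(n,m)$; second, that $\Udot(n,m)|_B$ is strongly upper-triangular in the sense of Definition~\ref{r23}.

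For the first condition I would invoke Theorem~\ref{K0}: every $1$-morphism of $\Udot(n,m)$ is isomorphic to a finite direct sum of grading shifts $\E(x)\la t\ra$ with $x\in{_n\Bbb B_m}$ and $t\in\modZ$, i.e.\ to a direct sum of objects of $B$. Hence the canonical functor $(\Udot(n,m)|_B)^{\oplus}\to\Udot(n,m)$ is essentially surjective, and it is fully faithful for the purely formal reason that $\Udot(n,m)$ is additive: the hom-group between two direct sums is the group of matrices of morphisms between their summands, which is exactly the hom-group computed in $(\Udot(n,m)|_B)^{\oplus}$. Therefore it is an equivalence.

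For the second condition I would equip $B$ with the partial order defined, for $x,y\in{_n\Bbb B_m}$ and $t,t'\in\modZ$, by $\E(x)\la t\ra\le\E(y)\la t'\ra$ if and only if $t<t'$, or $x=y$ and $t=t'$; a short check shows this relation is reflexive, transitive and antisymmetric, the only point worth noting being antisymmetry, which rests on the fact that $t<t'$ and $t'<t$ cannot both hold. Corollary~\ref{2con} then provides exactly what upper-triangularity demands: if $\Udot(\E(x)\la t\ra,\E(y)\la t'\ra)\neq 0$ then $t'>t$, or $t=t'$ and $x=y$, and in either case $\E(x)\la t\ra\le\E(y)\la t'\ra$. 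Finally, for each $x\in{_n\Bbb B_m}$ and $t\in\modZ$, Corollary~\ref{2con} identifies $\End_{\Udot(n,m)}(\E(x)\la t\ra)$ with the subgroup of integer multiples of the identity $2$-morphism, and this group is free of rank one, hence isomorphic to $\modZ$, since all hom-groups of $\Udot$ are free $\modZ$-modules by Proposition~\ref{BAZA}. Thus $\Udot(n,m)|_B$ is strongly upper-triangular, which completes the verification.

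I do not expect a genuine obstacle: the substantive content is already isolated in Theorem~\ref{K0} (classification of the indecomposable $1$-morphisms up to grading shift) and in Corollary~\ref{2con} (vanishing of hom-groups between distinct elements of $B$, together with the shape of their endomorphism rings), both of which are consequences of the thick calculus of~\cite{KLMS}. What remains is formal bookkeeping — checking that $\le$ is a genuine partial order rather than merely a preorder, and observing that the objects $\E(x)\la t\ra$ are pairwise distinct, the latter being automatic in any upper-triangular category because antisymmetry of $\le$ forbids distinct isomorphic objects. Once Proposition~\ref{r16} is in hand, Proposition~\ref{r24} applies to give $\HH_0(\Udot(n,m))\cong\modZ\,{_n\mathcal{B}'_m}$ and $\HH_i(\Udot(n,m))=0$ for $i>0$, which is exactly the input required for Theorem~\ref{thm1}.
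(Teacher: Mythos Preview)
Your proposal is correct and follows essentially the same approach as the paper: the paper's proof is a terse two-line argument citing Corollary~\ref{2con} for the strongly upper-triangular property and Proposition~\ref{BAZA} (where you cite the closely related Theorem~\ref{K0}) for the equivalence $(\Udot(n,m)|_B)^{\oplus}\simeq\Udot(n,m)$. Your version simply unpacks the details---explicitly defining the partial order and verifying that the endomorphism rings are $\modZ$---that the paper leaves implicit.
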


\begin{proof}
 Let $B:={_n\mathcal{B}'_m}$. Corollary \ref{2con} ensures that $\Udot(n,m)|_B$ is strongly upper-triangular.
 Proposition \ref{BAZA} ensures that the inclusion $\Udot(n,m)|_B\rightarrow\Udot(n,m)$ induces equivalence of additive
 categories $(\Udot(n,m)|_B)^\oplus\simeq \Udot(n,m)$.
\end{proof}

\subsection{Proof of Theorem \ref{thm1}}

Now we prove Theorem \ref{thm1}.

The homomorphism
\begin{gather*}
  h_{\Udot(n,m)}\col K_0(\Udot(n,m))\to \Tr(\Udot(n,m))
\end{gather*}
is an isomorphism since it factors as the composition of isomorphisms
\begin{gather*}
  K_0(\Udot(n,m)) \overset{\gamma^{-1}}{\ct}
  1_m\UA1_n
  = \Z\, {_n\mathbb{B}'_m}
  \overset{\Z\mathcal{E}'}{\ct}\Z\,{_n\mathcal{B}'_m}\cong \Tr(\Udot(n,m)).
\end{gather*}
Here $\gamma$ is given in Theorem \ref{K0},
$\mathcal{E}'\col{_n\mathbb{B}'_m}\to{_n\mathcal{B}'_m}$ maps $x\la
t\ra$ to $\mathcal{E}(x)\la t\ra$, and the last isomorphism follows from Propositions \ref{r24} and \ref{r16}.
Hence the linear functor $h_{\Udot}\col K_0(\Udot)\to\Tr(\Udot)$ is an
isomorphism.

Propositions \ref{r24} and \ref{r16} imply also that
$\HH_i(\Udot)=0$ for $i>0$.

$\hfill\Box$

\section{An alternative presentation of $\dotU^*(n,m)$ }
\label{sec:presentation-un-m}

In this section we give an algebraic presentation of the Karoubi envelope $\dotU^*(n,m)$ of
 $\U^*(n,m)$ obtained by reformulating results in \cite{KLMS}.

If $n+m\ge0$, then every object of $\dotU^*(n,m)$ is isomorphic to a
direct sum of finitely many copies of the objects
$\F^{(b)}\E^{(a)}\onen$ with $2(a-b)=m-n$.  In the following, we give
a presentation of the full subcategory of $\dotU^*(n,m)$ with objects
$\{\F^{(b)}\E^{(a)}\onen\;|\;2(a-b)=m-n\}$, which essentially
gives a presentation of $\dotU^*(n,m)$.  Using symmetry, one can
similarly define $\dotU^*(n,m)$ when $n+m\le0$, by giving a
presentation of the full subcategory of $\dotU^*(n,m)$ with objects
$\{\E^{(a)}\F^{(b)}\onen\;|\;2(a-b)=m-n\}$.

In the following we consider the full subcategory of $\dotU^*(n,m)$
with objects $\{\F^{(b)}\E^{(a)}\onen\;|\;2(a-b)=m-n\}$, where
we do not assume $n+m\ge0$ or $n+m\le0$.

Let us define:
$$
t_j^{(b,a)}=\xy
 (4,3)*{};(-4,3)*{} **\crv{(4,-2) & (-4,-2)} ?(.5)*{\bullet};
 (-4,-6);(-4,6); **[grey][|(4)]\dir{-} ?(1)*[grey][|(3)]\dir{>};
 (4,6);(4,-6); **[grey][|(4)]\dir{-} ?(1)*[grey][|(3)]\dir{>};
 (8,1)*{n};
 (1,-2)*{\scriptstyle j};
 (-3,-7)*{\scriptstyle b};
 (5,-7)*{\scriptstyle a};
 (-7,7)*{\scriptstyle b+1};
 (7,7)*{\scriptstyle a+1};
 (11,8)*{};
 (-7,-8)*{};
\endxy,
\quad u_j^{(b,a)}=\xy
 (4,-3)*{};(-4,-3)*{} **\crv{(4,2) & (-4,2)} ?(.5)*{\bullet};
 (-4,-6);(-4,6); **[grey][|(4)]\dir{-} ?(1)*[grey][|(3)]\dir{>};
 (4,6);(4,-6); **[grey][|(4)]\dir{-} ?(1)*[grey][|(3)]\dir{>};
 (8,1)*{n};
 (-1,2)*{\scriptstyle j};
 (-3,-7)*{\scriptstyle b};
 (5,-7)*{\scriptstyle a};
 (-7,7)*{\scriptstyle b-1};
 (7,7)*{\scriptstyle a-1};
 (11,8)*{};
 (-7,-8)*{};
\endxy,
$$
$$
d_\lambda^{(b,a)}=\xy
 (-3,-6);(-3,6); **[grey][|(4)]\dir{-} ?(1)*[grey][|(3)]\dir{>} ?(.5)*{\bigb{\lambda}};
 (3,6);(3,-6); **[grey][|(4)]\dir{-} ?(1)*[grey][|(3)]\dir{>};
 (7,1)*{n};
 (-2,-7)*{\scriptstyle b};
 (4,-7)*{\scriptstyle a};
 (10,8)*{};
 (-6,-8)*{};
\endxy,
\quad {d'}_{\lambda}^{(b,a)}=\xy
 (-3,-6);(-3,6); **[grey][|(4)]\dir{-} ?(1)*[grey][|(3)]\dir{>};
 (3,6);(3,-6); **[grey][|(4)]\dir{-} ?(1)*[grey][|(3)]\dir{>} ?(.5)*{\bigb{\lambda}};
 (7,1)*{n};
 (-2,-7)*{\scriptstyle b};
 (4,-7)*{\scriptstyle a};
 (10,8)*{};
 (-6,-8)*{};
\endxy,
$$
$$
b_\lambda^{(b,a)}={b^+}_\lambda^{(b,a)}=\xy
 (-3,-6);(-3,6); **[grey][|(4)]\dir{-} ?(1)*[grey][|(3)]\dir{>};
 (3,6);(3,-6); **[grey][|(4)]\dir{-} ?(1)*[grey][|(3)]\dir{>};
 (-9,1)*{b^+(s_{\lambda})};
 (7,1)*{n};
 (-2,-7)*{\scriptstyle b};
 (4,-7)*{\scriptstyle a};
 (10,8)*{};
 (-16,-8)*{};
\endxy,
\quad {b^-}_\lambda^{(b,a)}=\xy
 (-3,-6);(-3,6); **[grey][|(4)]\dir{-} ?(1)*[grey][|(3)]\dir{>};
 (3,6);(3,-6); **[grey][|(4)]\dir{-} ?(1)*[grey][|(3)]\dir{>};
 (-9,1)*{b^-(s_{\lambda})};
 (7,1)*{n};
 (-2,-7)*{\scriptstyle b};
 (4,-7)*{\scriptstyle a};
 (10,8)*{};
 (-16,-8)*{};
\endxy.
$$
We extend these definitions by setting $u_j^{(b,a)}=0$ for $a=0$ or
$b=0$.

\begin{thm}
  \label{r3}
Let $n,m$ be integers with $n-m\in2\Z$.
The full subcategory of $\dotU^*(n,m)$ with objects
$\{\F^{(b)}\E^{(a)}\onen\;|\;2(a-b)=m-n\}$ is generated as a linear
category by the morphisms
\begin{gather*}
   t_j^{(b,a)}\col\F^{(b)}\E^{(a)}\onen\to\F^{(b+1)}\E^{(a+1)}\onen,\quad j\ge0,\\
 u_j^{(b,a)}\col\F^{(b)}\E^{(a)}\onen\to\F^{(b-1)}\E^{(a-1)}\onen,\quad j\ge0,\\
 d_\lambda^{(b,a)}\col\F^{(b)}\E^{(a)}\onen\to\F^{(b)}\E^{(a)}\onen,\quad \lambda\in P(b),\\
 {d'}_{\lambda}^{(b,a)}\col\F^{(b)}\E^{(a)}\onen\to\F^{(b)}\E^{(a)}\onen,\quad \lambda\in P(a),\\
 b_\lambda^{(b,a)}\col\F^{(b)}\E^{(a)}\onen\to\F^{(b)}\E^{(a)}\onen,\quad \lambda\in P
\end{gather*}
for $a,b\ge0$, $2(a-b)=m-n$,
subject to the relations
\begin{gather}
\label{relnm} t_it_j+t_jt_i=0,\quad u_iu_j+u_ju_i=0,\\
\label{relnm2} u_it_j+t_ju_i= \tilde{c}_{1+\frac{m+n}{2}+i+j},\\
\label{relnm3} d'_\lambda t_i=\sum_{m\ge0,\nu\in P}N_{(m)\nu}^\lambda t_{i+m}d'_\nu ,\quad
d_\lambda t_i=\sum_{m\ge0,\nu\in P}N_{(m)\nu}^\lambda t_{i+m} d_\nu,\\ 
 u_i d'_\lambda=\sum_{m\ge0,\nu\in P}N_{(m)\nu}^\lambda d'_\nu u_{i+m},\quad
 u_i d_\lambda=\sum_{m\ge0,\nu\in P}N_{(m)\nu}^\lambda d_\nu u_{i+m},\\
\label{relnm4} d_\mu d_\nu=\sum_\lambda N^\lambda_{\mu\nu}d_\lambda,\quad
 d'_\mu d'_\nu=\sum_\lambda N^\lambda_{\mu\nu}d'_\lambda,\quad
 b_\mu b_\nu=\sum_\lambda N^\lambda_{\mu\nu}b_\lambda,\\
\label{relnm5} [d'_\lambda,d_\mu]=[b_\lambda,t_i]=[b_\lambda,u_i]=[b_\lambda,d_\mu]=[b_\lambda,d'_\mu]=0,
\end{gather}
where we omit the superscripts always assuming that the last
superscript in each relation is $(b,a)$.  (For example, the first
relation in \eqref{relnm} is
$t_i^{(b+1,a+1)}t_j^{(b,a)}+t_j^{(b+1,a+1)}t_i^{(b,a)}=0$.)
In \eqref{relnm2}, $\tilde{c}_k$ for $k\in\Z$ is defined by
\begin{gather*}
  \tilde{c}_k=\sum_{i,i',i''\ge0,i+i'+i''=k}b_{(i)}(-1)^{i'}d_{(1^{i'})}d'_{(i'')}.
\end{gather*}
Note that $\tilde{c}_k=0$ for $k<0$.
\end{thm}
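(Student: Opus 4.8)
The plan is to identify the presented category with $\dotU^*(n,m)$ cut down to the relevant objects. Write $\mathcal{C}'$ for the linear category with objects $\F^{(b)}\E^{(a)}\onen$ ($a,b\ge0$, $2(a-b)=m-n$) generated by the symbols $t_j^{(b,a)},u_j^{(b,a)},d_\lambda^{(b,a)},{d'}_\lambda^{(b,a)},b_\lambda^{(b,a)}$ subject to \eqref{relnm}--\eqref{relnm5}, and write $\mathcal{C}$ for the full subcategory of $\dotU^*(n,m)$ on the same objects. First I would define the evident linear functor $\Phi\colon\mathcal{C}'\to\mathcal{C}$: the identity on objects, sending each generating symbol to the thick-calculus $2$-morphism of the same name. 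As $\Phi$ respects the compositions built into the diagrammatic calculus, well-definedness reduces to checking that the named diagrams satisfy \eqref{relnm}--\eqref{relnm5}, and all of these are known identities in the thick calculus of \cite{KLMS}: \eqref{relnm} is the (nilHecke) antisymmetry of nested dotted thin caps and cups absorbed into thick strands; \eqref{relnm3}--\eqref{relnm4} are Pieri's rule \eqref{pieri} and the Littlewood--Richardson expansion of a product of Schur functions, applied after sliding a Schur-labelled dot past a splitter; and \eqref{relnm5} records that Schur-labelled bubbles are central and that $d,d'$ act on the two disjoint thick strands. The only relation needing a genuine computation is \eqref{relnm2}: stacking $t_j$ over $u_i$ and $u_i$ over $t_j$ and invoking the thick versions of the curl relations \eqref{rei1}--\eqref{rei2}, the sum of the two composites collapses to a single expression built from a Schur-labelled bubble together with dots on the remaining thick strands, which one matches term by term with $\tilde c_{1+\frac{m+n}{2}+i+j}$; the vanishing $\tilde c_k=0$ for $k<0$ is precisely the positivity of bubbles \eqref{eq_positivity_bubbles}.

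Next I would show $\Phi$ is full. By Proposition \ref{BAZA} every hom-module of $\mathcal{C}$ is free over $\Z$ with basis the diagrams $f^{b,a,i,j}_{\lambda,\mu,\nu,\sigma,\tau}\onen$, now ranging over all internal degrees since we work in $\dotU^*$. Read from bottom to top, such a diagram is the vertical composite of: a morphism $\F^{(b)}\E^{(a)}\onen\to\F^{(b-j)}\E^{(a-j)}\onen$ decorated by $s_\sigma$, which by the splitter identities of \cite{KLMS} lies in the $\Z$-span of $j$-fold composites of the $u$'s; thick dots $d_\mu$ and $d'_\lambda$ carrying $s_\mu$ and $s_\lambda$; a morphism $\F^{(b-j)}\E^{(a-j)}\onen\to\F^{(b+\delta)}\E^{(a+\delta)}\onen$ (here $b-j+i=b+\delta$) decorated by $s_\nu$, in the $\Z$-span of $i$-fold composites of the $t$'s; and a Schur-labelled bubble $b_\tau$. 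Hence every basis morphism, and so every morphism, of $\mathcal{C}$ lies in the image of $\Phi$.

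The substance of the proof is faithfulness, which I would obtain from a normal form. Relations \eqref{relnm}--\eqref{relnm5} are designed for a rewriting procedure: \eqref{relnm2} moves every $u$ past every $t$ modulo terms with strictly fewer $t$- and $u$-factors that are built from bubbles and $d,d'$-morphisms, so induction on the number of $t,u$-factors brings any composite of generators to a $\Z$-combination of monomials in which all $u$'s precede all $t$'s; \eqref{relnm} makes the $u$-block a free $\Z$-module on the Schur functions $s_\sigma$ ($\sigma\in P(j)$) and the $t$-block a free $\Z$-module on the $s_\nu$ ($\nu\in P(i)$); \eqref{relnm3} moves the $d,d'$-factors into the middle; and \eqref{relnm4}--\eqref{relnm5} collapse the $d$-, $d'$- and $b$-factors into single Schur-labelled morphisms $d_\mu$ ($\mu\in P(b-j)$), $d'_\lambda$ ($\lambda\in P(a-j)$), $b_\tau$ ($\tau\in P$). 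One must also see that the procedure forces $i,j\le\min(a,b)$ (for $u$ this is immediate from $u_j=0$ when $a=0$ or $b=0$) and distinct indices within each block, so that the set $N$ of resulting normal forms between $\F^{(b)}\E^{(a)}\onen$ and $\F^{(b+\delta)}\E^{(a+\delta)}\onen$ is indexed by exactly the data $(i,j,\lambda,\mu,\nu,\sigma,\tau)$, $\delta=i-j$, that indexes $B^+(n,a,b,\delta)$. Granting this, $N$ spans $\mathcal{C}'(\F^{(b)}\E^{(a)}\onen,\F^{(b+\delta)}\E^{(a+\delta)}\onen)$ and has the same finite cardinality as $B^+(n,a,b,\delta)$ in each internal degree (all relations being homogeneous for the grading inherited from $\dotU$, all the modules in sight are graded with finite-dimensional components), so by fullness the composite $\Z N\twoheadrightarrow\mathcal{C}'(-,-)\xrightarrow{\Phi}\mathcal{C}(-,-)\cong\Z B^+(n,a,b,\delta)$ is a surjection of free $\Z$-modules of equal finite rank in each degree, hence an isomorphism. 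Therefore $\Phi$ is injective (and $N$ is in fact a basis), and $\Phi$ is an isomorphism of linear categories, which is the assertion. The main obstacle is this last paragraph: proving that the rewriting terminates in the bounded normal form --- in particular that words with too many creation/annihilation moves, with repeated indices in a block, or with cascading bubble-and-$d,d'$ remainders from \eqref{relnm2} all reduce --- and then verifying the degreewise count $|N_t|=|B^+(n,a,b,\delta)_t|$.
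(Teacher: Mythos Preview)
Your plan is correct and follows essentially the same route as the paper's own (explicitly labeled) sketch proof: define the comparison functor from the presented category to the diagrammatic one, verify the relations via thick-calculus identities from \cite{KLMS}, deduce fullness from the fact that the basis elements $f^{b,a,i,j}_{\lambda,\mu,\nu,\sigma,\tau}\onen$ factor as composites $t_\nu d_\mu d'_\lambda u_\sigma b_\tau$, and obtain faithfulness by showing these normal forms span the presented category. Your degreewise rank-counting formulation of the last step (a surjection of free $\Z$-modules of equal finite rank is an isomorphism) is the same argument the paper leaves as ``easily checked,'' just made explicit.
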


\begin{proof}
Here we give a sketch proof since the result is not used in the rest
of the paper.

Let $V(n,m)$ be the linear category with $\Ob(V(n,m))=\Ob(\dotU^*(n,m))$
and with generators and relations as stated
in the theorem. We define a linear functor
\begin{gather*}
  \mathcal F\col V(n,m)\rightarrow \dotU^*(n,m)|_{\{\F^{(b)}\E^{(a)}\onen\;|\;2(a-b)=m-n\}}
\end{gather*}
which maps the objects and the generating morphisms in an obvious way.
By checking the relations \eqref{relnm}--\eqref{relnm5}, one can
verify that $\mathcal{F}$ is well defined.  (For this verification, we
need identities proved in \cite{KLMS}.)
Since $\mathcal F$ is identity on objects, it suffices to prove that
$\mathcal F$ is full and faithful.

We first prove that $\mathcal F$ is full.
For $i=0,\dots,\min(a,b)$ and $\lambda\in P(i)$, we set
\begin{gather*}
t_\lambda^{(b,a)}=t_{\lambda_1+i-1}t_{\lambda_2+i-2}\dots t_{\lambda_{i-1}+1}t_{\lambda_i}^{(b,a)},\\
u_\lambda^{(b,a)}=u_{\lambda_i}u_{\lambda_{i-1}+1}\dots u_{\lambda_2+i-2}u_{\lambda_1+i-1}^{(b,a)}.
\end{gather*}
where we omit the obvious superscripts.
For $a,b\ge0$, $\delta\in\Z$, let us define the following subsets of
$V(n,m)$
\begin{gather*}
 B(n,a,b,\delta) := \\
\{t_\nu d_\mu d'_\lambda u_\sigma b_\tau^{(b,a)} \;|\;
0\leq i,j\leq \min(a,b), \delta=i-j, \lambda \in P(a-j), \mu \in P(b-j), \nu \in P(i), \sigma \in P(j), \tau \in P \}.
\end{gather*}
 Using Lemma \ref{BAZAstar} below
we see that the functor $\mathcal F$ sends $B(n,a,b,\delta)$
bijectively to the base $B^+(n,a,b,\delta)$ of
$\dotU^*(\F^{(b)}\E^{(a)}\onen,\F^{(b+\delta)}\E^{(a+\delta)}\onen)$.
Hence $\F$ is full.

To see that $\F$ is faithful, one has only to see that every morphism
can be expressed as a linear combination of elements in
$B(n,a,b,\delta)$.  This is easily checked.
\end{proof}

The following lemma is a $\dotU^*$-version of Proposition \ref{BAZA} that follows immediately.
\begin{lem}
\label{BAZAstar}
Let $a,b\ge0$, $\delta\in\Z$. The $\Z$-graded module
\begin{equation*}
 \dotU^*(\F^{(b)}\E^{(a)}\onen,\F^{(b+\delta)}\E^{(a+\delta)}\onen)
\end{equation*}
is free with basis given by the elements of $B^+(n,a,b,\delta)$.  Similarly,
the $\Z$-graded module
\begin{equation*}
 \dotU^*(\E^{(a)}\F^{(b)}\onen,\E^{(a+\delta)}\F^{(b+\delta)}\onen)
\end{equation*}
is free with basis given by the elements of $B^-(n,a,b,\delta)$.
\end{lem}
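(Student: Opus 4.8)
The plan is to deduce this directly from Proposition~\ref{BAZA} together with the defining relation
\begin{gather*}
  \dotU^*(n,m)(f,g) = \bigoplus_{t\in\Z}\dotU(n,m)(f,g\la t\ra)
\end{gather*}
that characterizes $\dotU^*$. First I would specialize this relation to $f=\F^{(b)}\E^{(a)}\onen$ and $g=\F^{(b+\delta)}\E^{(a+\delta)}\onen$, obtaining
\begin{gather*}
  \dotU^*(\F^{(b)}\E^{(a)}\onen,\F^{(b+\delta)}\E^{(a+\delta)}\onen)
  = \bigoplus_{t\in\Z}\dotU(\F^{(b)}\E^{(a)}\onen,\F^{(b+\delta)}\E^{(a+\delta)}\onen\la t\ra),
\end{gather*}
where the $\Z$-grading on the left-hand side is by the summand index $t$. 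By Proposition~\ref{BAZA}, each summand on the right is a free $\Z$-module with basis the degree-$t$ elements of $B^+(n,a,b,\delta)$.

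Next I would observe that each element $f^{b,a,i,j}_{\lambda,\mu,\nu,\sigma,\tau}\onen$ of $B^+(n,a,b,\delta)$ is a homogeneous $2$-morphism of $\dotU$: it is a composite of homogeneous generators (dots, crossings, cups, caps, thick-strand boxes labelled by Schur polynomials, and the bubble $b^+(s_\tau)$), and both vertical and horizontal composition are additive on degrees. Hence $B^+(n,a,b,\delta)$ is the disjoint union over $t\in\Z$ of its degree-$t$ subsets, and the degree of each basis element as a $2$-morphism of $\dotU$ agrees with its degree in the graded module $\dotU^*(\F^{(b)}\E^{(a)}\onen,\F^{(b+\delta)}\E^{(a+\delta)}\onen)$. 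Summing the statement of Proposition~\ref{BAZA} over all $t$ then shows that this graded module is free with homogeneous basis $B^+(n,a,b,\delta)$.

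Finally, the case of the modules $\dotU^*(\E^{(a)}\F^{(b)}\onen,\E^{(a+\delta)}\F^{(b+\delta)}\onen)$ and the sets $B^-(n,a,b,\delta)$ is handled identically, invoking the second half of Proposition~\ref{BAZA}. I do not expect a genuine obstacle: the entire content is unwinding the definition $\dotU^*(n,m)(f,g)=\bigoplus_{t}\dotU(n,m)(f,g\la t\ra)$ and recording that the proposed basis elements are homogeneous, which is precisely why the lemma follows immediately from Proposition~\ref{BAZA}.
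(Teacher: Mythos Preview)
Your proposal is correct and is precisely the unpacking of the paper's one-line proof, which simply states that the lemma ``is a $\dotU^*$-version of Proposition~\ref{BAZA} that follows immediately.'' You have spelled out exactly why: specialize the defining direct-sum decomposition of $\dotU^*(n,m)(f,g)$ to the given source and target, apply Proposition~\ref{BAZA} degree by degree, and use homogeneity of the basis elements to assemble the result.
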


\section{The linear category $_\modZ \dU\LL$}
\label{sec:linear-category-modz}

\subsection{The $\modQ $-algebra $\UL$}

Recall that as a $\modQ $-algebra, the current algebra $\UL$ has the following
presentation.  The generators are $E_i,F_i$ and $H_i$ for $i\geq 0$,
where $X_i=X\otimes t^i$. The relations are
\begin{gather*}
  [H_i,H_j]=[E_i,E_j]=[F_i,F_j]=0,\label{rel1}\\
  [H_i,E_j]=2E_{i+j},\quad [H_i,F_j]=-2F_{i+j},\quad [E_i,F_j]=H_{i+j}
\end{gather*}
for $i,j\ge 0$.

Let $\bfU^+\LL$, $\bfU^-\LL$ and $\bfU^0\LL$ be subalgebras of
$\UL$ generated by $\{E_i\;|\;i\geq 0\}$, $\{F_i\;|\;i\geq 0\}$ and
$\{H_i\;|\;i\geq 0\}$ respectively.

For every $i\geq 0$ we define
\begin{gather*}
  |E_i|=2,\quad |F_i|=-2,\quad |H_i|=0
\end{gather*}
and extend this definition of degree to $\UL$ by setting  $|xy|=|x|+|y|$.

The $\modQ$-algebra $\UL$ has a basis given by the elements
  \begin{gather*}
    F_{i_1}^{a_1}\dots F_{i_r}^{a_r}H_{j_1}^{b_1}\dots H_{j_s}^{b_s}E_{k_1}^{c_1}\dots E_{k_t}^{c_t},
  \end{gather*}
where
  \begin{gather*}
    r\ge 0,\quad  i_1>\dots >i_r\ge 0,\quad  a_1,\ldots ,a_r\ge 1,\\
    s\ge 0,\quad  j_1>\dots >j_s\ge 0,\quad  b_1,\ldots ,b_s\ge 1,\\
    t\ge 0,\quad  k_1>\dots >k_t\ge 0,\quad  c_1,\ldots ,c_t\ge 1.
  \end{gather*}

\subsection{The integral form $_\modZ \bfU \LL$ of $\bfU\LL$}

For $a,i\ge 0$, define the divided powers of $E_i$ and $F_i$ by
\begin{gather*}
  E_i^{(a)}=\frac{1}{a!}E_i^a,\quad
  F_i^{(a)}=\frac{1}{a!}F_i^a.
\end{gather*}
Note that $E_i^{(0)}=F_i^{(0)}=1$.

Let $\UZL$ denote the $\modZ $-subalgebra of $\UL$
generated by $E_i^{(a)}$, $F_i^{(a)}$, $i\ge 0$, $a\ge 1$.
Set
\begin{gather*}
  \UZ^0\LL= \UZL \cap \bfU^0\LL,\\
  \UZ^+\LL= \UZL \cap \bfU^+\LL,\\
  \UZ^-\LL= \UZL \cap \bfU^-\LL,
\end{gather*}
which are $\modZ $-subalgebras of $\UZL$.

For $j\geq 0$ set $H_{j,0}=1$. For $j\ge 0$ and $b>0$, set recursively
\begin{gather*}
  bH_{j,b}=\sum_{l=1}^b(-1)^{l-1}H_{j,b-l}H_{lj},
\end{gather*}
or explicitly
\begin{gather*}
  H_{j,b}=\frac{1}{b!}\sum_{\lambda \in P(b)}M_{b,\lambda } H_j^{\lambda _1}H_{2j}^{\lambda _2}H_{3j}^{\lambda _3}\cdots
\end{gather*}
with $M_{b, \lambda}$ as in \eqref{eq_M}.

Define a ring homomorphism $\phi\col \Sym\rightarrow \bfU^0\LL$ by
$$
\phi(\elem_j)=H_{1,j}.
$$
Then we have
\begin{gather}
\label{hs}
 H_j=\phi(p_j),\quad H_{i,b}=\phi(\elem_{i,b}).
\end{gather}
Let $\UZ^P\LL\subset \UZ^0\LL$ be the image of $\phi$.

In \cite{Gar}, Garland defined an integral basis of the loop algebra.

\begin{prop}[Garland \cite{Gar}, Thm. 5.8]
  \label{r15}
  The $\modZ$-algebra $\UZL$ is a free abelian group with basis given by the elements
  \begin{gather*}
    F_{i_1}^{(a_1)}\dots F_{i_r}^{(a_r)}H_{j_1,b_1}\dots H_{j_s,b_s}E_{k_1}^{(c_1)}\dots E_{k_t}^{(c_t)},
  \end{gather*}
  where
  \begin{gather*}
    r\ge 0,\quad  i_1>\dots >i_r\ge 0,\quad  a_1,\ldots ,a_r\ge 1,\\
    s\ge 0,\quad  j_1>\dots >j_s\ge 0,\quad  b_1,\ldots ,b_s\ge 1,\\
    t\ge 0,\quad  k_1>\dots >k_t\ge 0,\quad  c_1,\ldots ,c_t\ge 1.
  \end{gather*}
\end{prop}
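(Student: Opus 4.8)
The plan is to prove Proposition~\ref{r15} along the lines of Garland's original argument, specialized to the $t$‑nonnegative part $\LL$ of the loop algebra. Write $\mathcal{B}_{\mathrm{PBW}}$ for the proposed set of ordered monomials. The proof splits into a linear‑independence statement and a spanning statement over $\modZ$, and essentially all the work is in the latter. \emph{Linear independence.} Recall the $\modQ$‑basis of $\UL$ by the ordered monomials $F_{i_1}^{a_1}\cdots F_{i_r}^{a_r}H_{j_1}^{b_1}\cdots H_{j_s}^{b_s}E_{k_1}^{c_1}\cdots E_{k_t}^{c_t}$ stated above, which in particular gives the PBW decomposition $\UL\cong\bfU^-\LL\otimes\bfU^0\LL\otimes\bfU^+\LL$ of $\modQ$‑vector spaces. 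Each $E_k^{(c)}$ (resp.\ $F_i^{(a)}$) is a nonzero rational multiple of $E_k^c$ (resp.\ $F_i^a$), so the divided‑power monomials are still $\modQ$‑bases of $\bfU^+\LL$ and of $\bfU^-\LL$; and for the Cartan part the transition from the monomials in the $H_k$ to the monomials in the $H_{j,b}$ is invertible over $\modQ$, via the recursion $bH_{j,b}=\sum_{l=1}^b(-1)^{l-1}H_{j,b-l}H_{lj}$ together with $H_{0,b}=\binom{H_0}{b}$. Hence $\mathcal{B}_{\mathrm{PBW}}$ is a $\modQ$‑basis of $\UL$, so it is $\modZ$‑linearly independent, and it only remains to show that $\mathcal{B}_{\mathrm{PBW}}$ $\modZ$‑spans $\UZL$.

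\emph{The Cartan part.} First one identifies $\UZ^0\LL=\UZL\cap\bfU^0\LL$. The elements $H_{0,b}=\binom{H_0}{b}$ are Kostant's divided‑power Cartan elements for the finite‑dimensional $\mathfrak{sl}_2$ and lie in $\UZL$ classically; the key new integrality input is that $H_{1,j}\in\UZL$ for all $j\ge1$, which one gets by computing a product of opposite divided powers such as $E_1^{(j)}F_0^{(j)}$ and extracting, via the PBW projection onto $\bfU^0\LL$, a factor equal to $\pm H_{1,j}$ plus lower‑degree terms; iterating the same computation yields $H_{i,b}\in\UZL$ for all $i,b$. Using the ring homomorphism $\phi\col\Sym\to\bfU^0\LL$ with $\phi(\elem_j)=H_{1,j}$ and $\phi(\elem_{i,b})=H_{i,b}$ (see~\eqref{hs}), together with the fact that the monomials $\elem_{t_1,j_1}\cdots\elem_{t_s,j_s}$ with $t_1>\cdots>t_s\ge1$ form a $\modZ$‑basis of $\Sym$ (basis~\eqref{ba2}), one concludes that $\UZ^P\LL=\phi(\Sym)$ is $\modZ$‑free on the monomials $H_{j_1,b_1}\cdots H_{j_s,b_s}$ with $j_1>\cdots>j_s\ge1$; combined with the classical Kostant form $\modZ\langle H_{0,b}\rangle$ this exhibits $\UZ^0\LL$ with precisely the $H$‑monomials of $\mathcal{B}_{\mathrm{PBW}}$ as a $\modZ$‑basis. (That $\UZ^0\LL$ is no larger follows by comparison with the $\modQ$‑basis and the integrality just established.)

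\emph{Spanning of $\UZL$ by straightening.} One assembles integral rewriting rules: (i) $E_i^{(a)}E_i^{(a')}=\binom{a+a'}{a}E_i^{(a+a')}$ and $E_i^{(a)}E_k^{(c)}=E_k^{(c)}E_i^{(a)}$ for $i\neq k$, and likewise for $F$, since the $E_i$ (resp.\ the $F_i$) pairwise commute; (ii) integral straightening rules moving each $H_{j,b}$ past an $E_k^{(c)}$ or $F_i^{(a)}$, obtained from $[H_i,E_j]=2E_{i+j}$ and $[H_i,F_j]=-2F_{i+j}$ and the Cartan basis above; and, crucially, (iii) a Garland commutation identity
\begin{gather*}
  E_i^{(a)}F_j^{(b)}=\sum_{s\ge0}F_j^{(b-s)}\,\Gamma_{i,j,s}\,E_i^{(a-s)},
\end{gather*}
where $\Gamma_{i,j,s}\in\UZ^0\LL$ is an explicit \emph{integral} polynomial in the $H_{k,l}$, proved by a double induction on $a$ and $b$ starting from $[E_i,F_j]=H_{i+j}$. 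Granting (i)--(iii), any word in the generators $E_k^{(c)},F_i^{(a)}$ of $\UZL$ can be rewritten, by a terminating process (confluence checked as in Bergman's diamond lemma, or by induction on a suitable length‑plus‑degree statistic), as a $\modZ$‑linear combination of elements of $\mathcal{B}_{\mathrm{PBW}}$. With the independence above, this completes the proof.

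The main obstacle is the Cartan‑level integrality underlying step (iii) and the claim $H_{1,j}\in\UZL$: one must show that commuting $E_i^{(a)}$ past $F_j^{(b)}$ produces an \emph{integral} polynomial in the elements $H_{k,l}$ rather than merely a rational polynomial in the $H_k$. This phenomenon — the reason the $H_{j,b}$, and not the $H_j$, must appear in the basis — stems from the fact that for $i\neq j$ the pair $E_i,F_j$ does not span an $\mathfrak{sl}_2$‑triple, and its verification is the technical heart of \cite{Gar}; the remaining steps are the routine PBW‑straightening bookkeeping sketched above.
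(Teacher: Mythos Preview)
The paper does not prove this proposition at all: it is stated with attribution to Garland \cite[Thm.~5.8]{Gar} and used as a black box. There is therefore no proof in the paper to compare your proposal against.

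Your sketch is a reasonable outline of how Garland's argument goes, and you correctly identify the heart of the matter: the integrality of the commutation of $E_i^{(a)}$ past $F_j^{(b)}$, and in particular the fact that the Cartan factors appearing are integral polynomials in the $H_{k,l}$ rather than merely in the $H_k$. That said, as written this is an outline rather than a proof. Step~(iii), the Garland commutation identity with $\Gamma_{i,j,s}\in\UZ^0\LL$, is precisely the content of Garland's theorem and cannot be ``granted''; likewise the claim $H_{1,j}\in\UZL$ via ``extracting, via the PBW projection onto $\bfU^0\LL$, a factor equal to $\pm H_{1,j}$ plus lower-degree terms'' is exactly the delicate computation in \cite{Gar} and is not established by your sketch. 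The straightening in step~(ii), moving $H_{j,b}$ past $E_k^{(c)}$ integrally, is also not obvious from $[H_i,E_j]=2E_{i+j}$ alone and requires care. So your proposal correctly locates the difficulties but does not resolve them; since the paper itself defers to Garland, this is consistent with the paper's treatment.
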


From this proposition and the basis for symmetric functions given in equation \eqref{ba2} the next lemma follows easily.

\begin{lem}
The $\Z$-subalgebra $\UZ^P\LL$ is free with basis given by the elements
  \begin{gather*}
    H_{j_1,b_1}\dots H_{j_s,b_s},
  \end{gather*}
where
  \begin{gather*}
    s\ge 0,\quad  j_1>\dots >j_s\ge 0,\quad  b_1,\ldots ,b_s\ge 1.
  \end{gather*}
Thus, the map $\phi\col \Sym\rightarrow \UZ^P\LL$ is an isomorphism.
\end{lem}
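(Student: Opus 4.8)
The plan is to deduce the lemma by transporting the $\Z$-basis \eqref{ba2} of $\Sym$ through $\phi$ and recognising its image inside Garland's basis from Proposition \ref{r15}. First I would observe that $\phi\col\Sym\to\bfU^0\LL$ is a well-defined ring homomorphism: since $\Sym=\Z[\elem_1,\elem_2,\dots]$ is a polynomial ring on the elementary symmetric functions, the assignment $\phi(\elem_j)=H_{1,j}$ extends uniquely to a ring homomorphism. By definition $\UZ^P\LL$ is the image of $\phi$, so $\phi\col\Sym\to\UZ^P\LL$ is surjective, and it remains to check that the image of the basis \eqref{ba2} is a $\Z$-basis of $\UZ^P\LL$.

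Since $\phi$ is multiplicative and $\phi(\elem_{i,b})=H_{i,b}$ by \eqref{hs}, a basis element $\elem_{t_1,j_1}\cdots\elem_{t_s,j_s}$ of \eqref{ba2} (so $t_1>\dots>t_s\geq 1$, $j_1,\dots,j_s\geq 1$) is sent to the monomial $H_{t_1,j_1}\cdots H_{t_s,j_s}$; these are precisely the monomials listed in the statement. By surjectivity of $\phi$ together with the fact that \eqref{ba2} spans $\Sym$ over $\Z$, these monomials span $\UZ^P\LL$ over $\Z$. For $\Z$-linear independence I would invoke Proposition \ref{r15}: specialising Garland's integral basis of $\UZL$ to the case with no $F$'s and no $E$'s (i.e. $r=t=0$) shows that the monomials $H_{j_1,b_1}\cdots H_{j_s,b_s}$ with $j_1>\dots>j_s\geq 0$, $b_i\geq 1$ form part of an integral basis of $\UZL$, hence are $\Z$-linearly independent in $\UZL$ and therefore also in $\bfU^0\LL$. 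Combining the two points, these monomials are a $\Z$-basis of $\UZ^P\LL$; in particular $\phi$ carries a $\Z$-basis to a $\Z$-basis, so $\phi\col\Sym\to\UZ^P\LL$ is an isomorphism.

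I do not anticipate any genuine difficulty: the argument is a routine ``basis maps to basis'' comparison, relying only on the three already-available facts that \eqref{ba2} is a $\Z$-basis of $\Sym$, that $\phi(\elem_{i,b})=H_{i,b}$, and that Garland's $H$-monomials form part of an integral basis of $\UL$. The one point that calls for a line of care is the index bookkeeping, namely verifying that the products $\elem_{t_1,j_1}\cdots\elem_{t_s,j_s}$ of \eqref{ba2} correspond under the multiplicative map $\phi$ exactly to the monomials $H_{j_1,b_1}\cdots H_{j_s,b_s}$ in the statement, with no repetition or collapsing. An equivalent packaging, if one prefers to suppress explicit reference to Garland's full basis, is to observe that injectivity of $\phi$ is the same as algebraic independence of the $H_{1,j}$, $j\geq 1$, in $\bfU^0\LL$, which again is immediate from Proposition \ref{r15}; the stated basis description then follows at once.
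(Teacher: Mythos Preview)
Your approach is exactly the paper's: the authors simply say the lemma ``follows easily'' from Proposition~\ref{r15} and the $\Z$-basis \eqref{ba2} of $\Sym$, and your write-up spells out precisely that comparison.

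One point of index bookkeeping that you flag but do not actually resolve: the basis \eqref{ba2} has $t_1>\dots>t_s\ge 1$, so under $\phi$ it produces only the monomials $H_{j_1,b_1}\cdots H_{j_s,b_s}$ with $j_s\ge 1$, whereas the lemma as stated allows $j_s\ge 0$. These are \emph{not} ``precisely the monomials listed in the statement.'' In fact $H_{0,b}$ involves $H_0$ (e.g.\ $H_{0,1}=H_0$), which is not in the subalgebra generated by the $H_{1,j}$, hence not in $\operatorname{im}\phi=\UZ^P\LL$. This is consistent with Corollary~\ref{r15_a}, where the factor $H_{0,b_0}$ is kept separate from $\phi(s_\tau)$. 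So the lemma should read $j_s\ge 1$; your argument is correct for that (intended) statement, but you should note the discrepancy rather than assert a match.
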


By using the Schur functions as a base, we obtain the following corollary.

\begin{cor}
  \label{r15_a}
  The $\Z$-algebra $\UZL$ is free with basis given by the elements
  \begin{gather*}
    F_{i_1}^{(a_1)}\dots F_{i_r}^{(a_r)}\phi(s_\tau)H_{0,b_0}E_{k_1}^{(c_1)}\dots E_{k_t}^{(c_t)},
  \end{gather*}
  where
  \begin{gather*}
    r\ge 0,\quad  i_1>\dots >i_r\ge 0,\quad  a_1,\ldots ,a_r\ge 1,\\
    \tau\in P,\\
    b_0\geq 0,\\
    t\ge 0,\quad  k_1>\dots >k_t\ge 0,\quad  c_1,\ldots ,c_t\ge 1.
  \end{gather*}
\end{cor}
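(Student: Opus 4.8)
The plan is to derive the asserted basis from Garland's basis (Proposition~\ref{r15}) by an integral change of basis that only modifies the ``middle'' factor $H_{j_1,b_1}\cdots H_{j_s,b_s}$. First I would isolate the $H_0$-part: since the exponents $j_1>\dots>j_s\ge 0$ are strictly decreasing, at most one of the $j_l$ is zero, and splitting off that factor --- and writing $H_{0,b_0}$ with $b_0=0$, so that $H_{0,0}=1$, when no $j_l$ vanishes --- gives a bijection between the indexing data of Garland's middle factor and the set of pairs consisting of an ordered product $H_{j_1,b_1}\cdots H_{j_p,b_p}$ with $j_1>\dots>j_p\ge 1$ together with an integer $b_0\ge 0$. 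This identifies the pure-$H$ part of Garland's basis with the set of all products $H_{j_1,b_1}\cdots H_{j_p,b_p}\cdot H_{0,b_0}$ with $j_1>\dots>j_p\ge1$ and $b_0\ge0$, and hence yields a $\Z$-module decomposition $\UZ^0\LL=\bigoplus_{b_0\ge 0}\UZ^P\LL\cdot H_{0,b_0}$ in which the ordered products $H_{j_1,b_1}\cdots H_{j_p,b_p}$ (with $j_1>\dots>j_p\ge1$) form a $\Z$-basis of $\UZ^P\LL$. This last point uses the identity $H_{j,b}=\phi(\elem_{j,b})$ for $j\ge1$ from \eqref{hs}, the basis \eqref{ba2} of $\Sym$, and the preceding lemma that $\phi\col\Sym\to\UZ^P\LL$ is a $\Z$-algebra isomorphism.

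Next I would change basis inside $\UZ^P\LL$. Since $\{s_\tau\mid\tau\in P\}$ is a $\Z$-basis of $\Sym$ and $\phi$ is an isomorphism onto $\UZ^P\LL$, the set $\{\phi(s_\tau)\mid\tau\in P\}$ is a $\Z$-basis of $\UZ^P\LL$. Feeding this into the decomposition above shows that $\{\phi(s_\tau)\,H_{0,b_0}\mid\tau\in P,\ b_0\ge 0\}$ is a $\Z$-basis of $\UZ^0\LL$.

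Finally I would reinstate the outer $E$- and $F$-monomials. By Proposition~\ref{r15}, for each fixed pair of monomials $F_{i_1}^{(a_1)}\cdots F_{i_r}^{(a_r)}$ and $E_{k_1}^{(c_1)}\cdots E_{k_t}^{(c_t)}$ satisfying the stated ordering conditions, the products $F_{i_1}^{(a_1)}\cdots F_{i_r}^{(a_r)}\,h\,E_{k_1}^{(c_1)}\cdots E_{k_t}^{(c_t)}$ with $h$ running over the pure-$H$ Garland basis form part of a $\Z$-basis of $\UZL$; equivalently, left and right multiplication by these monomials is injective on $\UZ^0\LL$, and $\UZL$ is the internal direct sum of the subgroups $F_{i_1}^{(a_1)}\cdots F_{i_r}^{(a_r)}\cdot\UZ^0\LL\cdot E_{k_1}^{(c_1)}\cdots E_{k_t}^{(c_t)}$. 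Substituting the basis $\{\phi(s_\tau)H_{0,b_0}\}$ of $\UZ^0\LL$ found above and taking the union over all admissible outer monomials yields precisely the claimed basis.

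There is essentially no hard step here; the argument is a reorganization of Garland's theorem combined with the ring isomorphism $\Sym\cong\UZ^P\LL$. The one place where a genuine (if routine) argument is needed is the assertion that left and right multiplication by the fixed $F$- and $E$-monomials carries a $\Z$-basis of $\UZ^0\LL$ to a $\Z$-linearly independent family which, over all choices of outer monomials, spans $\UZL$ --- but this is exactly the PBW-type content of Garland's basis, so nothing further is required.
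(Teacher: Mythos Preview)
Your proposal is correct and is exactly the argument the paper has in mind: the paper's entire proof is the one-line remark ``By using the Schur functions as a base, we obtain the following corollary,'' and you have simply written out what that means---separating off the $H_{0,b_0}$ factor from Garland's basis, then replacing the basis \eqref{ba2} of $\Sym\cong\UZ^P\LL$ by the Schur basis via the preceding lemma. Your careful treatment of the triangular decomposition $\UZL\cong\UZ^-\LL\otimes\UZ^0\LL\otimes\UZ^+\LL$ implicit in Garland's theorem is appropriate and nothing further is needed.
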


\subsection{The idempotented version $\dUL$}
The idempotented version $\dUL$ of $\UL$ is the $\modQ $-linear category
defined as follows.  The objects of $\dUL$ are integers.  For
$m,n\in \modZ $, the $\modQ $-module $\dUL(m,n)$ is defined by
\begin{gather*}
  \dUL(m,n)= \UL/( \UL(H_0-m)+ (H_0-n)\UL).
\end{gather*}
The element in $\dUL(m,n)$ represented by $x\in \UL$ is denoted by
\begin{gather*}
  1_nx1_m=x1_m=1_nx,
\end{gather*}
which is zero unless $n-m=|x|$. Composition in $\dUL$ is induced by multiplication in $\UL$, i.e.,
\begin{gather*}
  (1_px1_n)(1_ny1_m)=1_pxy1_m,
\end{gather*}
for $x,y\in \UL$, $m,n,p\in \modZ $, $p-n=|x|$, $n-m=|y|$.  The identity morphism for $n\in \modZ $ is
denoted by $1_n$.

\subsection{The idempotented integral form $\dUZL$}

The idempotented version $\dUZL$ of the integral form $\UZL$ is the
linear subcategory of $\dUL$, such that $\Ob(\dUZL)=\modZ $ and
\begin{gather*}
  \dUZL(m,n)= 1_m( \UZL ) 1_n\subset  \dUL(m,n).
\end{gather*}
It follows that $\dUZL(m,n)$ has a basis as a free $\modZ$-module given by the elements
\begin{gather}
  \label{e3}
  1_mF_{i_1}^{(a_1)}\dots F_{i_r}^{(a_r)}\phi(s_\tau)E_{k_1}^{(c_1)}\dots E_{k_t}^{(c_t)}1_n,
\end{gather}
  \begin{gather*}
    r\ge 0,\quad  i_1>\dots >i_r\ge 0,\quad  a_1,\ldots ,a_r\ge 1,\\
    \tau\in P,\\
    t\ge 0,\quad  k_1>\dots >k_t\ge 0,\quad  c_1,\ldots ,c_t\ge 1,\\
    c_1+\dots +c_t-(a_1+\dots +a_r)=2(n-m).
  \end{gather*}

The above base will be used for the case $m+n\geq 0$. For $m+n\leq 0$
we will use another base, obtained from previous one by acting with
the automorphism $\Phi\col\UL\rightarrow\UL$ such that
$$
\Phi(E_i)=F_i,\quad \Phi(F_i)=E_i,\quad \Phi(H_i)=-H_i.
$$

\section{Trace of the $2$-category  $\modU^*$}

In this section we prove Theorem \ref{qqq} by computing $\Tr(\dotU^*)\cong\Tr(\U^*)$.

\subsection{Split Grothendieck group of $\dotU^*$}
Recall that the addition of translations in the definition of $\U^*$
identifies every $1$-morphism with all its shifts.  Hence, on the
level of the split Grothendieck group multiplication with $q$ becomes
a trivial operation.  Thus the split Grothendieck group of $K_0(\dotU^*)$
can be obtained from those of $\UcatD$ by setting $q=1$ and it
coincides with the integral idempotented version of $\bfU$.

\subsection{Generators of the trace of $\dotU^*$}

Let us introduce the following notation
$$
\rE^{(a)}_\mu\onen:=[\E^{(a)}_\mu\onen],\quad
\rF^{(a)}_\lambda\onen:=[\F^{(a)}_\lambda\onen],\quad
b^+(\tau):=b^+(s_\tau).
$$
Recall that $\rE^{(a)}_\mu$ is the trace of the $2$-endomorphism
of $\E^{(a)}$ given by the multiplication with the Schur function
indexed by $\mu$.

\begin{prop}(Triangular Decomposition)
\label{triangular}  For $n+m\geq 0$,
$\Tr\dotU^*(n,m)$ is a free $\modZ$-module with basis
\begin{gather*}
  \rF^{(b)}_{\mu}b^+(\tau)\rE^{(a)}_{\lambda}\onen
  \quad \text{for $n\in\Z$, $a,b\ge0$, $2(a-b)=m-n$, $\lambda\in
  P(a)$, $\mu\in P(b)$, $\tau\in P$,}
\end{gather*}
and for $n+m\leq 0$ by
\begin{gather*}
  \rE^{(a)}_{\lambda}b^+(\tau)\rF^{(b)}_{\mu}\onen
  \quad \text{for $n\in\Z$, $a,b\ge0$, $2(a-b)=m-n$, $\lambda\in P(a)$, $\mu\in P(b)$, $\tau\in P$.}
\end{gather*}
\end{prop}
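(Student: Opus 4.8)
The plan is to invoke the general trace‑computation criterion of Proposition~\ref{general}. First I would pass to a full subcategory: since for $n+m\ge0$ every object of $\dotU^*(n,m)$ is a direct sum of copies of the objects $\F^{(b)}\E^{(a)}\onen$ with $2(a-b)=m-n$, Proposition~\ref{trdec} lets us replace $\dotU^*(n,m)$ by its full subcategory $\C$ on those objects — exactly the category presented in Theorem~\ref{r3} — so that it suffices to compute $\Tr(\C)$. (The case $n+m\le0$ follows symmetrically after applying the automorphism $\Phi$, which interchanges the roles of $\E$ and $\F$ and turns the statement into the $\E^{(a)}\F^{(b)}\onen$‑version.) By Lemma~\ref{BAZAstar} specialized to $\delta=0$ (so $i=j$), the group
\[
  H=\bigoplus_{\substack{a,b\ge0\\ 2(a-b)=m-n}}\dotU^*(\F^{(b)}\E^{(a)}\onen,\F^{(b)}\E^{(a)}\onen)
\]
is free over $\Z$ with basis the elements $f^{b,a,i,i}_{\lambda,\mu,\nu,\sigma,\tau}\onen$. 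Let $K\subseteq H$ be the free submodule spanned by the ``triangular'' basis elements, those with $i=0$; unravelling the notation these are precisely $\rF^{(b)}_{\mu}b^+(\tau)\rE^{(a)}_{\lambda}\onen$, so $K$ is exactly the span of the basis claimed in the Proposition.

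Next I would construct the projection $\pi\col H\to K$ required by Proposition~\ref{general}. On $K$ it is the identity. On a basis element $f^{b,a,i,i}_{\lambda,\mu,\nu,\sigma,\tau}\onen$ with $i>0$ the definition comes from the following reduction, valid in $\Tr(\C)$: the top cup (labelled $i$, decorated by $s_\nu$) and the bottom cap (labelled $i$, decorated by $s_\sigma$) may be rotated cyclically around the ``trace circle'' until they fuse into a single closed, Schur‑decorated, thick loop riding on $i$ of the $\E$‑strands and $i$ of the $\F$‑strands; applying the curl relations and the dotted‑bubble relations of the thick calculus of \cite{KLMS} (the thick analogues of \eqref{rei1}--\eqref{rei2} and \eqref{gras}) evaluates this loop, producing a $\Z$‑linear combination of elements of $K$, with the Littlewood--Richardson coefficients $N^{\lambda}_{\mu\nu}$ entering through the comultiplication of Schur functions when $s_\nu$ and $s_\sigma$ are distributed across the strands. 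Declaring $\pi\bigl(f^{b,a,i,i}_{\lambda,\mu,\nu,\sigma,\tau}\onen\bigr)$ to be that combination gives a well‑defined $\Z$‑linear map; it is idempotent because it fixes $K$ pointwise, and by construction $[\pi(f)]=[f]$ in $\Tr(\C)$ for all $f\in H$. Thus hypotheses (1) and (2) of Proposition~\ref{general} hold.

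The genuinely delicate point is hypothesis (3): $\pi(g\circ h)=\pi(h\circ g)$ for composable morphisms $g,h$ of $\C$, and this must hold as an identity in $K$, not merely in $\Tr(\C)$. I expect this to be the main obstacle. The reason it should be true is that the reduction defining $\pi$ is in effect a \emph{partial closure} of the interacting strands, and closing up $g\circ h$ and $h\circ g$ yields the same annular diagram; making this rigorous requires checking that the reduction really factors through a cyclically invariant map and not only through the trace. I would carry this out by a direct computation on the $B^+$‑basis, systematically using the defining relations of $\dotU^*$ — equivalently, the relations \eqref{relnm}--\eqref{relnm5} of Theorem~\ref{r3}, the crucial one being \eqref{relnm2}, which expresses the ``$u$--$t$'' curl in terms of the generators $b_\lambda,d_\lambda,d'_\lambda$. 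Once (3) is verified, Proposition~\ref{general} yields $\Tr(\C)\cong K$; since $K$ is free with the stated basis and $\Tr(\C)\cong\Tr(\dotU^*(n,m))$, this proves the Proposition for $n+m\ge0$, and the case $n+m\le0$ follows from the symmetry noted above.
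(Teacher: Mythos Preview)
Your proposal is essentially the paper's own argument: reduce to the full subcategory on the $\F^{(b)}\E^{(a)}\onen$ via Proposition~\ref{trdec}, apply Proposition~\ref{general} with $H$ and $K$ exactly as you describe, build $\pi$ by cyclically rotating the cup past the cap, and isolate hypothesis~(3) as the nontrivial step.

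Two small points of divergence are worth flagging. First, your description of $\pi$ as a single ``evaluate the closed loop'' step is a bit optimistic: in the paper $\pi$ is obtained by \emph{iterating} a one-step map $p$ which slides the thickness-$i$ cup below the cap, landing in $\End(\F^{(b-i)}\E^{(a-i)}\onen)$; one must then expand the resulting cap-over-cup configuration in the $B^+$-basis (this is where the thick curl relations enter) and reapply $p$ to the new, generally smaller, cup--cap pairs. No closed loop appears until the very end of this inductive process. Second, for hypothesis~(3) the paper does not use the algebraic presentation of Theorem~\ref{r3} (indeed that theorem is explicitly declared unused); instead Lemma~\ref{ap1} verifies $\pi(gh)=\pi(hg)$ by induction on $b$, reducing to three families of pairs $(g,h)$ and checking each by sliding Schur boxes through splitters diagrammatically. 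Your proposed route through \eqref{relnm}--\eqref{relnm5} is plausible in principle but would be a genuinely different and likely more laborious computation, since relation~\eqref{relnm2} introduces the full $\tilde c_k$ sums rather than the clean cyclic rewriting that the diagrammatic $p$ affords.
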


Note that if $n+m=0$ then both these bases can be used.

\begin{proof}
We will prove the proposition for  $m+n\geq 0$. The other case is similar.

After bubble slides (Corollary 4.7 and Proposition 4.8 of \cite{KLMS}),
we have to show that $\Tr\dotU^*(n,m)$ has a basis given by
 $$\rF^{(b)}_{\mu}\rE^{(a)}_{\lambda}b^+(\tau)\onen
\quad {\text {for}} \quad  n\in\Z, \quad a,b\ge0, \quad 2(a-b)=m-n, \quad \lambda\in P(a), \quad\mu\in P(b).$$
Since every object of $\dotU^*(n,m)$ is isomorphic to a direct sum of elements of $_m\B_n$ (Theorem \ref{K0}), by Proposition \ref{trdec} we have
$\Tr\dotU^*(n,m)\simeq\Tr(\dotU^*(n,m)|_{_m\B_n})$.
We will use Proposition \ref{general} for the linear category
$\modC:=\dotU^*(n,m)|_{_m\B_n}$.

Let $H=\bigoplus_{x\in{_m\B_n}}\C(x,x)$. By Proposition \ref{BAZA},
$H$ is a free $\Z$-module generated by $\{f^{b,a,i,i}_{\lambda,\mu,\nu,\sigma,\tau}\onen\;|\;2(a-b)=n-m\}$.
Let a subspace $K\subset H$ be generated by $\{f^{b,a,0,0}_{\lambda,\mu,0,0,\tau}\onen\;|\;2(a-b)=n-m\}$.

Define $p\col H \rightarrow H$ as follows:
$$p\left(f^{b,a,i,i}_{\lambda,\mu,\nu,\sigma,\tau}\onen\right)
=
p\left(\xy
 (-8,14);(-8,-14); **[grey][|(4)]\dir{-} ?(.85)*[grey][|(3)]\dir{<}?(.5)*{\bigb{\mu}};
 (8,-14);(8,14); **[grey][|(4)]\dir{-} ?(.85)*[grey][|(3)]\dir{<}?(.5)*{\bigb{\lambda}};
 (8,10)*{};(-8,10)*{} **[grey][|(4)]\crv{(8,2) & (-8,2)} ?(.25)*[grey][|(3)]\dir{<}?(.5)*{\bigb{\nu}};
 (-8,-10)*{};(8,-10)*{} **[grey][|(4)]\crv{(-8,-2) & (8,-2)} ?(.25)*[grey][|(3)]\dir{<}?(.5)*{\bigb{\sigma}};
 (-10,14)*{\scriptstyle b};
 (10,14)*{\scriptstyle a};
 (-10,-14)*{\scriptstyle b};
 (10,-14)*{\scriptstyle a};
 (5,-7)*{\scriptstyle i};
 (-5,7)*{\scriptstyle i};
 (18,2)*{b^+(s_{\tau})};
 (16,-8)*{n};
 (24,0)*{};
 (-11,0)*{};
\endxy\right)
:=
\xy
 (-8,14);(-8,-14); **[grey][|(4)]\dir{-} ?(.1)*[grey][|(3)]\dir{<}?(.8)*{\bigb{\mu}};
 (8,-14);(8,14); **[grey][|(4)]\dir{-} ?(.85)*[grey][|(3)]\dir{<}?(.2)*{\bigb{\lambda}};
 (8,-4)*{};(-8,-4)*{} **[grey][|(4)]\crv{(8,-12) & (-8,-12)} ?(.25)*[grey][|(3)]\dir{<}?(.5)*{\bigb{\nu}};
 (-8,-1)*{};(8,-1)*{} **[grey][|(4)]\crv{(-8,7) & (8,7)} ?(.25)*[grey][|(3)]\dir{<}?(.5)*{\bigb{\sigma}};
 (-11,14)*{\scriptstyle b-i};
 (11,14)*{\scriptstyle a-i};
 (-11,-14)*{\scriptstyle b-i};
 (11,-14)*{\scriptstyle a-i};
 (5,2)*{\scriptstyle i};
 (-5,-7)*{\scriptstyle i};
 (18,2)*{b^+(s_{\tau})};
 (16,-8)*{n};
 (24,0)*{};
\endxy
$$
It is obvious that $p(f)=f$ for $f\in K$, and that $[p(f)]=[f]$.

Using thick calculus relations to simplify the results of iteratively applying $p$, one can show that for every $f\in H$ there is $k\ge0$ such that $p^k(f)\in K$.
Let $\pi: H \rightarrow K$ be defined by
$\pi(f)=p^k(f)$ where $k$ is chosen as above.
Since it is true for $p$, we have $\pi(f)=f$ for $f\in K$, and  $[\pi(f)]=[f]$. So conditions (1) and (2) of Proposition \ref{general} for the map $\pi$ are satisfied.
Condition (3) is proved in the following Lemma~\ref{ap1}.

Thus, $\Tr\dotU^*=\bigoplus_{n,m}\Tr\dotU^*(n,m)$ is freely generated by $\{[f^{b,a,0,0}_{\lambda,\mu,0,0,\tau}\onen]=\rF^{(b)}_{\lambda}\rE^{(a)}_{\mu}b^+(\tau)\onen\}$
as desired.
\end{proof}

\begin{lem}
\label{ap1}
For every $g\in(\dotU^*(n,m))(s,t)$ and $h\in(\dotU^*(n,m))(t,s)$, for
$s,t\in{_m\B_n}$, the equation $\pi(gh)=\pi(hg)$ holds.
\end{lem}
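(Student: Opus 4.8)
We verify condition~(3) of Proposition~\ref{general} by first reducing the statement to a handful of generating morphisms and then to a diagrammatic computation in the thick calculus.

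\emph{Reduction to generators.} Both sides of $\pi(gh)=\pi(hg)$ are $\modZ$-bilinear in $(g,h)$, and the identity is symmetric under interchanging $g$ and $h$. Since $n+m\ge0$, the objects of $\modC=\dotU^*(n,m)|_{{}_m\B_n}$ are the $\F^{(b)}\E^{(a)}\onen$, and by Theorem~\ref{r3} every morphism of $\modC$ is a $\modZ$-linear combination of words in the generators $t_j^{(b,a)}$, $u_j^{(b,a)}$, $d_\lambda^{(b,a)}$, ${d'}_\lambda^{(b,a)}$ and $b_\lambda^{(b,a)}$. It therefore suffices to prove $\pi(gh)=\pi(hg)$ when $g$ is one of these generators and $h$ is arbitrary: writing a general $g$ as a word of length $\ell$ and arguing by induction on $\ell$, if $g=g_1g_2$ with $g_2$ a generator and $g_1$ shorter, then
\[
\pi(gh)=\pi\bigl(g_1(g_2h)\bigr)=\pi\bigl((g_2h)g_1\bigr)=\pi\bigl(g_2(hg_1)\bigr)=\pi\bigl((hg_1)g_2\bigr)=\pi(hg),
\]
where the second equality is the inductive hypothesis applied to the pair $(g_1,g_2h)$, the fourth is the generator case applied to $(g_2,hg_1)$, and the remaining equalities are associativity. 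Finally, by linearity in $h$ together with Proposition~\ref{BAZA}, we may assume $h$ is a standard basis diagram $f^{b,a,i,j}_{\lambda,\mu,\nu,\sigma,\tau}\onen$.

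\emph{The computation.} If $g=b_\lambda^{(b,a)}$ there is nothing to prove, since by \eqref{relnm5} such a bubble commutes with every morphism of $\modC$, so $gh=hg$. If $g=d_\lambda^{(b,a)}$ or ${d'}_\lambda^{(b,a)}$, one slides the Schur box through the cups, caps and through-strands of $h$ using \eqref{relnm3}--\eqref{relnm4} (equivalently, the Pieri/explosion relations of the thick calculus of \cite{KLMS} recalled in Section~\ref{sec:2-categories-ucatd}) and checks that, after applying $\pi$, the resulting linear combination does not depend on whether the box was attached above or below $h$. The substantive cases are $g=t_j^{(b,a)}$ and $g=u_j^{(b,a)}$: here $gh$ and $hg$ each acquire one extra cap--cup pair adjacent to the turnbacks already present in $h$, and one brings both composites into the distinguished spanning set $\{f^{b,a,0,0}_{\lambda,\mu,0,0,\tau}\onen\}$ of $K$ by iterating the map $p$ and using the thick-calculus relations (bubble slides, curl relations, and the rotation move built into the definition of $p$); one then checks that the bubble coefficients $\tilde{c}_k$ and the Schur-function coefficients produced along the two reductions agree.

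\emph{Conceptual picture and the main point.} Morally, $\pi(f)$ is obtained by closing the $2$-endomorphism diagram $f$ up around the annulus --- joining its top boundary to its bottom boundary --- and re-expanding the resulting annular diagram in the spanning set of $K$; the moves defining $p$ are exactly the planar isotopies of the annular diagram that decrease the number of through-strands. From this vantage point $\pi(gh)=\pi(hg)$ holds because the annular closures of $gh$ and $hg$ are isotopic, the one being obtained from the other by sliding the block $g$ once around the annulus past $h$. The work, and the main obstacle, lies in making this rigorous with the explicit combinatorial definition of $p$: one must confirm that $p$ genuinely realizes such a rotation and that iterating it terminates in a well-defined element of $K$ (so that the exponent $k$ in $\pi=p^k$ is immaterial once $p^k(f)\in K$), and then carry out the $t_j$/$u_j$ bookkeeping above precisely enough to match the coefficients on the two sides.
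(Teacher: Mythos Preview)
Your reduction step is sound and essentially dual to the paper's: you fix $g$ to be a single generator and let $h$ range over basis diagrams, whereas the paper fixes $g$ to be an arbitrary basis diagram $f^{b,a,i,j}_{\lambda,\mu,\nu,\sigma,\tau}\onen$ and reduces $h$ to three simple shapes (an element of $K$; a pure $(i{-}j)$-fold cap with a Schur label; a pure $(j{-}i)$-fold cup with a Schur label). These three $h$'s are precisely composites of your $d,d',b$ and of your $t_j,u_j$, so the two reductions cover the same ground. One remark: invoking Theorem~\ref{r3} here is unnecessary and slightly unfortunate, since the paper only sketches it and explicitly says it is not used elsewhere; Proposition~\ref{BAZA} already gives the needed spanning set.

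The gap is in your ``computation'' paragraph. For $d_\lambda$, $d'_\lambda$ and especially for $t_j,u_j$ you assert that after sliding Schur boxes and iterating $p$ one ``checks that the bubble coefficients $\tilde c_k$ and the Schur-function coefficients \dots\ agree,'' and in the final paragraph you openly concede that making this rigorous is ``the main obstacle.'' That is exactly the content of the lemma, so the annulus heuristic, while correct, does not discharge it.

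What the paper supplies, and what your argument is missing, is an \emph{induction on $b$} (the thickness of the $\F$-strand). The key observation is that one application of $p$ to $hg$ literally rotates the top $i$-fold cap of $g$ around to the bottom, producing an endomorphism of the smaller object $\F^{(b-i)}\E^{(a-i)}\onen$; after sliding the Schur labels of $h$ through the splitters one lands on a diagram which is $g'h'$ for a pair at this smaller object. The induction hypothesis then gives $\pi(g'h')=\pi(h'g')$, and a single application of $p$ to $gh$ yields exactly $h'g'$. No matching of $\tilde c_k$ coefficients is ever required: relation~\eqref{relnm2} does not enter, because $p$ only rotates existing caps and cups and never resolves a $u_it_j$ stack. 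Inserting this induction in place of your ``one checks'' would complete the argument along the lines you outline.
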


\begin{proof}
Let $s=\E^{(a)}\F^{(b)}\onen$ and $t=\E^{(a+i-j)}\F^{(b+i-j)}\onen$. By Proposition~\ref{BAZA} it is enough to prove the statement for generators $g=f^{b,a,i,j}_{\lambda,\mu,\nu,\sigma,\tau}\onen$ and $h=f^{b+i-j,a+i-j,i',j'}_{\lambda',\mu',\nu',\sigma',\tau'}\onen$.  Moreover, it is enough to prove it for:
\begin{enumerate}
\item $g=f^{b,a,i,i}_{\lambda,\mu,\nu,\sigma,\tau}\onen$ and $h=f^{b,a,0,0}_{\lambda',\mu',0,0,\tau'}\onen$,
\item $g=f^{b,a,i,j}_{\lambda,\mu,\nu,\sigma,\tau}\onen$ and $h=f^{b+i-j,a+i-j,i-j,0}_{0,0,\nu',0,0}\onen$ and
\item $g=f^{b,a,i,j}_{\lambda,\mu,\nu,\sigma,\tau}\onen$ and $h=f^{b+i-j,a+i-j,0,j-i}_{0,0,0,\sigma',0}\onen$,
\end{enumerate}
because all generators can be constructed from the above $h$'s by vertical composition.

Schur functions can be slid through splitters using \cite[equation (2.74)]{KLMS}.  We will not need the precise form of these relations.  In the calculations below it suffices to consider the generic formula
of the form
$$\xy
 (0,-8);(0,1) **[grey][|(4)]\dir{-} ?(.5)*{\bigb{\mu}};
 (0,1);(-4,8)*{} **[grey][|(4)]\crv{(-4,2)} ?(.8)*[grey][|(3)]\dir{>};
 (0,1);(4,8)*{} **[grey][|(4)]\crv{(4,2)} ?(.8)*[grey][|(3)]\dir{>};
 (-2,8)*{\scriptstyle a};
 (6,8)*{\scriptstyle b};
 (7,-2)*{n};
 (4,-8)*{\scriptstyle a+b};
 (-8,0)*{}; (8,0)*{};
\endxy
=
\sum_l\xy
 (0,-8);(0,-1) **[grey][|(4)]\dir{-} ?(.6)*[grey][|(3)]\dir{>};
 (0,-1);(-4,8)*{} **[grey][|(4)]\crv{(-4,0)} ?(.7)*{\bigb{\mu_l}};
 (0,-1);(4,8)*{} **[grey][|(4)]\crv{(4,0)} ?(.7)*{\bigb{\bar\mu_l}};
 (-2,8)*{\scriptstyle a};
 (6,8)*{\scriptstyle b};
 (7,-2)*{n};
 (4,-8)*{\scriptstyle a+b};
 (-8,0)*{}; (8,0)*{};
\endxy \; ,
\quad\quad\quad
\xy
 (0,-8);(0,1) **[grey][|(4)]\dir{-} ?(.5)*{\bigb{\mu}};
 (0,1);(-4,8)*{} **[grey][|(4)]\crv{(-4,2)} ?(.6)*[grey][|(3)]\dir{<};
 (0,1);(4,8)*{} **[grey][|(4)]\crv{(4,2)} ?(.6)*[grey][|(3)]\dir{<};
 (-2,8)*{\scriptstyle a};
 (6,8)*{\scriptstyle b};
 (7,-2)*{n};
 (4,-8)*{\scriptstyle a+b};
 (-8,0)*{}; (8,0)*{};
\endxy
=
\sum_l\xy
 (0,-8);(0,-1) **[grey][|(4)]\dir{-} ?(.4)*[grey][|(3)]\dir{<};
 (0,-1);(-4,8)*{} **[grey][|(4)]\crv{(-4,0)} ?(.7)*{\bigb{\mu_l}};
 (0,-1);(4,8)*{} **[grey][|(4)]\crv{(4,0)} ?(.7)*{\bigb{\bar\mu_l}};
 (-2,8)*{\scriptstyle a};
 (6,8)*{\scriptstyle b};
 (7,-2)*{n};
 (4,-8)*{\scriptstyle a+b};
 (-8,0)*{}; (8,0)*{};
\endxy \; .
$$

Without loss of generality let us assume $m\geq n$. Let us prove (1) by induction on $b$. If $b=0$ it is obvious since $\pi(gh)=gh=hg=\pi(hg)$. If $b>0$
$$p(hg)
=
p\left(\xy
 (-8,18);(-8,-18); **[grey][|(4)]\dir{-} ?(.05)*[grey][|(3)]\dir{<}?(.4)*{\bigb{\mu}}?(.8)*{\bigb{\mu'}};
 (8,-18);(8,18); **[grey][|(4)]\dir{-} ?(.05)*[grey][|(3)]\dir{<}?(.6)*{\bigb{\lambda}}?(.2)*{\bigb{\lambda'}};
 (8,6)*{};(-8,6)*{} **[grey][|(4)]\crv{(8,-2) & (-8,-2)} ?(.25)*[grey][|(3)]\dir{<}?(.5)*{\bigb{\nu}};
 (-8,-14)*{};(8,-14)*{} **[grey][|(4)]\crv{(-8,-6) & (8,-6)} ?(.25)*[grey][|(3)]\dir{<}?(.5)*{\bigb{\sigma}};
 (-10,18)*{\scriptstyle b};
 (10,18)*{\scriptstyle a};
 (-10,-18)*{\scriptstyle b};
 (10,-18)*{\scriptstyle a};
 (5,-11)*{\scriptstyle i};
 (-5,3)*{\scriptstyle i};
 (18,-2)*{b^+(s_{\tau})};
 (16,-12)*{n};
 (24,0)*{};
 (-11,0)*{};
 (18,12)*{b^+(s_{\tau'})};
\endxy\right)=
$$
$$
=\sum_{l,l'}p\left(\xy
 (-12,18);(-12,-18); **[grey][|(4)]\dir{-} ?(.05)*[grey][|(3)]\dir{<}?(.35)*{\bigb{\mu}}?(.55)*{\bigb{\mu'_l}};
 (12,-18);(12,18); **[grey][|(4)]\dir{-} ?(.05)*[grey][|(3)]\dir{<}?(.65)*{\bigb{\lambda}}?(.45)*{\bigb{\lambda'_{l'}}};
 (12,12)*{};(-12,12)*{} **[grey][|(4)]\crv{(12,6) & (-12,6)}?(.5)*{\bigb{\nu}}?(.7)*{\bigb{\bar\mu'_l}}?(.3)*{\bigb{\bar\lambda'_{l'}}};
 (-12,-14)*{};(12,-14)*{} **[grey][|(4)]\crv{(-12,-6) & (12,-6)}?(.5)*{\bigb{\sigma}}?(.3)*[grey][|(3)]\dir{<};
 (-14,18)*{\scriptstyle b};
 (14,18)*{\scriptstyle a};
 (-14,-18)*{\scriptstyle b};
 (14,-18)*{\scriptstyle a};
 (7,-11)*{\scriptstyle i};
 (22,-2)*{b^+(s_{\tau})};
 (20,-12)*{n};
 (24,0)*{};
 (-15,0)*{};
 (22,12)*{b^+(s_{\tau'})};
\endxy\right)
=
\sum_{l,l'}\xy
 (-12,18);(-12,-18); **[grey][|(4)]\dir{-} ?(.05)*[grey][|(3)]\dir{<}?(.6)*{\bigb{\mu}}?(.8)*{\bigb{\mu'_l}};
 (12,-18);(12,18); **[grey][|(4)]\dir{-} ?(.05)*[grey][|(3)]\dir{<}?(.4)*{\bigb{\lambda}}?(.2)*{\bigb{\lambda'_{l'}}};
 (12,-7)*{};(-12,-7)*{} **[grey][|(4)]\crv{(12,-13) & (-12,-13)}?(.5)*{\bigb{\nu}}?(.7)*{\bigb{\bar\mu'_l}}?(.3)*{\bigb{\bar\lambda'_{l'}}};
 (-12,-4)*{};(12,-4)*{} **[grey][|(4)]\crv{(-12,4) & (12,4)}?(.5)*{\bigb{\sigma}}?(.3)*[grey][|(3)]\dir{<};
 (-15,18)*{\scriptstyle b-i};
 (15,18)*{\scriptstyle a-i};
 (-15,-18)*{\scriptstyle b-i};
 (15,-18)*{\scriptstyle a-i};
 (7,-1)*{\scriptstyle i};
 (22,-2)*{b^+(s_{\tau})};
 (20,-12)*{n};
 (24,0)*{};
 (-15,0)*{};
 (22,12)*{b^+(s_{\tau'})};
\endxy
$$
By acting with $\pi$ on above relations and using assumption of the induction we get
$$
\pi(hg)=\sum_{l,l'}\pi\left(\xy
 (-12,18);(-12,-18); **[grey][|(4)]\dir{-} ?(.05)*[grey][|(3)]\dir{<}?(.6)*{\bigb{\mu}}?(.8)*{\bigb{\mu'_l}};
 (12,-18);(12,18); **[grey][|(4)]\dir{-} ?(.05)*[grey][|(3)]\dir{<}?(.4)*{\bigb{\lambda}}?(.2)*{\bigb{\lambda'_{l'}}};
 (12,-7)*{};(-12,-7)*{} **[grey][|(4)]\crv{(12,-13) & (-12,-13)}?(.5)*{\bigb{\nu}}?(.7)*{\bigb{\bar\mu'_l}}?(.3)*{\bigb{\bar\lambda'_{l'}}};
 (-12,-4)*{};(12,-4)*{} **[grey][|(4)]\crv{(-12,4) & (12,4)}?(.5)*{\bigb{\sigma}}?(.3)*[grey][|(3)]\dir{<};
 (-15,18)*{\scriptstyle b-i};
 (15,18)*{\scriptstyle a-i};
 (-15,-18)*{\scriptstyle b-i};
 (15,-18)*{\scriptstyle a-i};
 (7,-1)*{\scriptstyle i};
 (22,-2)*{b^+(s_{\tau})};
 (20,-12)*{n};
 (24,0)*{};
 (-15,0)*{};
 (22,12)*{b^+(s_{\tau'})};
\endxy\right)=
$$
$$
=\sum_{l,l'}\pi\left(\xy
 (-12,18);(-12,-18); **[grey][|(4)]\dir{-} ?(.05)*[grey][|(3)]\dir{<}?(.8)*{\bigb{\mu}}?(.2)*{\bigb{\mu'_l}};
 (12,-18);(12,18); **[grey][|(4)]\dir{-} ?(.05)*[grey][|(3)]\dir{<}?(.2)*{\bigb{\lambda}}?(.8)*{\bigb{\lambda'_{l'}}};
 (12,-1)*{};(-12,-1)*{} **[grey][|(4)]\crv{(12,-7) & (-12,-7)}?(.5)*{\bigb{\nu}}?(.7)*{\bigb{\bar\mu'_l}}?(.3)*{\bigb{\bar\lambda'_{l'}}};
 (-12,2)*{};(12,2)*{} **[grey][|(4)]\crv{(-12,8) & (12,8)}?(.5)*{\bigb{\sigma}}?(.3)*[grey][|(3)]\dir{<};
 (-15,18)*{\scriptstyle b-i};
 (15,18)*{\scriptstyle a-i};
 (-15,-18)*{\scriptstyle b-i};
 (15,-18)*{\scriptstyle a-i};
 (7,4)*{\scriptstyle i};
 (22,-2)*{b^+(s_{\tau})};
 (20,-12)*{n};
 (24,0)*{};
 (-15,0)*{};
 (22,12)*{b^+(s_{\tau'})};
\endxy\right)
=
\pi\left(\xy
 (-8,18);(-8,-18); **[grey][|(4)]\dir{-} ?(.05)*[grey][|(3)]\dir{<}?(.8)*{\bigb{\mu}}?(.4)*{\bigb{\mu'}};
 (8,-18);(8,18); **[grey][|(4)]\dir{-} ?(.05)*[grey][|(3)]\dir{<}?(.2)*{\bigb{\lambda}}?(.6)*{\bigb{\lambda'}};
 (8,-8)*{};(-8,-8)*{} **[grey][|(4)]\crv{(8,-14) & (-8,-14)} ?(.25)*[grey][|(3)]\dir{<}?(.5)*{\bigb{\nu}};
 (-8,1)*{};(8,1)*{} **[grey][|(4)]\crv{(-8,7) & (8,7)} ?(.25)*[grey][|(3)]\dir{<}?(.5)*{\bigb{\sigma}};
 (-11,18)*{\scriptstyle b-i};
 (11,18)*{\scriptstyle a-i};
 (-11,-18)*{\scriptstyle b-i};
 (11,-18)*{\scriptstyle a-i};
 (5,3)*{\scriptstyle i};
 (-5,-10)*{\scriptstyle i};
 (18,-2)*{b^+(s_{\tau})};
 (16,-12)*{n};
 (24,0)*{};
 (-11,0)*{};
 (18,12)*{b^+(s_{\tau'})};
\endxy\right)
$$
On the other side
$$
p(gh)
=
p\left(\xy
 (-8,18);(-8,-18); **[grey][|(4)]\dir{-} ?(.05)*[grey][|(3)]\dir{<}?(.6)*{\bigb{\mu}}?(.2)*{\bigb{\mu'}};
 (8,-18);(8,18); **[grey][|(4)]\dir{-} ?(.05)*[grey][|(3)]\dir{<}?(.4)*{\bigb{\lambda}}?(.8)*{\bigb{\lambda'}};
 (8,14)*{};(-8,14)*{} **[grey][|(4)]\crv{(8,8) & (-8,8)} ?(.25)*[grey][|(3)]\dir{<}?(.5)*{\bigb{\nu}};
 (-8,-6)*{};(8,-6)*{} **[grey][|(4)]\crv{(-8,0) & (8,0)} ?(.25)*[grey][|(3)]\dir{<}?(.5)*{\bigb{\sigma}};
 (-11,18)*{\scriptstyle b-i};
 (11,18)*{\scriptstyle a-i};
 (-11,-18)*{\scriptstyle b-i};
 (11,-18)*{\scriptstyle a-i};
 (5,-6)*{\scriptstyle i};
 (-5,12)*{\scriptstyle i};
 (18,-2)*{b^+(s_{\tau})};
 (16,-12)*{n};
 (24,0)*{};
 (-11,0)*{};
 (18,12)*{b^+(s_{\tau'})};
\endxy\right)
=
\xy
 (-8,18);(-8,-18); **[grey][|(4)]\dir{-} ?(.05)*[grey][|(3)]\dir{<}?(.8)*{\bigb{\mu}}?(.4)*{\bigb{\mu'}};
 (8,-18);(8,18); **[grey][|(4)]\dir{-} ?(.05)*[grey][|(3)]\dir{<}?(.2)*{\bigb{\lambda}}?(.6)*{\bigb{\lambda'}};
 (8,-8)*{};(-8,-8)*{} **[grey][|(4)]\crv{(8,-14) & (-8,-14)} ?(.25)*[grey][|(3)]\dir{<}?(.5)*{\bigb{\nu}};
 (-8,1)*{};(8,1)*{} **[grey][|(4)]\crv{(-8,7) & (8,7)} ?(.25)*[grey][|(3)]\dir{<}?(.5)*{\bigb{\sigma}};
 (-11,18)*{\scriptstyle b-i};
 (11,18)*{\scriptstyle a-i};
 (-11,-18)*{\scriptstyle b-i};
 (11,-18)*{\scriptstyle a-i};
 (5,3)*{\scriptstyle i};
 (-5,-10)*{\scriptstyle i};
 (18,-2)*{b^+(s_{\tau})};
 (16,-12)*{n};
 (24,0)*{};
 (-11,0)*{};
 (18,12)*{b^+(s_{\tau'})};
\endxy
$$
and by acting with $\pi$ we get $\pi(hg)=\pi(gh)$.

Let us prove (2) by induction on $b$. If $b=0$ then $h=\onen$, so it is obvious. Likewise, if $i-j=0$ then $h=\onen$. So let us assume $b>0$ and $i-j>0$.
$$
p(hg)
=
p\left(\xy
 (-8,18);(-8,-18); **[grey][|(4)]\dir{-} ?(.05)*[grey][|(3)]\dir{<}?(.4)*{\bigb{\mu}};
 (8,-18);(8,18); **[grey][|(4)]\dir{-} ?(.05)*[grey][|(3)]\dir{<}?(.6)*{\bigb{\lambda}};
  (8,15)*{};(-8,15)*{} **[grey][|(4)]\crv{(8,7) & (-8,7)} ?(.25)*[grey][|(3)]\dir{<}?(.5)*{\bigb{\nu'}};
 (8,6)*{};(-8,6)*{} **[grey][|(4)]\crv{(8,-2) & (-8,-2)} ?(.25)*[grey][|(3)]\dir{<}?(.5)*{\bigb{\nu}};
 (-8,-14)*{};(8,-14)*{} **[grey][|(4)]\crv{(-8,-6) & (8,-6)} ?(.25)*[grey][|(3)]\dir{<}?(.5)*{\bigb{\sigma}};
 (-10,18)*{\scriptstyle b};
 (10,18)*{\scriptstyle a};
 (-10,-18)*{\scriptstyle b};
 (10,-18)*{\scriptstyle a};
 (5,-11)*{\scriptstyle i};
 (-5,3)*{\scriptstyle j};
 (-4,12)*{\scriptstyle i-j};
 (18,-2)*{b^+(s_{\tau})};
 (16,-12)*{n};
 (24,0)*{};
 (-11,0)*{};
\endxy\right)
=
\xy
 (-8,18);(-8,-18); **[grey][|(4)]\dir{-} ?(.05)*[grey][|(3)]\dir{<}?(.8)*{\bigb{\mu}};
 (8,-18);(8,18); **[grey][|(4)]\dir{-} ?(.05)*[grey][|(3)]\dir{<}?(.2)*{\bigb{\lambda}};
 (8,0)*{};(-8,0)*{} **[grey][|(4)]\crv{(8,-8) & (-8,-8)} ?(.25)*[grey][|(3)]\dir{<}?(.5)*{\bigb{\nu'}};
 (8,-8)*{};(-8,-8)*{} **[grey][|(4)]\crv{(8,-16) & (-8,-16)} ?(.25)*[grey][|(3)]\dir{<}?(.5)*{\bigb{\nu}};
 (-8,1)*{};(8,1)*{} **[grey][|(4)]\crv{(-8,9) & (8,9)} ?(.25)*[grey][|(3)]\dir{<}?(.5)*{\bigb{\sigma}};
  (-11,18)*{\scriptstyle b-i};
 (11,18)*{\scriptstyle a-i};
 (-11,-18)*{\scriptstyle b-i};
 (5,4)*{\scriptstyle i};
 (-5,-11)*{\scriptstyle j};
 (-4,-3)*{\scriptstyle i-j};
 (18,-2)*{b^+(s_{\tau})};
 (16,-12)*{n};
 (24,0)*{};
 (-11,0)*{};
\endxy
$$
By acting with $\pi$ on above relations and using assumption of the induction we get
$$
\pi(hg)
=
\pi\left(\xy
 (-8,18);(-8,-18); **[grey][|(4)]\dir{-} ?(.05)*[grey][|(3)]\dir{<}?(.8)*{\bigb{\mu}};
 (8,-18);(8,18); **[grey][|(4)]\dir{-} ?(.05)*[grey][|(3)]\dir{<}?(.2)*{\bigb{\lambda}};
 (8,0)*{};(-8,0)*{} **[grey][|(4)]\crv{(8,-8) & (-8,-8)} ?(.25)*[grey][|(3)]\dir{<}?(.5)*{\bigb{\nu'}};
 (8,-8)*{};(-8,-8)*{} **[grey][|(4)]\crv{(8,-16) & (-8,-16)} ?(.25)*[grey][|(3)]\dir{<}?(.5)*{\bigb{\nu}};
 (-8,1)*{};(8,1)*{} **[grey][|(4)]\crv{(-8,9) & (8,9)} ?(.25)*[grey][|(3)]\dir{<}?(.5)*{\bigb{\sigma}};
  (-11,18)*{\scriptstyle b-i};
 (11,18)*{\scriptstyle a-i};
 (-11,-18)*{\scriptstyle b-i};
 (5,4)*{\scriptstyle i};
 (-5,-11)*{\scriptstyle j};
 (-4,-3)*{\scriptstyle i-j};
 (18,-2)*{b^+(s_{\tau})};
 (16,-12)*{n};
 (24,0)*{};
 (-11,0)*{};
\endxy\right)
=
\pi\left(\xy
 (-8,18);(-8,-18); **[grey][|(4)]\dir{-} ?(.05)*[grey][|(3)]\dir{<}?(.6)*{\bigb{\mu}};
 (8,-18);(8,18); **[grey][|(4)]\dir{-} ?(.05)*[grey][|(3)]\dir{<}?(.4)*{\bigb{\lambda}};
 (8,-7)*{};(-8,-7)*{} **[grey][|(4)]\crv{(8,-15) & (-8,-15)} ?(.25)*[grey][|(3)]\dir{<}?(.5)*{\bigb{\nu'}};
 (8,14)*{};(-8,14)*{} **[grey][|(4)]\crv{(8,6) & (-8,6)} ?(.25)*[grey][|(3)]\dir{<}?(.5)*{\bigb{\nu}};
 (-8,-6)*{};(8,-6)*{} **[grey][|(4)]\crv{(-8,2) & (8,2)} ?(.25)*[grey][|(3)]\dir{<}?(.5)*{\bigb{\sigma}};
 (-13,18)*{\scriptstyle b-i+j};
 (13,18)*{\scriptstyle a-i+j};
 (-13,-18)*{\scriptstyle b-i+j};
 (13,-18)*{\scriptstyle a-i+j};
 (5,-3)*{\scriptstyle i};
 (-5,11)*{\scriptstyle j};
 (-4,-10)*{\scriptstyle i-j};
 (18,-2)*{b^+(s_{\tau})};
 (16,-12)*{n};
 (24,0)*{};
 (-11,0)*{};
\endxy\right)
=
\pi(gh).
$$

Case (3) is similar.
\end{proof}

The degree of $\rF^{(b)}_\lambda b^+(\tau) \rE^{(a)}_\mu  \onen$ is $2(|\lambda |+|\mu |+|\tau |)\geq 0$. So we have the following.

\begin{cor}
  \label{r4}
The negative degree part of $\Tr\U ^*$ is zero.
For any positive integer $i$, the $2i$-degree part of $\Tr\dotU^*(n,m)$ is freely generated by $\rF^{(a)}_\lambda b^+(\tau) \rE^{(b)}_\mu \onen$ for $n+m\geq 0$ and by
$\rE^{(b)}_\mu b^+(\tau) \rF^{(a)}_\lambda \onen$ for $n+m\leq 0$. In both cases $2(a-b)=m-n$ and $|\lambda |+|\mu |+|\tau |=i$.
In particular, the degree zero part of $\Tr\dotU^*$ coincides with $K_0(\dotU^*)$.
\end{cor}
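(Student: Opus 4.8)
The plan is to derive the corollary directly from the Triangular Decomposition (Proposition~\ref{triangular}) by a single grading argument. First I would record that $\Tr(\U^*)\cong\Tr(\dotU^*)$ by Proposition~\ref{r1} applied to each hom-category, that this isomorphism respects the $\Z$-grading because it is induced by the inclusion $\U^*\hookrightarrow\Kar(\U^*)=\dotU^*$, and that it therefore suffices to work with $\Tr(\dotU^*)$.

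The central observation is that the $\Z$-basis of $\Tr\dotU^*(n,m)$ given by Proposition~\ref{triangular} is \emph{homogeneous}. Indeed $\rE^{(a)}_\lambda\onen=[\E^{(a)}_\lambda\onen]$ has degree $2|\lambda|$ because $\E^{(a)}_\lambda\onen\colon\E^{(a)}\onen\to\E^{(a)}\onen\la 2|\lambda|\ra$ by~\eqref{Elam}; likewise $\rF^{(b)}_\mu\onen$ has degree $2|\mu|$; and $b^+(\tau)=b^+(s_\tau)$ has degree $2|\tau|$, since $b^+$ sends $h_i$ to a dotted bubble of degree $2i$ (by~\eqref{defbh} and the bubble degree convention) and hence doubles the $\Sym$-grading. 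So each basis element is homogeneous of degree $2(|\lambda|+|\mu|+|\tau|)$, which is exactly the formula recorded just before the corollary. A free $\Z$-module with a homogeneous basis is the direct sum of its graded pieces, each free on the basis elements of the corresponding degree; since all these degrees are non-negative and even, the negative-degree part of $\Tr\dotU^*(n,m)$---and hence of $\Tr\U^*$---vanishes, and for $i>0$ the degree-$2i$ part is freely generated by the basis elements with $|\lambda|+|\mu|+|\tau|=i$. The $n+m\le0$ case is handled identically using the second basis of Proposition~\ref{triangular}.

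For the degree-zero assertion I would set $i=0$, which forces $\lambda=\mu=\tau=\varnothing$, so that $\rE^{(a)}_\varnothing\onen$, $b^+(\varnothing)$ and $\rF^{(b)}_\varnothing\onen$ are all classes of identity $2$-morphisms and (for $n+m\ge0$) the degree-zero basis of $\Tr\dotU^*(n,m)$ is $\{[1_{\F^{(b)}\E^{(a)}\onen}]=h_{\dotU^*}([\F^{(b)}\E^{(a)}\onen]_{\cong})\mid 2(a-b)=m-n\}$. On the other hand, Theorem~\ref{K0} together with the collapse $q=1$ defining $\dotU^*$ shows that $K_0(\dotU^*(n,m))$ is free with $\Z$-basis $\{[y]_{\cong}\mid y\in{}_m\mathcal{B}_n\}$ (every object of $\dotU^*(n,m)$ being a finite direct sum, unique up to isomorphism, of the pairwise non-isomorphic elements of ${}_m\mathcal{B}_n$), and for $n+m\ge0$ one has ${}_m\mathcal{B}_n=\{\F^{(b)}\E^{(a)}\onen\mid 2(a-b)=m-n\}$, because the canonical-basis elements of weight $(n,m)$ are precisely the $F^{(b)}E^{(a)}1_n$ with $2(a-b)=m-n$. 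Hence the classes $[\F^{(b)}\E^{(a)}\onen]_{\cong}$ form a $\Z$-basis of $K_0(\dotU^*(n,m))$, and $h_{\dotU^*}$ restricts to an isomorphism onto the degree-zero part of $\Tr\dotU^*(n,m)$; the $n+m\le0$ case follows by applying the automorphism $\Phi$, and summing over all $n,m$ gives that the degree-zero part of $\Tr\dotU^*$ coincides with $K_0(\dotU^*)$.

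The only genuinely non-formal inputs are the degree bookkeeping---above all the fact that $b^+$ doubles the $\Sym$-grading---and, for the final assertion, the fact that the classes $[\F^{(b)}\E^{(a)}\onen]_{\cong}$ form a $\Z$-basis (not merely a spanning set) of $K_0(\dotU^*(n,m))$, which is where the canonical-basis description of $\UA$ enters. Everything else is a formal consequence of Proposition~\ref{triangular}.
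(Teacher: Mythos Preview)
Your proposal is correct and follows essentially the same approach as the paper: the paper's entire argument is the single sentence preceding the corollary, namely that the degree of each basis element $\rF^{(b)}_\lambda b^+(\tau)\rE^{(a)}_\mu\onen$ from Proposition~\ref{triangular} is $2(|\lambda|+|\mu|+|\tau|)\ge0$, from which everything follows. You have simply fleshed out this observation, in particular spelling out the degree-zero identification with $K_0(\dotU^*)$ via $h_{\dotU^*}$ and Theorem~\ref{K0}, which the paper leaves implicit.
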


\subsection{Wedge product of symmetric polynomials}
\label{sec:schur-polyn-non}

As a preparation for the proof of Proposition \ref{zzz} below, we need the
following extension of the Schur polynomials to non-partition
sequences.

For $a\ge0$, set
\begin{gather*}
  \tP(a)=\{ (\tm_1,\dots,\tm_a)\in\Z^a\;|\;\tm_j\ge j-a\}.
\end{gather*}
Note that $P(a)\subset \tP(a)$.  As mentioned in Section
\ref{subsec_sympoly}, the definition \eqref{def_schur} of the Schur
polynomial extends to sequences in $\tP(a)$ as follows.  For
$\tm=(\tm_1,\dots,\tm_a)\in \tP(a)$, set
\begin{equation}\label{e6}
  s_{\tm}:=\frac{a_{\tm+\delta}}{a_{\delta}}=
\frac{\det(x_i^{\tm_j+a-j})_{1\leq i,j\leq a}}{\det(x_i^{a-j})_{1\leq i,j\leq a}}.
\end{equation}

For $k=1,\dots,a-1$, we have
\begin{gather}
  \label{e7}
  s_{(\tm_1,\dots,\tm_a)}=
    -s_{(\tm_1,\dots,\tm_{k-1},\tm_{k+1}-1,\tm_{k}+1,\tm_{k+2},\dots,\tm_a)}.
\end{gather}
Note that the sequence
$(\tm_1,\dots,\tm_{k-1},\tm_{k+1}-1,\tm_{k}+1,\tm_{k+2},\dots,\tm_a)$
of the right hand side is obtained from $\tm$ by permuting the $k$th
and $k+1$st entries and adding $\pm1$ to them.  Using \eqref{e7}, it
is easily checked that for every $\tm\in P(a)$ we have either
$s_{\tm}=0$ or $s_{\tm}=\pm s_{\mu}$ for some uniquely determined
$\mu\in P(a)$.  Consequently, $s_{\tm}\in \Sym_a$.

For $\tl\in \Z^a$, $\tm\in \Z^b$, set
\begin{gather*}
  \tl\cup\tm=(\tl_1,\dots,\tl_a,\tm_1,\dots,\tm_b) \in \Z^{a+b}.
\end{gather*}
Note that we have $\tl\cup\tm\in P(a+b)$ if and  only if $\tl\in
P(a)$, $\tm\in P(b)$ and $\tl_a\ge\tm_1$.

Define the {\em wedge product} of symmetric polynomials
\begin{gather*}
  \wedge_{a,b}\col \Sym_a\times \Sym_b \rightarrow \Sym_{a+b}
\end{gather*}
by
\begin{gather*}
  \wedge_{a,b}(s_\lambda,s_\mu)= s_{(\lambda-b)\cup\mu}
\end{gather*}
for $\lambda\in P(a)$, $\mu\in P(b)$, where $\lambda-b=(\lambda_1-b,\dots,\lambda_a-b)\in\tP(a)$.  Since
$(\lambda-b)\cup\mu\in\tP(a+b)$, it follows that
$s_{\lambda\cup\mu}\in \Sym_{a+b}$ is well defined.
The maps $\wedge_{a,b}$, $a,b\ge0$, form an algebra structure on
the graded $\Z$-module $\bigoplus_{a\ge0}\Sym_a$, which is isomorphic to the exterior
algebra $\bigwedge \Sym_1\cong\bigwedge\Z[h_1]$.

We have (\cite[Proposition 2.9]{KLMS})
\begin{gather*}
  \xy
 (0,11);(0,-11); **[grey][|(4)]\dir{-} ?(1)*[grey][|(3)]\dir{>};
 (5,6)*{n};
 (3,-10)*{\scriptstyle {a+b}};
 (11,0)*{};
 (-3,0)*{};
(0,0)*{\bigb{\wedge_{a,b}(x,y)}};
\endxy
=
\text{$\xy
 (0,6);(0,11) **[grey][|(4)]\dir{-}?(1)*[grey][|(3)]\dir{>};
  (0,-6);(0,-11) **[grey][|(4)]\dir{-};
   (0,-6);(0,6)*{} **[grey][|(4)]\crv{(-5,-5) & (-5,5)}?(.5)*{{\bigb{x}}};
   (0,-6);(0,6)*{} **[grey][|(4)]\crv{(5,-5) & (5,5)}?(.5)*{\bigb{y}};
  (6,7)*{n};
  (3,-11)*{\scriptstyle a+b};
  (-3,-6)*{\scriptstyle a};
  (3,-6)*{\scriptstyle b};
  (10,0)*{};
  (-8,0)*{};
\endxy$}
\end{gather*}
for $x\in \Sym_a,y\in \Sym_b$.

\subsection{Structure of the category $(\mathrm{Tr}\,\dotU^*)^+$}
\label{sec:struct-categ-mathrmt}

Let $\Tr(\dotU^*)^+$ denote the linear subcategory of $\Tr(\dotU^*)$ such
that $\Ob(\Tr(\dotU^*)^+)=\Z$ and the morphisms are generated by composites of
$\rE^{(a)}_\lambda\onen$
for $n\in\Z$, $a\ge0$, $\lambda\in P(a)$.
Similarly, let $\Tr(\dotU^*)^-$ denote the linear subcategory of $\Tr(\dotU^*)$ such
that $\Ob(\Tr(\dotU^*)^-)=\Z$ and the morphisms are generated by
$\rF^{(a)}_\lambda\onen$
for $n\in\Z$, $a\ge0$, $\lambda\in P(a)$.

We define $\Tr(\dotU^*)^0$ as the linear subcategory of $\Tr(\dotU^*)$
with $\Ob(\Tr(\dotU^*)^0)=\Z$ and the morphisms are generated by
$b^+(\tau)\onen$, $n\in\modZ$, $\tau\in P$.  It is easy to check that
$\Tr(\dotU^*)^0(n,m)=0$ for $n\neq m$, and that $\Tr(\dotU^*)^0(n,n)$ has basis given by $b^+(\tau)\onen$ for $\tau\in P$.

For $x\in \Sym_a$, set
\begin{gather*}
  \rE^{(a)}_x\onen=\rE^{(a)}(x)\onen:=[\E^{(a)}_{x}\onen].
\end{gather*}
The following lemma gives the composition rule in $\Tr((\dotU^*)^+)$.
\begin{lem} \label{lem-compUp}
  \label{a1}
  For $x\in \Sym_a$, $y\in \Sym_b$, we have
  \begin{gather*}
    \rE^{(a)}_x \rE^{(b)}_y \onen
    =\sum_{\tau\in P(a,b)}
    (-1)^{|\hat\tau|}
    \rE^{(a+b)}\left(\wedge_{a,b} (xs_{\tau}\otimes s_{\hat\tau} y)\right)\onen.
  \end{gather*}
\end{lem}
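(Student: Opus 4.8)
The statement concerns the product of two ``Schur-decorated'' thick $\E$-edges in the trace category $\Tr((\dotU^*)^+)$. The key feature of the trace (as opposed to $K_0$) is that the two strands of $\E^{(a)}\E^{(b)}\onen$ can be ``closed up'' around the annulus, so the composite $\rE^{(a)}_x\rE^{(b)}_y\onen$ does not simply equal the thick-calculus product $[\E^{(a)}_x\E^{(b)}_y\onen]$; instead the cyclicity of the trace lets us rotate a splitter--merge pair all the way around. The plan is to compute $\E^{(a)}_x\E^{(b)}_y\onen$ inside $\dotU^*$ by inserting the identity $\E^{(a)}\E^{(b)}\onen\cong\bigoplus\E^{(a+b)}\onen$ coming from the KLMS decomposition (\cite[Theorem 5.1]{KLMS}) — concretely, writing $1_{\E^{(a)}\E^{(b)}\onen}$ as a sum over $\tau\in P(a,b)$ of (merge)$\circ$(decorated splitter), with the Schur function $s_\tau$ on one branch and $s_{\hat\tau}$ on the other and a sign $(-1)^{|\hat\tau|}$, this being precisely the ``thick splitter'' relation recalled just before the lemma together with the wedge-product formula of Section \ref{sec:schur-polyn-non}.

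First I would recall the explicit form of the thick $2$-morphism decomposition: for any $2$-endomorphism of $\E^{(a)}\E^{(b)}\onen$, resolving through $\E^{(a+b)}\onen$ expresses it via splitters $\E^{(a+b)}\onen\to\E^{(a)}\E^{(b)}\onen$ and merges $\E^{(a)}\E^{(b)}\onen\to\E^{(a+b)}\onen$, and the relevant ``explosion'' identity (KLMS, Prop.\ 2.9, quoted above for $\wedge_{a,b}$) says that a Schur-decorated thick edge of thickness $a+b$ equals the merge of two Schur-decorated edges of thicknesses $a$ and $b$ with label $\wedge_{a,b}(x\otimes y)$. Next, I would take $[\,-\,]$ of the composite $\E^{(a)}_x\circ\E^{(b)}_y\onen$ inside $\Tr$: using $[fg]=[gf]$, I rotate the bottom merge around the annulus to sit directly below the splitter at the top, producing a ``bubble'' consisting of a splitter stacked on a merge, i.e.\ a closed $\E^{(a+b)}$-loop with decorations, which by the KLMS splitter-merge (``bigon'') relations collapses to $\sum_{\tau\in P(a,b)}(-1)^{|\hat\tau|}$ times a single decorated thick $\E^{(a+b)}$-edge carrying the Schur polynomial $s_\tau$ above the $x$-box on one side and $s_{\hat\tau}$ above the $y$-box on the other. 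Reading this back through the wedge product identity gives exactly $\rE^{(a+b)}(\wedge_{a,b}(xs_\tau\otimes s_{\hat\tau}y))\onen$, summed with the stated sign over $P(a,b)$.

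The main obstacle I expect is bookkeeping the precise combinatorics: (i) identifying the correct index set $P(a,b)$ and the partner partition $\hat\tau=(\tau^c)^t$ together with the sign $(-1)^{|\hat\tau|}$ as it emerges from the splitter--merge resolution — this is where one must invoke the exact form of \cite[Thm.~5.1, eq.~(2.74)]{KLMS} rather than wave hands; and (ii) checking that the ``rotation around the annulus'' is legitimate, i.e.\ that the $2$-morphism being cycled is genuinely of the form $fg$ with $g$ the merge and $f$ everything else, so that $[fg]=[gf]$ applies and no degree-shift subtlety enters (here the passage from $\dotU$ to $\dotU^*$, which kills the grading shifts $\la t\ra$, is exactly what makes the identity clean, since otherwise the Schur decorations would produce incompatible shifts). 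A secondary check is that $xs_\tau$ and $s_{\hat\tau}y$ lie in $\Sym_a$ and $\Sym_b$ respectively so that $\wedge_{a,b}$ is defined — this is immediate since multiplication preserves the polynomial rings. Once these are in place, the computation is a finite diagram manipulation and the lemma follows; I would present it as a short sequence of labelled diagrams with the KLMS relation numbers cited at each step.
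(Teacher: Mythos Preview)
Your plan is correct and matches the paper's proof essentially step for step: insert the KLMS identity decomposition of $1_{\E^{(a)}\E^{(b)}\onen}$ as a sum over $\tau\in P(a,b)$ of a splitter--merge pair with decorations $s_\tau$, $s_{\hat\tau}$ and sign $(-1)^{|\hat\tau|}$, use the trace relation $[fg]=[gf]$ to cycle the merge (together with its $\hat\tau$ decoration) around so that each summand becomes an endomorphism of $\E^{(a+b)}\onen$ given by a split--merge bubble, and then collapse that bubble via the wedge formula of Section~\ref{sec:schur-polyn-non}. Two small corrections in your write-up: the identity on $\E^{(a)}\E^{(b)}\onen$ is (splitter)$\circ$(merge), not the other way round, and the sum over $\tau$ enters at the identity-decomposition step, not at the bubble-collapse step (the wedge formula evaluates each summand to a single decorated thick edge without introducing a further sum).
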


\begin{proof}
We have
\begin{gather*}
\rE^{(a)}_{x}\rE^{(b)}_{y}\onen
=
\left[\xy
 (8,8);(8,-8); **[grey][|(4)]\dir{-} ?(1)*[grey][|(3)]\dir{>}?(.5)*{\bigb{y}};
 (10,-7)*{\scriptstyle b};
 (0,8);(0,-8); **[grey][|(4)]\dir{-} ?(1)*[grey][|(3)]\dir{>}?(.5)*{\bigb{x}};
 (2,-7)*{\scriptstyle a};
 (12,4)*{n};
 (14,0)*{};
 (-3,0)*{};
\endxy\right]
=\sum_{\tau\in P(a,b)} (-1)^{|\hat\tau|}
\left[\xy
 (0,-5);(0,-1) **[grey][|(4)]\dir{-};
 (0,-1);(-6,15)*{} **[grey][|(4)]\crv{(-6,0)} ?(1)*[grey][|(3)]\dir{>}?(.83)*{\bigb{x}}?(.55)*{\bigb{\tau}};
 (0,-1);(6,15)*{} **[grey][|(4)]\crv{(6,0)} ?(1)*[grey][|(3)]\dir{>}?(.7)*{\bigb{y}};
 (0,-5);(6,-15)*{} **[grey][|(4)]\crv{(6,-6)} ?(.65)*{\bigb{\hat\tau}};
 (0,-5);(-6,-15)*{} **[grey][|(4)]\crv{(-6,-6)};
 (8,0)*{n};
 (-4,-15)*{\scriptstyle a};
 (8,-15)*{\scriptstyle b};
 (11,0)*{};
 (-11,0)*{};
\endxy\right]
\end{gather*}
\begin{gather*}
=\sum_{\tau\in P(a,b)} (-1)^{|\hat\tau|}
\left[\text{$\xy
 (0,-10);(0,10)*{} **[grey][|(4)]\crv{(-7,-9) & (-7,9)} ?(.3)*{\bigb{\tau}}?(.5)*{\bigb{x}};
 (0,10);(0,15) **[grey][|(4)]\dir{-}?(1)*[grey][|(3)]\dir{>};
 (0,-10);(0,-15) **[grey][|(4)]\dir{-};
 (0,-10);(0,10)*{} **[grey][|(4)]\crv{(7,-9)  & (7,9) }?(.5)*{\bigb{y}}?(.7)*{\bigb{\hat\tau}};
 (6,12)*{n};
 (3,-15)*{\scriptstyle a+b};
 (-3,-10)*{\scriptstyle a};
 (3,-10)*{\scriptstyle b};
 (11,0)*{};
 (-11,0)*{};
\endxy$}\right]
=\sum_{\tau\in P(a,b)}
    (-1)^{|\hat\tau|}
    \rE^{(a+b)}\left(\wedge_{a,b} (xs_{\tau}\otimes s_{\hat\tau} y)\right)\onen.
\end{gather*}
\end{proof}

Proposition \ref{triangular} and Lemma~\ref{lem-compUp} imply the following.

\begin{prop}
  \label{aaaa}
Let $n,m\in\Z$, $a\ge0$, and $m-n=2a$. Then $\Tr(\dotU^*)^+(n,m)$ is a free abelian group with basis
\begin{gather*}
\{\rE^{(a)}_\lambda\onen\;|\;\lambda\in
  P(a)\}.
  \end{gather*}
\end{prop}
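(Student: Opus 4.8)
The plan is to establish the two defining properties of a basis separately: that the elements $\rE^{(a)}_\lambda\onen$, $\lambda\in P(a)$, span $\Tr(\dotU^*)^+(n,m)$, and that they are $\Z$-linearly independent. Throughout, $2a=m-n$.

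For the spanning statement I would argue directly from the definition of $\Tr(\dotU^*)^+$: the hom-group from $n$ to $m$ is the $\Z$-span of all iterated composites $\rE^{(a_k)}_{\lambda_k}\circ\cdots\circ\rE^{(a_1)}_{\lambda_1}\onen$ of the generating morphisms (the empty composite being the identity $\rE^{(0)}_\emptyset\onen$). Since $\rE^{(c)}_\nu\onen$ raises the weight by $2c$, such a composite is a morphism $n\to m$ only when $a_1+\cdots+a_k=a$. I would then fold adjacent factors together, two at a time, using the composition rule of Lemma~\ref{lem-compUp}: because the assignment $\Sym_a\ni x\mapsto\rE^{(a)}_x\onen$ is $\Z$-linear and the right-hand side of that lemma is a $\Z$-combination of terms $\rE^{(a'+a'')}(z)\onen$ with $z\in\Sym_{a'+a''}$, an induction on $k$ rewrites any such composite as a $\Z$-combination of elements $\rE^{(a)}_x\onen$ with $x\in\Sym_a$. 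Expanding $x$ in the Schur basis $\{s_\lambda\mid\lambda\in P(a)\}$ of $\Sym_a$ finishes this half.

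For linear independence I would transport the question to the larger group $\Tr\dotU^*(n,m)\supseteq\Tr(\dotU^*)^+(n,m)$ and appeal to Proposition~\ref{triangular}. In the range $n+m\ge0$ that proposition gives a free $\Z$-basis $\{\rF^{(b)}_\mu b^+(\tau)\rE^{(a')}_\lambda\onen\}$ with $2(a'-b)=m-n$; taking $b=0$ (hence $\mu=\emptyset$), $\tau=\emptyset$, and therefore $a'=a$, one sees that each $\rE^{(a)}_\lambda\onen$, $\lambda\in P(a)$, is one of these basis vectors, so the family is linearly independent in $\Tr\dotU^*(n,m)$ and hence in the subgroup $\Tr(\dotU^*)^+(n,m)$. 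In the range $n+m\le0$ I would use the complementary basis $\{\rE^{(a')}_\lambda b^+(\tau)\rF^{(b)}_\mu\onen\}$, again specialized to $b=0$, $\mu=\emptyset$, $\tau=\emptyset$, after recording the trivial identifications $b^+(s_\emptyset)=\Id_\onen$ and $\rF^{(0)}_\emptyset\onen=\rE^{(0)}_\emptyset\onen$ (the identity morphism), so that the basis vector reads $\rE^{(a)}_\lambda\onen$ as wanted; the case $n+m=0$ is covered by either basis. Combining the two halves proves the proposition.

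The spanning step is entirely routine bookkeeping with Lemma~\ref{lem-compUp}: the only points needing a word are that the weight constraint pins the total index down to exactly $a$ and that the reduction terminates after $k-1$ merges, both of which are immediate. The substantive input is Proposition~\ref{triangular}; the one thing I would be careful about is checking that the vectors $\rE^{(a)}_\lambda\onen$ reappear unchanged as basis elements in every sign regime of $n+m$, after which their independence follows with no further work. I therefore do not anticipate any real obstacle beyond correctly lining up the conventions between $\Tr(\dotU^*)^+$ and $\Tr\dotU^*$.
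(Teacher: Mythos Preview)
Your proposal is correct and follows essentially the same approach as the paper, which simply cites Proposition~\ref{triangular} and Lemma~\ref{lem-compUp} without further detail. You have correctly unpacked how these two results combine: Lemma~\ref{lem-compUp} gives the spanning statement by merging composites, and Proposition~\ref{triangular} (specialized to $b=0$, $\mu=\emptyset$, $\tau=\emptyset$) gives linear independence by exhibiting each $\rE^{(a)}_\lambda\onen$ as a member of a free basis of the ambient group $\Tr\dotU^*(n,m)$.
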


Our goal in this section is to obtain a presentation for $\Tr(\dotU^*)^+$ and bases for $\Tr(\dotU^*)^+(n,m)$ that facilitate a direct comparison with $\UZ^+\LL$ and its basis given in Proposition~\ref{r15}.  To do this we will need to relate the basis for $\Tr(\dotU^*)^+(n,m)$ given in Proposition~\ref{aaaa} to one given by composites of the form
\begin{gather*}
  \rE^{(a_1)}_{l_1^{a_1}} \rE^{(a_2)}_{l_2^{a_2}} \dots \rE^{(a_p)}_{l_p^{a_p}} \onen,
\end{gather*}
where $a_1+a_2+\dots +a_p=a$, $l_1>l_2>\dots> l_p\geq 0$, $a_i\geq 1$, $p\geq 0$.

In what follows we utilize the lexicographic order on partitions, where $\lambda > \mu$ implies $\lambda_1 > \mu_1$ or else $\lambda_i = \mu_i$ for $1 \leq i \leq k$ and $\lambda_{k+1}> \mu_{k+1}$ for some $k \geq 1$.   The lemma below gives the leading term in this order for the change of basis from  $\rE_{j^a}^{(a)}\rE^{(b)}_{\lambda}\on$ to $\rE^{(a+b)}_{\tau}\on$.

\begin{lem}\label{ja}
  For  $\lambda\in P(b)$, $j>\lambda_1$, we have
  \begin{gather*}
    \rE_{j^a}^{(a)}\rE^{(b)}_{\lambda}\on-\rE^{(a+b)}_{j^a\cup\lambda}\on \in
    \Span_\modZ \{\rE^{(a+b)}_\tau\on \;|\; \tau \in P(a+b),\tau <j^a\cup\lambda\}.
  \end{gather*}
\end{lem}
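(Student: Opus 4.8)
The plan is to compute $\rE_{j^a}^{(a)}\rE^{(b)}_\lambda\on$ directly from the composition rule of Lemma~\ref{lem-compUp} and to isolate its leading term in the lexicographic order on $P(a+b)$. First I would record two elementary identities in $\Sym_a$: the rectangular Schur polynomial $s_{(j^a)}$ equals $\elem_a^j=(x_1\cdots x_a)^j$, and multiplication by $\elem_a$ sends $s_\tau$ to $s_{\tau+(1^a)}$; iterating, $s_{(j^a)}s_\tau=s_{\tau+(j^a)}$ for every $\tau\in P(a)$. Substituting $x=s_{(j^a)}$, $y=s_\lambda$ into Lemma~\ref{lem-compUp}, expanding $s_{\hat\tau}s_\lambda=\sum_\mu N^{\mu}_{\hat\tau\lambda}s_\mu$ in $\Sym_b$, and using bilinearity of $\wedge_{a,b}$, one obtains
\begin{gather*}
  \rE_{j^a}^{(a)}\rE^{(b)}_\lambda\on
  =\sum_{\tau\in P(a,b)}(-1)^{|\hat\tau|}\sum_{\mu}N^{\mu}_{\hat\tau\lambda}\,
  \rE^{(a+b)}\!\bigl(s_{(\tau+(j^a)-b)\cup\mu}\bigr)\on,
\end{gather*}
where every $\mu$ occurring has at most $b$ parts and satisfies $\mu_1\le\hat\tau_1+\lambda_1$ (a standard bound, since the $x_1$-degree of $s_{\hat\tau}$, resp. $s_\lambda$, is at most $\hat\tau_1$, resp. $\lambda_1$), and $(\tau+(j^a)-b)\cup\mu\in\tP(a+b)$ by the very definition of $\wedge_{a,b}$.

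Next I would single out the summand $\tau=(b^a)$, $\mu=\lambda$: then $\hat\tau$ is empty, $(-1)^{|\hat\tau|}=1$, $N^{\lambda}_{\emptyset\lambda}=1$, and $(\tau+(j^a)-b)\cup\lambda=(j^a)\cup\lambda=j^a\cup\lambda$, which is already a partition since $j>\lambda_1$. This summand therefore contributes exactly $\rE^{(a+b)}_{j^a\cup\lambda}\on$, and the lemma follows once one checks that every other pair $(\tau,\mu)$ contributes an element of $\Span_\modZ\{\rE^{(a+b)}_\nu\on\mid\nu\in P(a+b),\ \nu<j^a\cup\lambda\}$.

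For the remaining summands I would use the straightening rule \eqref{e7}: for $\gamma\in\tP(a+b)$ one has $s_\gamma=0$, or $s_\gamma=\pm s_\nu$ with the shifted parts $\nu_i+(a+b)-i$ being the decreasing reordering of the $\gamma_i+(a+b)-i$; in particular $\nu_1=\max_i(\gamma_i-i+1)$ when $s_\gamma\neq0$, and if the $k$ largest shifted parts of $\gamma$ are exactly $j+(a+b)-1,\dots,j+(a+b)-k$ and strictly exceed the remaining ones, then $\nu=(j^k)\cup\nu''$ where $\nu''$ comes from straightening the tail. Fix $\tau\in P(a,b)$ with $\tau\neq(b^a)$ and let $k<a$ be the number of its leading parts equal to $b$, so $\tau_{k+1}\le b-1$ and $\hat\tau_1=\#\{i:\tau_i<b\}=a-k$, whence $\mu_1-(a-k)\le\lambda_1\le j-1$. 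Writing $\gamma=(\tau+(j^a)-b)\cup\mu=(j^k)\cup\gamma''$, a short computation using that $\tau$ and $\mu$ are partitions shows that $\gamma''$ straightens to a partition $\nu''$ with
\begin{gather*}
  \nu''_1=\max\bigl(\tau_{k+1}+j-b,\ \mu_1-(a-k)\bigr)\le j-1<j,
\end{gather*}
so the leading $(j^k)$-block of $\gamma$ does not interact with the tail and $s_\gamma=\pm s_{(j^k)\cup\nu''}$ (or $s_\gamma=0$). Since $(j^k)\cup\nu''$ agrees with $j^a\cup\lambda$ in its first $k$ entries (both equal $j$) but has $(k{+}1)$st entry $\nu''_1<j=(j^a\cup\lambda)_{k+1}$ (using $k<a$), it is strictly smaller in the lexicographic order. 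The degenerate cases $a=0$ and $b=0$ make the asserted difference zero and are disposed of at the outset.

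The step I expect to be the main obstacle is the short computation in the last paragraph establishing $\nu''_1\le j-1$: one must verify that the top block of $b$'s in $\tau$ passes through both the wedge product and the straightening \eqref{e7} without mixing with the tail. This reduces to the two numerical inequalities $\tau_{k+1}+j-b\le j-1$ (immediate from $\tau_{k+1}\le b-1$) and $\mu_1-(a-k)\le j-1$, the latter combining the identity $\hat\tau_1=a-k$, the Schur-product bound $\mu_1\le\hat\tau_1+\lambda_1$, and the hypothesis $\lambda_1<j$; everything else is routine bookkeeping with the bialternant formula.
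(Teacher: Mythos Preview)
Your argument is correct and follows exactly the same route as the paper: both apply Lemma~\ref{lem-compUp} with $x=s_{j^a}$, use the identity $s_{j^a}s_\tau=s_{\tau+j}$ in $\Sym_a$, identify the term $\tau=(b^a)$ as contributing precisely $\rE^{(a+b)}_{j^a\cup\lambda}\on$, and then argue that all other terms straighten to partitions lexicographically below $j^a\cup\lambda$. The paper simply asserts this last step (``One can check that the other terms are contained in\ldots''), whereas you carry it out in detail via the bound $\hat\tau_1=a-k$, the Littlewood--Richardson estimate $\mu_1\le\hat\tau_1+\lambda_1$, and the shifted-parts description of straightening; your computation of $\nu''_1=\max(\tau_{k+1}+j-b,\ \mu_1-(a-k))\le j-1$ is correct and does exactly what is needed.
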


\begin{proof}
By Lemma \ref{a1}, we have
\begin{align*}
  \rE^{(a)}_{j^a}\rE^{(b)}_{\lambda}\on
  &=\sum_{\tau\in P(a,b)}(-1)^{\hat{\tau}}\rE^{(a+b)}(\wedge_{a,b}(s_{j^a}s_\tau\otimes s_{\hat{\tau}}s_\lambda ))\on\\
  &=\sum_{\tau\in P(a,b)}(-1)^{\hat{\tau}}\rE^{(a+b)}(\wedge_{a,b}(s_{\tau+j}\otimes \sum_{\nu\in P(b)}N^\nu_{\hat{\tau},\lambda}s_{\nu} ))\on\\
  &=\rE^{(a+b)}\left(\sum_{\tau\in P(a,b)}\sum_{\nu\in P(b)}
  (-1)^{\hat{\tau}}N^\nu_{\hat{\tau},\lambda} \wedge_{a,b}(s_{\tau+j}\otimes s_{\nu} )\right)\on\\
  &=\rE^{(a+b)}\left(\sum_{\tau\in P(a,b)}\sum_{\nu\in P(b)}
  (-1)^{\hat{\tau}}N^\nu_{\hat{\tau},\lambda}
  s_{(\tau_1+j-b,\dots,\tau_a+j-b,\nu_1,\dots,\nu_b)}\right)\on,
\end{align*}
where $\tau+j=(\tau_1+j,\dots,\tau_a+j)\in P(a)$.
Note that the term for $\tau=b^a$ in the above sum is exactly $s_{j^a\cup\lambda}$.
One can check that the other terms are contained in $\Span_\modZ \{s_\tau \;|\; \tau \in P(a+b),\tau <j^a\cup\lambda\}$.
\end{proof}

\begin{prop} \label{zzz}
The linear category $(\Tr\dotU^*)^+$ has the following presentation.
  \begin{itemize}
  \item
    Objects are integers $n\in \Z$.
  \item
    Morphisms are generated by $\rE^{(a)}_{l^a} \onen \in (\Tr\dotU^*)(n,n+2a)$
    for $a,l\geq 0$, $n\in\Z$.
  \item
    The morphisms satisfy the following relations:
    \begin{align}
      \label{e10}
      \rE^{(a)}_{l^a}\rE^{(b)}_{s^b} \onen &=
      \rE^{(b)}_{s^b} \rE^{(a)}_{l^a} \onen\quad \text{for $n\in\Z$, $a,b,l,s\ge0$},\\
      \label{e11}
      \rE^{(0)}_{l^0} \onen & = \onen \quad \text{for $n\in\Z$, $l\ge0$,} \\
      \label{e12}
      \rE^{(a)}_{l^a}\rE^{(b)}_{l^b} \onen &= \bins{a+b}{a} \rE^{(a+b)}_{l^{a+b}} \onen \quad \text{for $n\in\Z$, $a,b,l\ge0$}.
    \end{align}
  \end{itemize}
\end{prop}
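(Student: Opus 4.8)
## Proof proposal for Proposition \ref{zzz}

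The plan is to show that the abstract linear category $V$ defined by the stated generators and relations is isomorphic to $(\Tr\dotU^*)^+$, by exhibiting mutually inverse linear functors. First I would define a functor $G\col V\to(\Tr\dotU^*)^+$ sending $n\mapsto n$ and the generator $\rE^{(a)}_{l^a}\onen$ to the corresponding trace class; checking that $G$ is well defined amounts to verifying that relations \eqref{e10}--\eqref{e12} hold in $(\Tr\dotU^*)^+$. Relation \eqref{e11} is immediate since $\E^{(0)}\onen=\onen$. Relation \eqref{e12} follows from Lemma~\ref{lem-compUp} applied with $x=s_{(l^a)}$, $y=s_{(l^b)}$: the wedge product $\wedge_{a,b}(s_{l^a}s_\tau\otimes s_{\hat\tau}s_{l^b})$ is nonzero only for $\tau$ with $s_{l^a}s_\tau$ and $s_{\hat\tau}s_{l^b}$ both expandable in partitions that, after wedging, land on $l^{a+b}$; a direct count (or a comparison with the corresponding identity $[a]![b]!\to\binom{a+b}{a}[a+b]!$-type statement in $\UA$, specialized at $q=1$) gives the binomial coefficient. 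Relation \eqref{e10} is the commutativity statement; since each $\rE^{(a)}_{l^a}$ corresponds to a power-sum-type symmetric function expression in the thick calculus, this should follow from the symmetric-function identities already recalled, or again by specializing a known relation in $\UZ^+\LL$ (where the $E_i^{(a)}$ all commute).

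The heart of the argument is constructing the inverse functor, or equivalently showing that $G$ is bijective on each hom-group. By Proposition~\ref{aaaa}, $(\Tr\dotU^*)^+(n,n+2a)$ is free on $\{\rE^{(a)}_\lambda\onen\mid\lambda\in P(a)\}$. On the $V$-side, I would first argue that every morphism in $V$ is a $\Z$-linear combination of ``ordered monomials'' $\rE^{(a_1)}_{l_1^{a_1}}\cdots\rE^{(a_p)}_{l_p^{a_p}}\onen$ with $l_1>l_2>\cdots>l_p\ge 0$ and $a_i\ge 1$, using \eqref{e10} to sort the factors by exponent and \eqref{e12} to merge equal exponents. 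Then I would show these ordered monomials are sent by $G$ to a $\Z$-basis of $(\Tr\dotU^*)^+(n,n+2a)$. This is exactly where Lemma~\ref{ja} enters: by that lemma, $G$ maps $\rE^{(a_1)}_{l_1^{a_1}}\cdots\rE^{(a_p)}_{l_p^{a_p}}\onen$ to $\rE^{(a)}_{\nu}\onen$ plus lower terms in the lexicographic order, where $\nu=l_1^{a_1}\cup l_2^{a_2}\cup\cdots\cup l_p^{a_p}$ is the partition whose multiset of parts is recorded by the data $(l_i,a_i)$. Since the correspondence between strictly decreasing sequences $(l_1,\dots,l_p)$ with multiplicities $(a_1,\dots,a_p)$ summing to $a$ and partitions $\nu\in P(a)$ is a bijection, the change-of-basis matrix from ordered monomials to $\{\rE^{(a)}_\lambda\onen\}$ is unitriangular with respect to the lexicographic order, hence invertible over $\Z$. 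Therefore $G$ is an isomorphism on every hom-group, and the ordered monomials are a $\Z$-basis of $V(n,n+2a)$, so $G$ is an isomorphism of linear categories.

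The main obstacle I anticipate is the verification of relations \eqref{e10} and \eqref{e12} in $(\Tr\dotU^*)^+$ with the correct coefficients; this requires either a careful computation with the wedge product and Pieri's rule via Lemma~\ref{lem-compUp}, or a clean argument identifying $\rE^{(a)}_{l^a}\onen$ with a specific element ($E_l^{(a)}$-like) under the anticipated isomorphism $\Tr(\dotU^*)^+\cong\dUZ^+\LL$ and then importing the relations from $\UZ^+\LL$. Since the overall goal (Theorem~\ref{qqq}) will presumably match $\rE^{(a)}_{l^a}\onen$ with $1_?E_l^{(a)}1_n$ anyway, the cleanest route is probably to compute $G(\rE^{(a)}_{l^a}\rE^{(b)}_{s^b}\onen)$ directly via Lemma~\ref{lem-compUp}: one checks that the sum over $\tau\in P(a,b)$ collapses appropriately because $s_{l^a}s_\tau$ is supported on partitions with all parts $\ge l$ while $s_{\hat\tau}s_{s^b}$ constrains the complementary shape, and symmetry in $a\leftrightarrow b$ after the wedge forces \eqref{e10}. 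The bijection-of-bases step via Lemma~\ref{ja} is routine once the unitriangularity is set up, so the coefficient bookkeeping in the relations is where the real work lies; everything else is formal.
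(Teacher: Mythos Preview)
Your proposal is correct and follows essentially the same route as the paper: generation via Lemma~\ref{ja}, relation~\eqref{e12} via Lemma~\ref{lem-compUp} (the paper uses the clean identity $s_{(\tau+l)\cup(\hat\tau+l)}=(-1)^{|\hat\tau|}s_{l^{a+b}}$ for each $\tau\in P(a,b)$, together with $|P(a,b)|=\binom{a+b}{a}$), and the basis/completeness argument via unitriangularity, which the paper packages as the subsequent Corollary~\ref{cor-traceUp}. Your concern about~\eqref{e10} is well placed---the paper's written proof does not verify it explicitly either.
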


\begin{proof}

First we prove that $\Tr(\dotU^*)^+$ is spanned by composites of the morphisms $\rE^{(a)}_{l^a}\on$, $n\in\Z$, $a,l\ge0$.  Suppose we are given
$\rE^{(a)}_\lambda\onen$, $n\in Z$, $a\ge0$, $\lambda\in P(a)$.
We will show by induction on $a$ and $\lambda$ (in the lexicographic
order) that it is a linear combination of composites of morphisms of
the form $\rE^{(i)}_{l^i}\onen$ with $i,l\ge0$ and $n\in\Z$.  If $a=0$, then there is
nothing to prove.  Suppose $a>0$.  If $\lambda=(\lambda_1)^a$, then
we are done. Otherwise, let $k\ge1$ be such that
$\lambda_1=\dots=\lambda_{k}>\lambda_{k+1}$.  Then, by Lemma~\ref{ja}, we have
    \begin{gather*}
      \rE^{(a)}_\lambda\on-\rE^{(k)}_{\lambda_1^k}\rE^{(a-k)}_{(\lambda_{k+1},\dots,\lambda_a)}\on
       \in
      \Span_\modZ \{\rE^{(a)}_\tau\on \;|\; \tau \in P(a),\tau <\lambda\}.
    \end{gather*}
By the induction hypothesis, it follows that $\rE^{(a)}_\lambda\on$ is as
desired.

Now we prove the relations given in the statement.
The relation \eqref{e11} is obvious.  Relation \eqref{e12} is proven using Lemma \ref{a1},
  \begin{align*}
    \rE^{(a)}_{l^a}\rE^{(b)}_{l^b}\on
    &=\sum_{\tau\in P(a,b)}(-1)^{\hat{\tau}}\rE^{(a+b)}(\wedge_{a,b}(s_{l^a}s_\tau\otimes s_{\hat{\tau}}s_{l^b} ))\on\\
    &=\sum_{\tau\in P(a,b)}(-1)^{\hat{\tau}}\rE^{(a+b)}(\wedge_{a,b}(s_{\tau+l}\otimes s_{\hat{\tau}+l} ))\on\\
    &=\rE^{(a+b)}\left(\sum_{\tau\in P(a,b)}(-1)^{\hat{\tau}}(s_{(\tau+l)\cup (\hat{\tau}+l)})\right)\on.
  \end{align*}
  Since we have
  \begin{gather*}
    s_{(\tau+l)\cup (\hat{\tau}+l)}=(-1)^{\hat{\tau}}s_{l^{a+b}}
  \end{gather*}
  for $\tau\in P(a,b)$ and we have $|P(a,b)|=\binom{a+b}{a}$ relation
  \eqref{e12} follows.
\end{proof}

\begin{cor}
\label{cor-traceUp}
As a $\Z$-module, $(\Tr\dotU^*)^+(n,n+2a)$ has a basis given by
\begin{gather}
  \label{e13}
  \rE^{(a_1)}_{l_1^{a_1}} \rE^{(a_2)}_{l_2^{a_2}} \dots \rE^{(a_p)}_{l_p^{a_p}} \onen,
\end{gather}
where $a_1+a_2+\dots +a_p=a$, $l_1>l_2>\dots> l_p\geq 0$, $a_i\geq 1$, $p\geq 0$.
\end{cor}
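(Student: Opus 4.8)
The plan is to derive the basis for $(\Tr\dotU^*)^+(n,n+2a)$ directly from the presentation established in Proposition \ref{zzz}, combined with the already-known basis from Proposition \ref{aaaa}. First I would observe that, by Proposition \ref{zzz}, the abelian group $(\Tr\dotU^*)^+(n,n+2a)$ is spanned by the composites \eqref{e13}: indeed the relations \eqref{e10}--\eqref{e12} let us first reorder any composite of the generators $\rE^{(a_i)}_{l_i^{a_i}}\onen$ into nondecreasing (say) order of the indices $l_i$ using the commutativity relation \eqref{e10}, then merge adjacent factors with equal $l_i$ using \eqref{e12} (at the cost of a binomial coefficient, which does not affect the spanning statement), and finally discard trivial factors $\rE^{(0)}_{l^0}\onen=\onen$ using \eqref{e11}. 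After these moves any element of $(\Tr\dotU^*)^+(n,n+2a)$ is a $\Z$-linear combination of the elements \eqref{e13} with $l_1>l_2>\dots>l_p\ge0$ and $a_i\ge1$, $a_1+\dots+a_p=a$.

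It then remains to prove linear independence of the elements \eqref{e13}. The key step is to expand each composite \eqref{e13} in terms of the known basis $\{\rE^{(a)}_\lambda\onen\mid\lambda\in P(a)\}$ of Proposition \ref{aaaa} and show that the resulting change-of-basis matrix is unitriangular with respect to a suitable order. This is exactly what Lemma \ref{ja} provides: iterating it shows that
\begin{gather*}
  \rE^{(a_1)}_{l_1^{a_1}}\rE^{(a_2)}_{l_2^{a_2}}\dots\rE^{(a_p)}_{l_p^{a_p}}\onen
  -\rE^{(a)}_{l_1^{a_1}\cup l_2^{a_2}\cup\dots\cup l_p^{a_p}}\onen
\end{gather*}
lies in the $\Z$-span of $\{\rE^{(a)}_\tau\onen\mid\tau\in P(a),\ \tau<l_1^{a_1}\cup\dots\cup l_p^{a_p}\}$ in the lexicographic order, since the concatenation $l_1^{a_1}\cup\dots\cup l_p^{a_p}$ is already a partition (here we use $l_1>l_2>\dots>l_p$). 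The map sending the data $(a_1,l_1),\dots,(a_p,l_p)$ with $l_1>\dots>l_p\ge0$, $a_i\ge1$, $\sum a_i=a$, to the partition $l_1^{a_1}\cup\dots\cup l_p^{a_p}\in P(a)$ is a bijection onto $P(a)$ (grouping a partition into its blocks of equal parts). Hence the transition matrix from the family \eqref{e13} to the basis of Proposition \ref{aaaa}, ordered compatibly with the lexicographic order on the associated partitions, is unitriangular over $\Z$, so it is invertible and the family \eqref{e13} is a $\Z$-basis.

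The main obstacle, and the only point requiring genuine care, is the bookkeeping of the order: one must check that the leading partition in Lemma \ref{ja}, applied repeatedly from the left, is indeed the full concatenation $l_1^{a_1}\cup\dots\cup l_p^{a_p}$ and that all correction terms produced at each stage are strictly lexicographically smaller, so that no cancellation can raise the order back up. This follows because at the $i$th stage we apply Lemma \ref{ja} with $j=l_i>l_{i+1}\ge(\text{first part of the remaining partition})$, matching the hypothesis $j>\lambda_1$ of the lemma; the strict inequalities $l_1>\dots>l_p$ guarantee this throughout. Once this is verified the corollary follows immediately, and I would simply remark that the argument also re-proves, via a different route, that $|P(a)|$ equals the number of such tuples, consistent with $\binom{a+b}{a}=|P(a,b)|$ used in the proof of Proposition \ref{zzz}.
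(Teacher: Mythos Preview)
Your proposal is correct and follows essentially the same approach as the paper: spanning via the relations of Proposition \ref{zzz}, and linear independence via repeated application of Lemma \ref{ja} to obtain a unitriangular change of basis to the $\rE^{(a)}_\lambda\onen$ of Proposition \ref{aaaa}. Your write-up is in fact more detailed than the paper's, spelling out the bijection between the tuples and $P(a)$ and verifying that the hypothesis $j>\lambda_1$ of Lemma \ref{ja} holds at each stage.
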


\begin{proof}
Using the relation \ref{e12} it is clear that $(\Tr\dotU^*)^+(n,n+2a)$ is spanned by the
elements given in \eqref{e13}.  To see that these elements are linearly
independent, we use Lemma \ref{ja} repeatedly to deduce that
  \begin{gather*}
    \rE^{(a_1)}_{l_1^{a_1}} \rE^{(a_2)}_{l_2^{a_2}} \dots
    \rE^{(a_p)}_{l_p^{a_p}} \onen
    -\rE^{(a)}_{(l_1^{a_1},l_2^{a_2},\dots,l_p^{a_p})}\onen
    \in\Span\{\rE^{(a)}_{\tau} \onen \;|\;\tau\in P(a),\tau< (l_1^{a_1},l_2^{a_2},\dots,l_p^{a_p})\},
  \end{gather*}
so that the change of basis between elements in \eqref{e13} and those in Proposition~\ref{aaaa} is upper triangular.
\end{proof}

\subsection{Commutation relations in $\mathrm{Tr}\,\U^*$}
Before the main result we need the following two lemmas.

\begin{lem} The equation
\label{ap2}
$$b^-(p_m)\E \onen-\E b^-(p_m)\onen = 2\E_m \onen$$
holds in the 2-category $\U.$
\end{lem}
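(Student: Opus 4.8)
The identity is a \emph{bubble slide}: its left-hand side is the difference between placing the power-sum bubble $b^-(p_m)$ in the region $n+2$ to the left of the strand $\E\onen$ and in the region $n$ to its right, and in $\U$ this difference is controlled by the relations relating curls and bubbles. Rather than slide $b^-(p_m)$ directly, I would assemble all $m$ into generating functions, reduce to the slide rule for the generating function of counterclockwise bubbles, and extract the answer by a logarithmic-derivative computation.

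Write $x\col\E\onen\Rightarrow\E\onen$ for the dot $2$-morphism, so that $\E_m\onen=x^m$, and recall that $\End(\E\onen)$ is commutative, with $x$ commuting with all bubbles. For a region label $\ell\in\{n,n+2\}$ set $c_\ell(t)=\sum_{i\ge0}b^-(h_i)\,t^i$, the generating function of counterclockwise bubbles in region $\ell$; since $b^-(h_0)=\Id$ this is an invertible element of $\End(\onenn{\ell})[[t]]$, and horizontal composition with $\mathrm{id}_{\E\onen}$ carries it into $\End(\E\onen)[[t]]$. The input needed from \cite{Lau1} (equivalently \cite{KLMS}) is the bubble-slide rule, which in generating-function form reads
\begin{gather*}
  c_{n+2}(t)\cdot\mathrm{id}_{\E\onen}\;=\;\mathrm{id}_{\E\onen}\cdot(1-tx)^{-2}\,c_n(t)\qquad\text{in }\End(\E\onen)[[t]];
\end{gather*}
this repackages the rule expressing a counterclockwise bubble just to the left of an $\E$-strand in terms of dots on the strand and counterclockwise bubbles just to its right, and follows from relations \eqref{rei1}--\eqref{rei2} together with the infinite Grassmannian relation \eqref{gras}.

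Next I would apply Newton's identity $\sum_{m\ge1}p_m\,t^{m-1}=H'(t)/H(t)$ with $H(t)=\sum_{i\ge0}h_i\,t^i\in\Sym[[t]]$ under the ring homomorphism $b^-\col\Sym\to\End(\onen)$ (extended coefficientwise), obtaining $\sum_{m\ge1}b^-(p_m)\,t^{m-1}=c_n'(t)/c_n(t)=\frac{d}{dt}\log c_n(t)$, and the same with $n+2$ in place of $n$. Subtracting inside $\End(\E\onen)[[t]]$ and using the slide rule together with the multiplicativity of $\frac{d}{dt}\log$,
\begin{align*}
  \sum_{m\ge1}\bigl(b^-(p_m)\E\onen-\E\,b^-(p_m)\onen\bigr)t^{m-1}
  &=\frac{d}{dt}\Bigl(\log\bigl(c_{n+2}(t)\,\mathrm{id}_{\E\onen}\bigr)-\log\bigl(\mathrm{id}_{\E\onen}\,c_n(t)\bigr)\Bigr)\\
  &=\frac{d}{dt}\log(1-tx)^{-2}=\frac{2x}{1-tx}=2\sum_{m\ge1}x^m\,t^{m-1}.
\end{align*}
Comparing coefficients of $t^{m-1}$ gives $b^-(p_m)\E\onen-\E\,b^-(p_m)\onen=2x^m=2\E_m\onen$.

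The only non-formal ingredient is the generating-function bubble-slide rule, and the one point that needs care is getting its exponent and signs right in the conventions of \cite{Lau1}; in fact, running the computation backwards shows the Lemma is \emph{equivalent} to the slide rule $c_{n+2}(t)=(1-tx)^{-2}c_n(t)$, so this step carries all the weight, while everything afterwards is formal power-series manipulation over the commutative ring $\End(\E\onen)$, the logarithmic derivative turning the multiplicative slide rule into the additive commutator relation. (If one prefers to avoid generating functions, one can instead expand $p_m$ in the $h_i$ by Newton's identities and slide term by term, inducting on $m$.)
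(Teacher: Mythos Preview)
Your argument is correct and genuinely different from the paper's. The paper proves the lemma by a direct, term-by-term computation: it writes $b^-(p_m)$ in terms of products of clockwise and counterclockwise bubbles, slides each bubble individually across the $\E$-strand using the explicit bubble-slide relations, reindexes several nested sums, and collapses the result telescopically to $\E b^-(p_m)\onen + 2\E_m\onen$. Your approach instead packages all $m$ simultaneously into the generating series $c_\ell(t)=\sum_{i\ge0}b^-(h_i)t^i$, invokes the multiplicative bubble-slide rule $c_{n+2}(t)=(1-tx)^{-2}c_n(t)$ once, and then extracts the statement for every $m$ at once by applying $f\mapsto f'/f$ together with Newton's identity $\sum_{m\ge1}p_mt^{m-1}=H'(t)/H(t)$.

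What each buys: the paper's computation is self-contained from the defining relations \eqref{rei1}--\eqref{rei2} but is laborious and somewhat opaque; your argument is shorter and makes transparent \emph{why} the coefficient is exactly $2$ (it is the exponent in $(1-tx)^{-2}$). Two small remarks: first, in the paper's conventions $b^-(h_i)$ is the \emph{clockwise} bubble (see \eqref{defbh}), not counterclockwise as you wrote, though this does not affect the computation. Second, you should phrase the ``$\frac{d}{dt}\log$'' step purely as $c'(t)c(t)^{-1}$, since the ground ring is $\Z$ and $\log$ itself is not defined; the logarithmic derivative, however, is perfectly well-defined because the series have invertible constant term, and the multiplicative-to-additive property $(fg)'/(fg)=f'/f+g'/g$ holds in any commutative ring. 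With those cosmetic fixes the proof is clean.
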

\begin{proof} The proof is by direct computation using the relations in $\U$.
\begin{align*}
b^-(p_m)\E\onen & =\sum_{l=0}^ml\xy
 (-7,0)*{\rbub};
 (-1,-2)*{\scriptstyle m-l};
 (7,0)*{\lbub};
 (11,-2)*{\scriptstyle l};
 {\ar (14,-5)*{}; (14,5)*{}};
 (18,2)*{n};
 (-12,0)*{}; (21,0)*{};
\endxy  \\
& = \sum_{l=0}^m\sum_{i=0}^ll(l+1-i)\xy
 (-7,0)*{\rbub};
 (-1,-2)*{\scriptstyle m-l};
 (12,0)*{\lbub};
 (16,-2)*{\scriptstyle i};
 {\ar (3,-5)*{}; (3,5)*{}};
 (3,0)*{\bullet};
 (6,1)*{\scriptstyle l-i};
 (18,2)*{n};
 (-12,0)*{}; (21,0)*{};
\endxy
 \\
& = \sum_{i=0}^m\sum_{l=i}^ml(l+1-i)\xy
 (-7,0)*{\rbub};
 (-1,-2)*{\scriptstyle m-l};
 (12,0)*{\lbub};
 (16,-2)*{\scriptstyle i};
 {\ar (3,-5)*{}; (3,5)*{}};
 (3,0)*{\bullet};
 (6,1)*{\scriptstyle l-i};
 (18,2)*{n};
 (-12,0)*{}; (21,0)*{};
\endxy
\\
& = \sum_{i=0}^m\left\{\sum_{l=i}^ml(l+1-i)\xy
 (-1,0)*{\rbub};
 (5,-2)*{\scriptstyle m-l};
 (12,0)*{\lbub};
 (16,-2)*{\scriptstyle i};
 {\ar (-11,-5)*{}; (-11,5)*{}};
 (-11,0)*{\bullet};
 (-8,1)*{\scriptstyle l-i};
 (18,2)*{n};
 (-13,0)*{}; (21,0)*{};
\endxy
  \; -2 \sum_{l=i}^ml(l+1-i)\xy
 (3,0)*{\rbub};
 (11,-2)*{\scriptstyle m-l-1};
 (20,0)*{\lbub};
 (24,-2)*{\scriptstyle i};
 {\ar (-11,-5)*{}; (-11,5)*{}};
 (-11,0)*{\bullet};
 (-6,1)*{\scriptstyle l-i+1};
 (26,2)*{n};
 (-13,0)*{}; (21,0)*{};
\endxy \right.\\
& \left. \quad\quad\quad + \sum_{l=i}^ml(l+1-i)\xy
 (3,0)*{\rbub};
 (11,-2)*{\scriptstyle m-l-2};
 (20,0)*{\lbub};
 (24,-2)*{\scriptstyle i};
 {\ar (-11,-5)*{}; (-11,5)*{}};
 (-11,0)*{\bullet};
 (-6,1)*{\scriptstyle l-i+2};
 (26,2)*{n};
 (-13,0)*{}; (21,0)*{};
\endxy \right\}
 \\
& = \sum_{i=0}^m\left\{\sum_{l=i}^ml(l+1-i)\xy
 (-1,0)*{\rbub};
 (5,-2)*{\scriptstyle m-l};
 (12,0)*{\lbub};
 (16,-2)*{\scriptstyle i};
 {\ar (-11,-5)*{}; (-11,5)*{}};
 (-11,0)*{\bullet};
 (-8,1)*{\scriptstyle l-i};
 (18,2)*{n};
 (-13,0)*{}; (21,0)*{};
\endxy \; -2 \sum_{l=i+1}^{m+1}(l-1)(l-i)\xy
 (-1,0)*{\rbub};
 (5,-2)*{\scriptstyle m-l};
 (12,0)*{\lbub};
 (16,-2)*{\scriptstyle i};
 {\ar (-11,-5)*{}; (-11,5)*{}};
 (-11,0)*{\bullet};
 (-8,1)*{\scriptstyle l-i};
 (18,2)*{n};
 (-13,0)*{}; (21,0)*{};
\endxy \right.\\
& \left. \quad\quad\quad + \sum_{l=i+2}^{m+2}(l-2)(l-1-i)\xy
 (-1,0)*{\rbub};
 (5,-2)*{\scriptstyle m-l};
 (12,0)*{\lbub};
 (16,-2)*{\scriptstyle i};
 {\ar (-11,-5)*{}; (-11,5)*{}};
 (-11,0)*{\bullet};
 (-8,1)*{\scriptstyle l-i};
 (18,2)*{n};
 (-13,0)*{}; (21,0)*{};
\endxy \right\}
\\
& = \sum_{i=0}^m\left\{i\xy
 (-1,0)*{\rbub};
 (5,-2)*{\scriptstyle m-i};
 (12,0)*{\lbub};
 (16,-2)*{\scriptstyle i};
 {\ar (-7,-5)*{}; (-7,5)*{}};
 (18,2)*{n};
 (-10,0)*{}; (21,0)*{};
\endxy + \sum_{l=i+1}^m 2\; \xy
 (-1,0)*{\rbub};
 (5,-2)*{\scriptstyle m-l};
 (12,0)*{\lbub};
 (16,-2)*{\scriptstyle i};
 {\ar (-11,-5)*{}; (-11,5)*{}};
 (-11,0)*{\bullet};
 (-8,1)*{\scriptstyle l-i};
 (18,2)*{n};
 (-13,0)*{}; (21,0)*{};
\endxy \right\}
 \\
& = \E b^-(p_m)\onen+2\sum_{i=0}^m\sum_{l=1}^{m-i} \xy
 (-4,0)*{\rbub};
 (4,-2)*{\scriptstyle m-i-l};
 (13,0)*{\lbub};
 (17,-2)*{\scriptstyle i};
 {\ar (-11,-5)*{}; (-11,5)*{}};
 (-11,0)*{\bullet};
 (-9,1)*{\scriptstyle l};
 (19,2)*{n};
 (-13,0)*{}; (22,0)*{};
\endxy
 \\
& = \E b^-(p_m)\onen+2\sum_{l=1}^m\sum_{i=0}^{m-l} \xy
 (-4,0)*{\rbub};
 (4,-2)*{\scriptstyle m-i-l};
 (13,0)*{\lbub};
 (17,-2)*{\scriptstyle i};
 {\ar (-11,-5)*{}; (-11,5)*{}};
 (-11,0)*{\bullet};
 (-9,1)*{\scriptstyle l};
 (19,2)*{n};
 (-13,0)*{}; (22,0)*{};
\endxy  \\
& = \E b^-(p_m)\onen + 2 \;\xy
 {\ar (0,-5)*{}; (0,5)*{}};
 (0,0)*{\bullet};
 (2,1)*{\scriptstyle m};
 (5,-2)*{n};
 (-2,0)*{}; (8,0)*{};
\endxy
\end{align*}
\end{proof}

\begin{lem}
  \label{r7}
The commutation relation
  \begin{gather*}
    \rE_i\rF_j\onen-\rF_j\rE_i\onen =
    \begin{cases}
      n\onen&\text{if $i+j=0$}\\
      b^-(p_{i+j})\onen&\text{otherwise},
    \end{cases}
  \end{gather*}
  holds in $\mathrm{Tr}\,\U^*$.
\end{lem}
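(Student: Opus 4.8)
The plan is to realize both $\rE_i\rF_j\onen$ and $\rF_j\rE_i\onen$ as traces of explicit dotted $2$-morphisms and then to combine the mixed $EF$-relations of $\U$ with the cyclicity $[fg]=[gf]$ of the trace. Unwinding the definition of composition in $\Tr(\U^*)$, the class $\rE_i\rF_j\onen$ is that of the $2$-endomorphism $\phi$ of $\E\F\onen$ carrying $i$ dots on the $\E$-strand and $j$ dots on the $\F$-strand, while $\rF_j\rE_i\onen$ is the class of the analogous dotted $2$-endomorphism $\psi$ of $\F\E\onen$. I will carry out the argument for $n\ge 0$; the case $n\le 0$ is symmetric, obtained by exchanging the roles of $\E$ and $\F$ and of the two mixed relations, and $n=0$ falls under either.

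When $i=j=0$ the claim is immediate: in $\U^*$ one has $\E\F\onen\cong\F\E\onen\oplus\onen^{\oplus n}$, so $[\mathrm{id}_{\E\F\onen}]-[\mathrm{id}_{\F\E\onen}]=n[\mathrm{id}_{\onen}]=n\onen$ by additivity of the trace (Lemma \ref{r14}). For arbitrary $i,j$ I would write $\phi=\phi\circ\mathrm{id}_{\E\F\onen}$ and substitute the mixed relation \eqref{rei2}. This expresses $\phi$ as $(-1)$ times a term that factors through $\F\E\onen$ via the two sideways crossings $c\col\E\F\onen\Rightarrow\F\E\onen$ and $c'\col\F\E\onen\Rightarrow\E\F\onen$, plus a sum over $f_1+f_2+f_3=n-1$ of terms that factor through $\onen$ (a dotted cap, a dotted clockwise bubble $b^+$, and a dotted cup). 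Applying cyclicity, the first term becomes $-[c\,\phi\,c']$; pushing the $i+j$ dots through $c$ and $c'$ (using the dot-slide and curl relations \eqref{rei1} and the normalization of $cc'$ as the identity of $\F\E\onen$ up to bubble corrections) turns this into $[\psi]$ together with further dotted bubbles supported on $\onen$. Each of the remaining $\sum_{f_1+f_2+f_3=n-1}$ terms becomes, again by cyclicity, a product in $\End(\onen)\cong\Sym$ of a dotted clockwise bubble with the fake (negative-degree) dotted bubble formed by the cap and the cup together with the trace closure.

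After subtracting $\rF_j\rE_i\onen=[\psi]$, the whole computation reduces to an identity in $\End(\onen)\cong\Sym$: the sum of all the resulting dotted bubble products must equal $b^-(p_{i+j})\onen$. I would prove this by first eliminating the fake bubbles via the recursive relation \eqref{gras} (which says the generating series of clockwise and of counterclockwise bubbles are mutually inverse), then passing between $h$-bubbles and $\elem$-bubbles via \eqref{defbh}--\eqref{defbe}, and finally applying the Newton identities of Section \ref{subsec_sympoly} that express the power sum $p_{i+j}$ in terms of the complete symmetric functions.

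The main obstacle is precisely this bubble bookkeeping: tracking the signs (those of \eqref{rei1} and \eqref{rei2}, and the sign in $b^+(\elem_i)=(-1)^i b^-(h_i)$), the degree shifts attached to cups and caps, and the fake bubbles, and then recognizing the collapsed sum as $p_{i+j}$. A closely related technical point is pinning down the exact effect of sliding dots through the sideways crossings $c,c'$ and the precise value of $cc'$ (the identity on $\F\E\onen$ modulo bubbles); this is what is responsible for the right-hand side being nonzero even when $n=0$ and $i+j>0$, and the identities required here can be extracted from \cite{KLMS}.
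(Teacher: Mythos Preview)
Your strategy is the same as the paper's: use the two mixed $\E\F$/$\F\E$ relations with dots, take traces, let cyclicity identify the double-crossing terms, and reduce what remains to a symmetric-function identity in $\End(\onen)$. Your proof will go through, but the paper's execution is considerably cleaner, and two points in your sketch would cause trouble if taken literally.

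First, there are no ``further dotted bubbles'' from sliding the $i,j$ dots through the sideways crossings. Dots commute with sideways crossings in $\U$ (this is part of the cyclicity of the $2$-morphisms with respect to the biadjoint structure), so the curl relations \eqref{rei1} play no role here. The paper simply composes \eqref{rei2} with the dot endomorphism, which has the sole effect of replacing the summation range $f_1+f_2+f_3=n-1$ by $f_1+f_2+f_3=n-1+m$ where $m=i+j$; no extra correction terms appear.

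Second, your description of $cc'$ as ``the identity of $\F\E\onen$ up to bubble corrections'' has the wrong sign. For $n\ge0$ the $\F\E$ relation has an empty sum, so the double crossing on $\F\E$ equals $-\mathrm{id}_{\F\E}$ \emph{exactly}. This minus sign is what makes the two double-crossing contributions cancel (rather than add) after cyclicity. With these two simplifications the difference $\rE_i\rF_j\onen-\rF_j\rE_i\onen$ becomes, after closing the cap--bubble--cup terms into bubbles via $[gf]=[fg]$, the sum $\sum_{l=0}^m(l+1)\,b^+(h_{m-l})b^-(h_l)\onen$. The paper then uses the Grassmannian relation \eqref{gras} to drop the constant shift, rewrites $b^+(h_{m-l})=(-1)^{m-l}b^-(\elem_{m-l})$ via \eqref{defbe}, and invokes the identity $\sum_l(-1)^{m-l}l\,h_l\elem_{m-l}=p_m$ (equivalently $p_m=\sum_{k=0}^{m-1}(-1)^k s_{(m-k,1^k)}$). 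This is more direct than going through the full Newton recursion, though your route would also arrive at $b^-(p_m)$.
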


\begin{proof}
Let $m:=i+j$. For $n\geq 0$, the relations
$$
\xy
{\ar (0,-8)*{}; (0,8)*{}};
(0,0)*{\bullet};
(-2,1)*{\scriptstyle i};
(-4,0)*{};(3,0)*{};
\endxy\xy
{\ar (0,8)*{}; (0,-8)*{}};
(0,0)*{\bullet};
(-2,1)*{\scriptstyle j};
(-4,0)*{};(3,0)*{};
\endxy = -\xy
(-3,-8)*{};(-3,8)*{} **\crv{(-3,-4) & (3,-4) & (3,4) & (-3,4)}?(1)*\dir{>}?(.5)*{\bullet};
(3,-8)*{};(3,8)*{} **\crv{(3,-4) & (-3,-4) & (-3,4) & (3,4)}?(0)*\dir{<}?(.5)*{\bullet};
(-4.5,1)*{\scriptstyle j};
(4.5,1)*{\scriptstyle i};
(-7,0)*{};(7,0)*{};
\endxy + \sum_{f_1+f_2+f_3=n-1+m}\xy
 (3,9)*{};(-3,9)*{} **\crv{(3,4) & (-3,4)} ?(1)*\dir{>} ?(.2)*{\bullet};
(5,6)*{\scriptstyle f_1};
 (0,0)*{\rbub};
(5,-2)*{\scriptstyle f_2};
 (-3,-9)*{};(3,-9)*{} **\crv{(-3,-4) & (3,-4)} ?(1)*\dir{>}?(.8)*{\bullet};
(5,-6)*{\scriptstyle f_3};
 (-6,0)*{};(8,0)*{};
\endxy
$$
$$
\xy
{\ar (0,8)*{}; (0,-8)*{}};
(0,0)*{\bullet};
(-2,1)*{\scriptstyle j};
(-4,0)*{};(3,0)*{};
\endxy\xy
{\ar (0,-8)*{}; (0,8)*{}};
(0,0)*{\bullet};
(-2,1)*{\scriptstyle i};
(-4,0)*{};(3,0)*{};
\endxy = -\xy
(-3,-8)*{};(-3,8)*{} **\crv{(-3,-2) & (3,-2) & (3,5) & (-3,5)}?(0)*\dir{<}?(.08)*{\bullet};
(3,-8)*{};(3,8)*{} **\crv{(3,-2) & (-3,-2) & (-3,5) & (3,5)}?(1)*\dir{>}?(.08)*{\bullet};
(-4.5,-4)*{\scriptstyle j};
(4.5,-4)*{\scriptstyle i};
(-7,0)*{};(7,0)*{};
\endxy
$$
imply
$$
 \rE_i\rF_j\onen-\rF_j\rE_i\onen = \sum_{l=0}^m(l+1)\xy
 (-7,0)*{\rbub};
 (-1,-2)*{\scriptstyle m-l};
 (7,0)*{\lbub};
 (11,-2)*{\scriptstyle l};
 (-13,0)*{}; (12,0)*{};
\endxy = \sum_{l=0}^ml\xy
 (-7,0)*{\rbub};
 (-1,-2)*{\scriptstyle m-l};
 (7,0)*{\lbub};
 (11,-2)*{\scriptstyle l};
 (-13,0)*{}; (12,0)*{};
\endxy =
$$
$$
= \sum_{l=0}^m (-1)^{m-l}lb^-(h_l)b^-(\elem_{m-l})\onen = b^-\left[\sum_{l=0}^m(-1)^{m-l}lh_l\elem_{m-l}\right]\onen = $$
$$
=b^-\left[s_m-s_{m-1,1}+s_{m-2,1^2}-\dots +(-1)^{m-1}s_{1^m}\right]\onen = b^-(p_m)\onen,
$$
where we use that $\sum^m_{l=0} (-1)^lh_{m-l}\elem_l=0$ and the identity
 \begin{gather*}
    p_m=s_m-s_{m-1,1}+s_{m-2,1^2}-\dots +(-1)^{m-1}s_{1^m}\in \Sym.
  \end{gather*}
If $n\leq 0$ then the relations
$$
\xy
{\ar (0,-8)*{}; (0,8)*{}};
(0,0)*{\bullet};
(-2,1)*{\scriptstyle i};
(-4,0)*{};(3,0)*{};
\endxy\xy
{\ar (0,8)*{}; (0,-8)*{}};
(0,0)*{\bullet};
(-2,1)*{\scriptstyle j};
(-4,0)*{};(3,0)*{};
\endxy = -\xy
(-3,-8)*{};(-3,8)*{} **\crv{(-3,-2) & (3,-2) & (3,5) & (-3,5)}?(1)*\dir{>}?(.08)*{\bullet};
(3,-8)*{};(3,8)*{} **\crv{(3,-2) & (-3,-2) & (-3,5) & (3,5)}?(0)*\dir{<}?(.08)*{\bullet};
(-4.5,-4)*{\scriptstyle i};
(4.5,-4)*{\scriptstyle j};
(-7,0)*{};(7,0)*{};
\endxy
$$
$$
\xy
{\ar (0,8)*{}; (0,-8)*{}};
(0,0)*{\bullet};
(-2,1)*{\scriptstyle j};
(-4,0)*{};(3,0)*{};
\endxy\xy
{\ar (0,-8)*{}; (0,8)*{}};
(0,0)*{\bullet};
(-2,1)*{\scriptstyle i};
(-4,0)*{};(3,0)*{};
\endxy = -\xy
(-3,-8)*{};(-3,8)*{} **\crv{(-3,-4) & (3,-4) & (3,4) & (-3,4)}?(0)*\dir{<}?(.5)*{\bullet};
(3,-8)*{};(3,8)*{} **\crv{(3,-4) & (-3,-4) & (-3,4) & (3,4)}?(1)*\dir{>}?(.5)*{\bullet};
(-4.5,1)*{\scriptstyle j};
(4.5,1)*{\scriptstyle i};
(-7,0)*{};(7,0)*{};
\endxy + \sum_{f_1+f_2+f_3=-n-1+m}\xy
 (3,9)*{};(-3,9)*{} **\crv{(3,4) & (-3,4)} ?(0)*\dir{<} ?(.2)*{\bullet};
(5,6)*{\scriptstyle f_1};
 (0,0)*{\lbub};
(5,-2)*{\scriptstyle f_2};
 (-3,-9)*{};(3,-9)*{} **\crv{(-3,-4) & (3,-4)} ?(0)*\dir{<}?(.8)*{\bullet};
(5,-6)*{\scriptstyle f_3};
 (-6,0)*{};(8,0)*{};
\endxy
$$
imply
$$
 \rE_i\rF_j\onen-\rF_j\rE_i\onen = -\sum_{l=0}^m(l+1)\xy
 (-7,0)*{\lbub};
 (-1,-2)*{\scriptstyle m-l};
 (7,0)*{\rbub};
 (11,-2)*{\scriptstyle l};
 (-13,0)*{}; (12,0)*{};
\endxy =
$$
$$
= \sum_{l=0}^m(m-l)\xy
 (-7,0)*{\lbub};
 (-1,-2)*{\scriptstyle m-l};
 (7,0)*{\rbub};
 (11,-2)*{\scriptstyle l};
 (-13,0)*{}; (12,0)*{};
\endxy = \sum_{l=0}^ml\xy
 (4,0)*{\rbub};
 (-3,-2)*{\scriptstyle l};
 (-7,0)*{\lbub};
 (10,-2)*{\scriptstyle m-l};
 (-13,0)*{}; (12,0)*{};
\endxy =
$$
$$
= \sum_{l=0}^m (-1)^{m-l}lb^-(h_l)b^-(\elem_{m-l})\onen = b^-\left[\sum_{l=0}^m(-1)^{m-l}lh_l\elem_{m-l}\right] \onen= $$
$$
=b^-\left[s_m-s_{m-1,1}+s_{m-2,1^2}-\dots +(-1)^{m-1}s_{1^m}\right]\onen = b^-(p_m)\onen.
$$
\end{proof}

\subsection{Main result}

\begin{proof}[Proof of Theorem \ref{qqq}]
We need to show that $\Tr\U^*$, as a linear category, is isomorphic
to the idempotented integral form $\dUZL$. Because of Proposition \ref{r1} it is enough to prove that $\Tr\dotU^*$ is isomorphic
to $\dUZL$.

Let us define the functor $\mathcal F:\dUL \rightarrow \Tr\dotU^*\otimes\modQ$
to be identity on the objects and to send $E_i 1_n$ and $F_i 1_n$ to $\rE_i \onen$ and $\rF_i \onen$, respectively.
Moreover, let
$$H_i1_n\mapsto \left\{\begin{array}{ll}  n\onen & i=0,\\
b^-(p_i)\onen& i>0.\end{array}\right.$$

Let us check the relations on target, i.e. we need to show that
the following relations hold true:
\begin{align*}
[b^-(p_i),b^-(p_j)]&=0\\
[\rE_i \onen,\rE_j \onen]&=[\rF_i \onen,\rF_j \onen]=0\\
[b^-(p_i),\rE_j \onen]&=2\rE_{i+j}\\
 [b^-(p_i),\rF_j \onen]&=-2\rF_{i+j}\\
[\rE_i, \rF_j]&=\left\{ \begin{array}{ll}
n\onen & \text{if}\quad i+j=0,\\ b^-(p_{i+j}\onen) &\text{otherwise}.
\end{array}
\right.
\end{align*}

The first relation is obvious, the second follows from Proposition
\ref{zzz}. The next one follows from Lemma \ref{ap2}.  
The last relation is shown in Lemma \ref{r7}.  
So, the functor $\mathcal F$ is well defined.

It is clear that $\mathcal F$ sends
$$
E^{(a)}_j \mapsto \rE^{(a)}_{j^a}, \quad F^{(a)}_j \mapsto \rF^{(a)}_{j^a}, \quad \phi(s_\tau)\mapsto b^+(\tau).
$$

Proposition \ref{triangular} and Corollary~\ref{cor-traceUp} give
the basis of $\Tr\dotU^*(n,m)$ for $n+m\geq 0$:
\begin{gather*}
  \rF_{i_1^{a_1}}^{(a_1)}\dots \rF_{i_r^{a_r}}^{(a_r)}b^+(\tau)\rE_{k_1^{c_1}}^{(c_1)}\dots \rE_{k_t^{c_t}}^{(c_t)}\onen,
\end{gather*}
  \begin{gather*}
    i_1>\dots >i_r\ge 0,\quad  r\ge 0,\quad  a_1,\ldots ,a_r\ge 1,\\
    \tau\in P,\\
    k_1>\dots >k_t\ge 0,\quad  t\ge 0,\quad  c_1,\ldots ,c_t\ge 1,\\
    c_1+\dots +c_t-(a_1+\dots +a_r)=2(n-m),
  \end{gather*}
and similar for $m+n\leq 0$.

We see that $\mathcal F$ sends basis
elements \eqref{e3} of $\dUZL(n,m)$  to the
basis elements of $\Tr\dotU^*(n,m)$, both for $n+m\geq 0$ and $n+m\leq 0$.
So it is an isomorphism.
Therefore, the restriction $\mathcal F|_{\dUZL}:
 \dUZL\rightarrow \Tr\dotU^*$ is also an isomorphism.
\end{proof}

By using Theorem \ref{qqq},
 we can  get rid of two cases in triangular decomposition \ref{triangular} and have the following corollary.

\begin{cor}[Triangular Decomposition]
For every $n,m\in\modZ$, $\Tr\dotU^*(n,m)$ is free with basis
 $$\rF^{(b)}_{\mu}b^+(\tau)\rE^{(a)}_{\lambda}\onen
\quad {\text {for}} \quad  n\in\Z, \quad a,b\ge0, \quad 2(a-b)=m-n,
\quad \lambda\in P(a), \quad\mu\in P(b), \quad\tau\in P.$$
\end{cor}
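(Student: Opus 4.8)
The plan is to deduce the corollary from Theorem \ref{qqq} together with Garland's PBW theorem, thereby replacing the two cases of Proposition \ref{triangular} by a single basis valid for all $n,m$. By Proposition \ref{r1} it suffices to work with $\Tr\dotU^*$ in place of $\Tr\U^*$, and by Theorem \ref{qqq} there is an isomorphism of linear categories $\mathcal F\col\dUZL\ct\Tr\dotU^*$ which, as recorded in the proof of that theorem, sends $F_j^{(a)}1_n\mapsto\rF^{(a)}_{j^a}\onen$, $\phi(s_\tau)1_n\mapsto b^+(\tau)\onen$ and $E_j^{(a)}1_n\mapsto\rE^{(a)}_{j^a}\onen$. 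So it is enough to produce, for every $n,m\in\modZ$, a $\modZ$-basis of $\dUZL(n,m)$ whose image under $\mathcal F$ differs by a unitriangular change of basis from the proposed family $\rF^{(b)}_\mu b^+(\tau)\rE^{(a)}_\lambda\onen$.

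First I would note that $\dUZL(n,m)$ carries, for \emph{every} pair $n,m$, the ``$F$-before-$E$'' PBW basis \eqref{e3}, namely the elements $1_mF_{i_1}^{(a_1)}\dots F_{i_r}^{(a_r)}\phi(s_\tau)E_{k_1}^{(c_1)}\dots E_{k_t}^{(c_t)}1_n$ subject to the conditions of \eqref{e3}. Although in Section \ref{sec:linear-category-modz} this basis was invoked only when $n+m\ge0$ (the automorphism $\Phi$ being used otherwise), it is in fact valid with no restriction on $n,m$: it is obtained by idempotenting Garland's uniform basis of $\UZL$ (Proposition \ref{r15}, in the reorganised form of Corollary \ref{r15_a}), and the passage from $\UZL$ to $1_m(\UZL)1_n$ only collapses the degree-zero factor $H_{0,b_0}$, which acts on the relevant weight space $n'$ by the scalar $\binom{n'}{b_0}$; consequently the terms with $b_0>0$ become redundant and no new relations are created, uniformly in $n,m$. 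Transporting along $\mathcal F$, the composites $\rF_{i_1^{a_1}}^{(a_1)}\dots\rF_{i_r^{a_r}}^{(a_r)}\,b^+(\tau)\,\rE_{k_1^{c_1}}^{(c_1)}\dots\rE_{k_t^{c_t}}^{(c_t)}\onen$ then form a $\modZ$-basis of $\Tr\dotU^*(n,m)$ for all $n,m$.

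It remains to convert these composites into the family in the statement. Corollary \ref{cor-traceUp} (together with Lemma \ref{ja}) provides a unitriangular change of basis, with respect to the lexicographic order on partitions, between the composites $\rE_{l_1^{a_1}}^{(a_1)}\dots\rE_{l_p^{a_p}}^{(a_p)}\onen$ of \eqref{e13} and the family $\{\rE^{(a)}_\lambda\onen\mid\lambda\in P(a)\}$ of Proposition \ref{aaaa} inside $(\Tr\dotU^*)^+(n,n+2a)$; by the $\E\leftrightarrow\F$ symmetry of $\dotU^*$ the analogous statement holds for $\rF$-composites and $\{\rF^{(b)}_\mu\onen\mid\mu\in P(b)\}$. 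Since in each basis element produced in the previous paragraph the $F$-block, the factor $b^+(\tau)$ and the $E$-block occupy fixed positions, these two conversions combine into a single unitriangular change of basis on the set of triples $(\mu,\tau,\lambda)$ with $\mu\in P(b)$, $\tau\in P$, $\lambda\in P(a)$ and $2(a-b)=m-n$, carrying the basis above to $\{\rF^{(b)}_\mu b^+(\tau)\rE^{(a)}_\lambda\onen\}$. Hence the latter is a $\modZ$-basis of $\Tr\dotU^*(n,m)$, and via Proposition \ref{r1} of $\Tr\U^*(n,m)$, which is the assertion of the corollary.

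The one genuinely delicate point is the uniform-in-$(n,m)$ claim of the second paragraph: one must check with care that idempotenting Garland's basis — in particular the disposal of the $H_{0,b_0}$ factors — indeed yields a $\modZ$-basis of $\dUZL(n,m)$ also when $n+m<0$, i.e.\ that the ``$F$-before-$E$'' ordering survives idempotenting in every weight. Everything else is routine: the values of $\mathcal F$ on the generators are read off from the proof of Theorem \ref{qqq}, and the triangularity used in the third paragraph is an immediate consequence of Lemma \ref{ja} and Corollary \ref{cor-traceUp}.
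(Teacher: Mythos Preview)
Your proposal is correct and follows exactly the approach the paper indicates: the paper gives no proof beyond the remark that Theorem~\ref{qqq} removes the sign restriction on $n+m$ in Proposition~\ref{triangular}, and you have spelled out precisely how this works via transporting the Garland basis \eqref{e3} through $\mathcal F$ and then applying the unitriangular change of basis from Lemma~\ref{ja}/Corollary~\ref{cor-traceUp} (and its $\rF$ analogue). Your one flagged ``delicate point'' --- that \eqref{e3} is a $\modZ$-basis of $\dUZL(m,n)$ for \emph{all} $m,n$, not only $n+m\ge0$ --- is in fact asserted outright in Section~\ref{sec:linear-category-modz} immediately after the definition of $\dUZL$ (the later remark about $\Phi$ concerns only which basis is convenient to match against Proposition~\ref{triangular}, not which bases exist), so you may simply cite it rather than rederive it.
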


\addcontentsline{toc}{section}{References}



%

\end{document}